\newtheorem{theorem}{Theorem}[section]
\newtheorem{proposition}[theorem]{Proposition}
\newtheorem{corollary}[theorem]{Corollary}
\newtheorem{definition}[theorem]{Definition}
\newtheorem{example}[theorem]{Example}
\newtheorem{remark}[theorem]{Remark}
\newcommand{\C}{\mathcal{C}}
\newcommand{\Mod}{\mathrm{Mod}}
\begin{document}
\title{Gabriel Localization in functor categories}

\author[M. Ortiz., M. L. S. Sandoval-Miranda, V. Santiago]{Mart\'in Ortiz-Morales,\\ Martha Lizbeth Shaid Sandoval-Miranda,\\ Valente Santiago-Vargas}

\thanks{The authors thanks project: Apoyo a la Incorporaci\'on de Nuevos PTC-2019 PRODEP; Grant ID: F-PROMEP-39/Rev-04
   SEP-23-005 }
\date{\today}
\subjclass{2000]{Primary 18A25, 18E05; Secondary 16D90,16G10}}
\keywords{Functor categories, Gabriel Filter, Giraud subcategory, Localization}
\dedicatory{}
\maketitle

\begin{abstract}
P.  Gabriel showed that for a  unital ring $R$, there exists a bijective correspondece between the set of Gabriel filters of $R$ and the set of Giraud subcategories of $\mathrm{Mod}(R)$ (see \cite[Lemme 1]{Gabriel1} on page 412). In this paper we prove an analogous of Gabriel's result: for an small preadditive category $\mathcal{C}$, there exists a bijective correspondence between the Gabriel filters of $\mathcal{C}$ and Giraud subcategories of $\mathrm{Mod}(\mathcal{C})$.
\end{abstract}

\section{Introduction}
The idea that preadditive categories are rings with several objects was developed convincingly by Barry Mitchell (see \cite{Mitchelring})  who showed that a substantial amount of noncommutative ring theory is still true in this generality. Here we would like to emphazise that sometimes  clarity in concepts, statements, and proofs are  gained by dealing with additive categories, and that familiar theorems for rings come out of the natural development of category theory. For instance, the notions of radical of a preadditive category,  perfect  and semisimple rings, global dimensions, among other topics,  have been amply  studied in the context of rings with several objects.\\

In 1962, P. Gabriel introduced in \cite{Gabriel1} the concept of localization in the setting of abelian categories, and he proposed the now so named Gabriel filter on $R$, where $R$ denotes a unital ring and $\mathrm{Mod}(R)$ the category of its left unital $R$-modules, in order to study localization in rings and modules, (see  also \cite{Toma} and \cite[Chapter VI.5]{Stentrom}). Moreover, P. Gabriel showed that there is a bijective correspondence between the set of Gabriel filters of $R$ and the set of class of isomorphisms of Giraud subcategories of $\mathrm{Mod}(R)$ (see \cite{Gabriel1}). Recall that a subcategory $\mathcal{X}$ of $\mathrm{Mod}(R)$ is a Giraud subcategory if the inclusion functor has a left adjoint which is left exact. \\

Through the years, Gabriel filters have been studied by several authors in different contexts. For instance, in \cite{Lidia-TTF}, L. Angeleri H\"ugel and S. Bazzoni studied  Gabriel filters in Grothendieck categories with a generator. Notice that their definition is a little bit different from the one we use through out this paper.\\

In 2015, S. D\'iaz-Alvarado and M. Ort\'iz Morales   introduced in \cite{OrtizDiaz} the notion of Gabriel filter for a preadditive category $\mathcal{C}$ and they proved that there is a bijective correspondence between Gabriel filters of $\mathcal{C}$ and torsion hereditary classes of $\mathrm{Mod}(\mathcal{C})$. Recently, in \cite{Manolo}, C. Parra, M. Saorin and S. Virili have studied  torsion pairs in categories of modules over a preadditive category, abelian recollements by functor categories, and centrally splitting TTFs. 

In their investigation, these authors have given similar definitions and results related to S. D\'iaz-Alvarado and M. Ort\'iz Morales  work.\\

Following Mitchel's philosophy and the definition of Gabriel filter given in the paper \cite{OrtizDiaz}, our  aim in this paper is to study the analogous of Gabriel result  for the contexts of ring with several objects. One of the main results in this work is:\\

{\bfseries Theorem \ref{Biyeccionbuena}} {\it There exists a bijective correspondence between Gabriel filters on $\mathcal{C}$ and the class of isomorphisms of Giraud subcategories of $\mathrm{Mod}(\mathcal{C})$, where $\mathrm{Mod}(\mathcal{C})$ denotes the \textit{category of additive covariant functors} from $\mathcal{C}$ to  the category of abelian groups $ \mathbf{Ab}$.}\\

Our final result is related  with localization by Serre subcategories developed by P. Gabriel. The notion of quotient and localization of abelian categories by dense subcategories (i.e., Serre classes) was introduced by P. Gabriel in his famous  Doctoral thesis ``Des cat\'egories ab\'elienne'' \cite{Gabriel1}, and it plays an important role in ring theory. This notion achieves some goal as quotients in other area of mathematics.  In particular, in this paper we proved that there is an equivalence of categories $\mathrm{Mod}(\mathcal{C})/\mathcal{T}\simeq \mathrm{Mod}(\mathcal{C},\mathcal{F})$ where  $\mathrm{Mod}(\mathcal{C},\mathcal{F})$   is certain Giraud subcategory associated to a Gabriel filter $\mathcal{F}$, and $\mathcal{T}$ is a hereditary torsion class (see \ref{qutirn}).\\

It is worth to mention that tecniques of localization on lattical-contexts has been also studied, for instance, by T. Albu and P. F. Smith, see \cite{Albu_1996}, \cite{Albu_1997}, and\cite{Toma}, and Harold Simmons in a series of unpublished papers.
\\

This article is organized as follows. In Section \ref{sec2}, we recall  definitions needed in the work, and then we collect for later use a variety of results from various backgrounds. In Section \ref{sec3}, we define a prelocalization functor $\mathbb{L}$ and we prove several technical results related to the functor $\mathbb{L}$ that will be needed to define Gabriel localization on $\mathrm{Mod}(\mathcal{C})$. In Section \ref{sec4}, we define the Gabriel localization functor $\mathbb{G}$ and we proved our main result, theorem \ref{Biyeccionbuena}. Finally, in \ref{example} we give an example of a Gabriel filter in the category of representations of an infinite quiver.

\section{Preliminaries}\label{sec2}
We recall that a category $\C$ together with an abelian group structure on each of the sets of morphisms $\C(C_{1},C_{2})$ is called  \textbf{preadditive category}  provided all the composition maps
$\C(C,C')\times \C(C',C'')\longrightarrow \C(C,C'')$
in $ \C $ are bilinear maps of abelian groups. A covariant functor $ F:\C_{1}\longrightarrow \C_{2} $ between  preadditive categories $ \C_{1} $ and $ \C_{2} $ is said to be \textbf{additive} if for each pair of objects $ C $ and $ C' $ in $ \C_{1}$, the map $ F:\C_{1}(C,C')\longrightarrow \C_{2}(F(C),F(C')) $ is a morphism of abelian groups. Let $\mathcal C$ and $\mathcal D$  be preadditive categories and $\mathbf{Ab}$ the category of abelian groups.

\subsection{The category $\mathrm{Mod}(\mathcal{C})$}
Throughout this section $\mathcal{C}$ will be an arbitrary skeletally small preadditive category, and $\mathrm{Mod}(\mathcal{C})$ will denote the \textit{category of additive covariant functors} from $\mathcal{C}$ to  the category of abelian groups $ \mathbf{Ab}$, called the category of $\mathcal{C}$-modules. This category has as objects  the functors from $\mathcal C$ to $\mathbf{Ab}$, and  a morphism $ f:M_{1}\longrightarrow M_{2} $ of $ \C $-modules is a natural transformation,  that is, the set of morphisms $\mathrm{Hom}_\mathcal C(M_1,M_2)$ from $M_1$ to $M_2$  is given by $\mathrm{Nat} (M_{1}, M_{2} )$.  We sometimes we will write for short, $\mathcal{C}(-,?)$
instead of $\mathrm{Hom}_{\mathcal{C}}(-,?)$ and when it is clear from the context we will use just $(-,?).$
\\
	
We now recall some properties of the category $ \Mod(\C).$ The category $\Mod(\C) $ is a Grothendieck category with the following properties:
\begin{enumerate}
\item A sequence
\[
\begin{diagram}
\node{M_{1}}\arrow{e,t}{f}
 \node{M_{2}}\arrow{e,t}{g}
  \node{M_{3}}
\end{diagram}
\]
is exact in $ \Mod(\C) $ if and only if
\[
\begin{diagram}
\node{M_{1}(C)}\arrow{e,t}{f_{C}}
 \node{M_{2}(C)}\arrow{e,t}{g_{C}}
  \node{M_{3}(C)}
\end{diagram}
\]
is an exact sequence of abelian groups for each $ C$ in $\C $.

\item Let $ \lbrace M_{i}\rbrace_{i\in I} $ be a family of $ \C $-modules indexed by the set $ I $.
The $ \C $-module $ \underset{\i\in I}\amalg M_{i}$ defined by $  (\underset{i\in I}\amalg M_{i})\ (C)=\underset{i\in I}\amalg \ M_{i}(C)$ for all $ C $ in $ \C $, is a direct sum for the family $ \lbrace M_{i}\rbrace_{i\in I} $ in $ \Mod(\C) $, where $\underset{i\in I} \amalg M_{i}(C)  $ is the direct sum in $ \mathbf{Ab} $ of the family of abelian groups $ \lbrace M_{i}(C)\rbrace_{i\in I} $.  The $ \C $-module $ \underset{\i\in I}\prod M_{i}$ defined by $(\underset{i\in I}\prod M_{i})\ (C)=\underset{i\in I}\prod M_{i}(C)   $ for all $C$ in $\C$, is a product for the family $ \lbrace M_{i}\rbrace_{i\in I} $ in $\Mod(\C)$, where $ \underset{i\in I}\prod M_{i}(C)  $ is the product in $\mathbf{Ab}$.

\item  (Yoneda Lemma) For each $C$ in $\C $, the $\C$-module $(C,-)$ given by $(C,-)(X)=\C(C,X)$ for each $X$ in $\C$, has the property that for each $\C$-module $M$, the map $\left( (C,-),M\right)\longrightarrow M(C)$ given by $f\mapsto f_{C}(1_{C})$ for each $\C$-morphism $f:(C,-)\longrightarrow M$ is an isomorphism of abelian groups. We will often consider this isomorphism an identification.
Hence
\begin{enumerate}
\item The functor $ P:\C\longrightarrow \Mod(\C) $ given by $ P(C)=(C,-) $ is fully faithful.
\item For each family $\lbrace  C_{i}\rbrace _{i\in I}$ of objects in $ \C $, the $ \C $-module $ \underset{i\in I}\amalg P(C_{i}) $ is a projective $ \C $-module.
\item Given a $ \C $-module $ M $, there is a family $ \lbrace C_{i}\rbrace_{i\in I} $ of objects in $ \C $ such that there is an epimorphism $ \underset{i\in I}\amalg P(C_{i})\longrightarrow M\longrightarrow 0 $.
\end{enumerate}
\end{enumerate}

The reader can see \cite{MitBook} and \cite{AusM1} for more details of all these facts.

\subsection{Linear filters of $\mathcal{C}$}

In \cite{OrtizDiaz} were studied the notion of linear filters in preadditive categories, and there were given versions of classical definitions of (Gabriel) filters, torsion theories and annihilators of ideals in the category $\mathrm{Mod}(\mathcal{\C})$ of additive functors from $\mathcal{C}$ to $\bf{Ab}.$ 
Generalizations of classical results were obtained, such as the theorem explained by Gabriel that establishes a bijective correspondence between hereditary torsion theories and linear filters.

Next, we recall some basic notions introduced in \cite{OrtizDiaz}:

\begin{definition}
An additive subfunctor $I(C,-)$ of the functor $\mathrm{Hom}_{\mathcal{C}}(C,-)$  is called a $\textbf{left ideal}$ of $\mathrm{Hom}_{\mathcal{C}}(C,-)$.
\end{definition}
We will write sometimes $I$ instead of $I(C,-)$ when it is clear from the context  that $I$ is a subfunctor of $\mathrm{Hom}_{\mathcal{C}}(C,-)$.
With the above definition, we recall that a $\textbf{left ideal}$ $\textbf{in an additive category}$ $\mathcal{C}$ is a collection of left ideals 
$$\{I(C,-)\subseteq \mathrm{Hom}_{\mathcal{C}}(C,-)\mid C\in \mathcal{C}\}.$$
Similarly we can define a right ideal of $\mathcal{C}$. A $\textbf{two sided ideal of the category}$ $\mathcal{C}$  is an additive subfunctor of the two variable functor $\mathrm{Hom}_{\mathcal{C}}(-,?):\mathcal{C}^{op}\times \mathcal{C}\longrightarrow \mathbf{Ab}$.\\
\begin{definition}\label{anulador}
\begin{enumerate}
\item [(a)] Let $N\in \mathrm{Mod}(\mathcal{C})$ be and $K$ a submodule  of $N$.  Consider $C\in \mathcal{C}$ and $x\in N(C)$ we define the following $\mathcal{C}$-module
$$\big(K(-):x\big):\mathcal{C}\longrightarrow \mathbf{Ab},$$
as follows: for $C'\in \mathcal{C}$ we set
$$\big(K(-):x\big)(C'):=\{f\in\mathrm{Hom}_{\mathcal{C}}(C,C')\mid N(f)(x)\in K(C')\}.$$

\item [(b)] Let $0$ be the zero module in $\mathrm{Mod}(\mathcal{C})$, since $0$ is a submodule of $N$.  We define the ideal $\textbf{annihilator of}$ $\mathbf{x}\in N(C)$ denoted as $\mathrm{Ann}(x,-)$ as follows:
$$\mathrm{Ann}(x,-):=\big( 0(-):x\big).$$
That is, $\mathrm{Ann}(x,-)(C'):=\{f\in\mathrm{Hom}_{\mathcal{C}}(C,C')\mid N(f)(x)=0\}$ for $C'\in \mathcal{C}$. 
\end{enumerate}
\end{definition}
It is easy to see that $\big(K(-):x\big)$ is a left ideal of $\mathrm{Hom}_{\mathcal{C}}(C,-)$ and that for each $x\in N(C)$, we have the left ideal $\mathrm{Ann}(x,-)$ of $\mathrm{Hom}_{\mathcal{C}}(C,-)$.

\begin{remark}\label{idealpull}
\begin{enumerate}
\item [(a)]
Taking $N=\mathrm{Hom}_{\mathcal{C}}(C,-)$ for some $C\in \mathcal{C}$  and $K=I(C,-)$ a left ideal of $N$, for $h\in N(B)=\mathrm{Hom}_{\mathcal{C}}(C,B)$ we have the following $\mathcal{C}$-module
$$\big(I(C,-):h\big):\mathcal{C}\longrightarrow \mathbf{Ab},$$
defined as follows: for $C'\in \mathcal{C}$ we set
$$\big(I(C,-):h\big)(C'):=\{f\in\mathrm{Hom}_{\mathcal{C}}(B,C')\mid f\circ h\in I(C,C')\}.$$ Then 
$\big(I(C,-):h\big)$ is a left ideal of $\mathrm{Hom}_{\mathcal{C}}(B,-).$

\item [(b)] We have that $\big(I(C,-):h\big)$  is given by the following pullback in $\mathrm{Mod}(\mathcal{C})$
$$\xymatrix{\big(I(C,-):h\big)\ar[rr]^{\gamma_{I}^{C}}\ar[d]  & & I(C,-)\ar[d]^{\delta_{I}}\\
\mathrm{Hom}_{\mathcal{C}}(B,-)\ar[rr]^{\mathrm{Hom}_{\mathcal{C}}(h,-)} & & \mathrm{Hom}_{\mathcal{C}}(C,-)}$$
where $\gamma_{I}^{C}:=\mathrm{Hom}_{\mathcal{C}}(h,-)|_{(I(C,-):h)}$ and  $\delta_{I}$ is the inclusion of $I(C,-)$ into $\mathrm{Hom}_{\mathcal{C}}(C,-)$. That is, if $\alpha:=\mathrm{Hom}_{\mathcal{C}}(h,-)$ we have that 
$$\big(I(C,-):h\big)=\alpha^{-1}(I(C,-)).$$
Let us denote by $\delta_{\alpha^{-1}(I)}:\alpha^{-1}(I(C,-))\longrightarrow \mathrm{Hom}_{\mathcal{C}}(B,-)$ the canonical inclusion.
\end{enumerate}
\end{remark}

\begin{definition}\cite[Definition 2.2]{OrtizDiaz}\label{filterdef}
Let $\mathcal{F}_{C}$ be a family of left ideals of $\mathrm{Hom}_{\mathcal{C}}(C,-)$. It is said that $\mathcal{F}_{C}$ is a $\textbf{left filter}$ of $\mathrm{Hom}_{\mathcal{C}}(C,-)$ if the following conditions hold:
\begin{enumerate}
\item [($T_{1}$)] If $I\in \mathcal{F}_{C}$ and $I\subseteq J$ then $J\in \mathcal{F}_{C}$,

\item [($T_{2}$)] If $I,J\in \mathcal{F}_{C}$ then $I\cap J\in \mathcal{F}_{C}$.
\end{enumerate}
A collection $\mathcal{F}:=\{\mathcal{F}_{C}\}_{C\in \mathcal{C}}$ is a $\textbf{left linear filter for the category}$ $\mathcal{C}$ if $\mathcal{F}_{C}$ is a filter for $\mathrm{Hom}_{\mathcal{C}}(C,-)$ for all $C\in \mathcal{C}$ and

\begin{enumerate}
\item [($T_{3}$)] If $I\in \mathcal{F}_{C}$ and $h:C\longrightarrow B$ is a morphism in $\mathcal{C}$ then $\big(I(C,-):h\big)\in \mathcal{F}_{B}$ (see \ref{idealpull}(a)).
\end{enumerate}
A collection $\mathcal{F}=\{\mathcal{F}_{C}\}_{C\in \mathcal{C}}$ is a $\textbf{left Gabriel filter for the category}$ $\mathcal{C}$ if $\mathcal{F}=\{\mathcal{F}_{C}\}_{C\in \mathcal{C}}$ is a linear filter and the following holds:
\begin{enumerate}
\item [($T_{4}$)]  Let $J(C,-) \in \mathcal{F}_{C}$ be and $I(C,-)$ an ideal satisfiyng that for each $B\in \mathcal{C}$ the ideal $\big(I(C,-):h)$ belongs to $\mathcal{F}_{B}$ for all $h\in J(C,B)\subseteq \mathrm{Hom}_{\mathcal{C}}(C,B)$, then $I(C,-)\in \mathcal{F}_{C}$.
\end{enumerate}
\end{definition}

\begin{definition}\cite{OrtizDiaz} \label{filterpretorclas}
\begin{enumerate}
\item [(a)] 
Let $\mathcal{F}=\{\mathcal{F}_{C}\}_{C\in \mathcal{C}}$ be a linear filter in $\mathcal{C}$. We define
$$\mathcal{T}_{\mathcal{F}}:=\left\{M\in \mathrm{Mod}(\mathcal{C})\mid  \text{for each}\,\, C\in \mathcal{C},\,\,\, \mathrm{Ann}(x,-)\in \mathcal{F}_{C}\,\,\forall x\in M(C) \right\}.$$

\item [(b)] Let $\mathcal{T}$ be a hereditary pretorsion class in $\mathrm{Mod}(\mathcal{C})$. We define $\mathcal{F}_{\mathcal{T}}:=\{\mathcal{F}_{C}\}_{C\in \mathcal{C}}$ where 
$$\mathcal{F}_{C}:=\left\{I\subseteq \mathrm{Hom}_{\mathcal{C}}(C,-)\mid \frac{\mathrm{Hom}_{\mathcal{C}}(C,-)}{I}\in \mathcal{T}\right\}.$$
\end{enumerate}
\end{definition}

\begin{definition}\label{Ftorsion}
Let $\mathcal{F}=\{\mathcal{F}_{C}\}_{C\in \mathcal{C}}$ be a left filter in $\mathcal{C}$. We say that $M\in \mathrm{Mod}(\mathcal{C})$ is an $\mathcal{F}$-$\textbf{torsion}$ $\textbf{module}$ if $M\in \mathcal{T}_{\mathcal{F}}$.
\end{definition}

We recall that a class $\mathcal{A}\subseteq \mathrm{Mod}(\mathcal{C})$ is a pretorsion class if it is closed under quotient objects and coproducts. A pretorsion class is called hereditary if it is closed under subobjects. A  hereditary torsion class is a hereditary pretorsion class which is closed under extensions. Then we have the following results.

\begin{theorem}\cite[Theorem 2.5]{OrtizDiaz}\label{biyefilterpretor}
The maps $\mathcal{F}\rightarrow \mathcal{T}_{\mathcal{F}}$, $\mathcal{T}\rightarrow \mathcal{F}_{\mathcal{T}}$ induce a bijection between hereditary pretorsion classes of $\mathrm{Mod}(\mathcal{C})$ and left linear filters on $\mathcal{C}$.
\end{theorem}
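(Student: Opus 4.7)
The plan is to verify both directions of the correspondence $\mathcal{F}\mapsto \mathcal{T}_{\mathcal{F}}$ and $\mathcal{T}\mapsto\mathcal{F}_{\mathcal{T}}$ send the right kind of object to the right kind of object, and that they are mutually inverse. The key tool throughout is the Yoneda identification: every $x\in M(C)$ corresponds to a unique map $\hat{x}\colon(C,-)\to M$ with $\hat{x}_C(1_C)=x$, and the image of $\hat{x}$ is isomorphic to $(C,-)/\mathrm{Ann}(x,-)$.

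First I would show $\mathcal{T}_{\mathcal{F}}$ is a hereditary pretorsion class whenever $\mathcal{F}$ is a linear filter. Heredity is immediate because $\mathrm{Ann}_N(x,-)=\mathrm{Ann}_M(x,-)$ for $x$ in a submodule $N\subseteq M$. Closure under quotients uses that epimorphisms in $\mathrm{Mod}(\mathcal{C})$ are pointwise epis, so one lifts $\bar{x}$ to $x$ and observes $\mathrm{Ann}(x,-)\subseteq\mathrm{Ann}(\bar{x},-)$, then applies $(T_1)$. For coproducts, any $x\in(\bigoplus M_i)(C)$ has finite support $S$, and $\bigcap_{i\in S}\mathrm{Ann}(x_i,-)\subseteq\mathrm{Ann}(x,-)$, so $(T_2)$ and $(T_1)$ do the job. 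Symmetrically, to see that $\mathcal{F}_{\mathcal{T}}$ is a linear filter when $\mathcal{T}$ is a hereditary pretorsion class: $(T_1)$ follows because $(C,-)/J$ is a quotient of $(C,-)/I$ when $I\subseteq J$; $(T_2)$ follows because $(C,-)/(I\cap J)$ embeds in $(C,-)/I\oplus(C,-)/J$, and $\mathcal{T}$ is closed under subobjects and coproducts; for $(T_3)$ the pullback square of Remark \ref{idealpull}(b) produces a monomorphism $(B,-)/\bigl(I(C,-):h\bigr)\hookrightarrow(C,-)/I(C,-)$, and heredity of $\mathcal{T}$ then gives $(I(C,-):h)\in\mathcal{F}_B$.

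For the round-trip $\mathcal{T}_{\mathcal{F}_{\mathcal{T}}}=\mathcal{T}$: if $M\in\mathcal{T}$ and $x\in M(C)$, then $(C,-)/\mathrm{Ann}(x,-)$ embeds in $M$, hence lies in $\mathcal{T}$, so $\mathrm{Ann}(x,-)\in\mathcal{F}_{\mathcal{T},C}$, giving $M\in\mathcal{T}_{\mathcal{F}_{\mathcal{T}}}$. Conversely, if $M\in\mathcal{T}_{\mathcal{F}_{\mathcal{T}}}$, then the canonical epimorphism $\bigoplus_{(C,x)}(C,-)/\mathrm{Ann}(x,-)\twoheadrightarrow M$ exhibits $M$ as a quotient of an object in $\mathcal{T}$, so $M\in\mathcal{T}$. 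For $\mathcal{F}_{\mathcal{T}_{\mathcal{F}}}=\mathcal{F}$: the crucial identification, proved by chasing the Yoneda identification, is that for $\bar{h}\in\bigl((C,-)/I\bigr)(B)$ one has $\mathrm{Ann}(\bar{h},-)=\bigl(I(C,-):h\bigr)$. Applied with $B=C$ and $h=1_C$, this gives $I=\mathrm{Ann}(\overline{1_C},-)$, so $I\in\mathcal{F}_{\mathcal{T}_{\mathcal{F}},C}$ forces $I\in\mathcal{F}_C$; the reverse inclusion follows directly from $(T_3)$ applied to arbitrary $h\in\mathrm{Hom}_{\mathcal{C}}(C,B)$.

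The main obstacle I anticipate is bookkeeping rather than deep: one has to keep straight that $\mathrm{Ann}(x,-)$ and the quotient $(C,-)/\mathrm{Ann}(x,-)$ really do correspond to the cyclic submodule generated by $x$, and that the pullback description of $(I(C,-):h)$ gives the monomorphism $(B,-)/(I(C,-):h)\hookrightarrow(C,-)/I(C,-)$ needed for $(T_3)$. Once the Yoneda identification $\mathrm{Ann}(\bar{h},-)=(I(C,-):h)$ is made explicit, both the axiom $(T_3)$ and the identity $\mathcal{F}_{\mathcal{T}_{\mathcal{F}}}=\mathcal{F}$ reduce to formal verifications.
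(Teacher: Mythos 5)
Your proposal is correct, and since the paper only cites this statement from \cite{OrtizDiaz} without reproducing a proof, there is nothing in the present paper to compare it against; your argument is the standard one (the Gabriel correspondence for rings, transported via Yoneda to rings with several objects). All the checks go through: the annihilator of a section in a submodule or quotient behaves as you say, finite support of a coproduct element reduces closure under coproducts to $(T_1)$ and $(T_2)$, and the monomorphism $(B,-)/(I(C,-):h)\hookrightarrow(C,-)/I$ coming from the pullback square correctly establishes $(T_3)$. The two identifications you single out --- $\mathrm{Im}(\hat{x})\cong(C,-)/\mathrm{Ann}(x,-)$ and $\mathrm{Ann}(\bar{h},-)=(I(C,-):h)$ for $\bar{h}\in\big((C,-)/I\big)(B)$ --- are indeed what make both round trips formal, and your specialization to $h=1_C$ to obtain $\mathcal{F}_{\mathcal{T}_{\mathcal{F}}}\subseteq\mathcal{F}$ is exactly the right move.
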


\begin{theorem}\cite[Theorem 2.6]{OrtizDiaz}\label{biyeGafilterpretor} The maps $\mathcal{F}\rightarrow \mathcal{T}_{\mathcal{F}}$, $\mathcal{T}\rightarrow \mathcal{F}_{\mathcal{T}}$ induce a bijection 
$$\xymatrix{\{\text{Left Gabriel filters of}\,\,\mathcal{C}\}
\ar@<1ex>[d]^{\Psi}\\
\{\text{Hereditary torsion classes of}\,\,\mathrm{Mod}(\mathcal{C})\}
\ar@<1ex>[u]^{\Psi^{-1}}}$$
\end{theorem}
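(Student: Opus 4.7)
The plan is to leverage Theorem \ref{biyefilterpretor}, which already gives the correspondence between left linear filters and hereditary pretorsion classes via the same assignments $\Psi$ and $\Psi^{-1}$. It therefore suffices to show that under this correspondence, axiom $(T_{4})$ on $\mathcal{F}$ is equivalent to closure under extensions of the associated class $\mathcal{T}_{\mathcal{F}}$. Everything reduces to checking this single compatibility, and the main observation throughout will be the identity
\[
\mathrm{Ann}\bigl(M(h)(x),-\bigr) = \bigl(\mathrm{Ann}(x,-):h\bigr)
\]
for any $\mathcal{C}$-module $M$, any $x\in M(C)$, and any $h\in\mathrm{Hom}_{\mathcal{C}}(C,B)$, which is a direct unpacking of Definitions \ref{anulador} and \ref{filterdef}$(T_3)$.

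For the implication ``$(T_{4})\Rightarrow$ closure under extensions'', I start with a short exact sequence $0\to M'\to M\to M''\to 0$ with $M',M''\in\mathcal{T}_{\mathcal{F}}$ and pick $x\in M(C)$; let $\bar{x}\in M''(C)$ be its image and set $J:=\mathrm{Ann}(\bar{x},-)\in\mathcal{F}_{C}$. For any $h\in J(C,B)$, the element $M(h)(x)$ lies in $M'(B)\subseteq M(B)$, so $\mathrm{Ann}(M(h)(x),-)\in\mathcal{F}_{B}$ because $M'\in\mathcal{T}_{\mathcal{F}}$. By the displayed identity this is precisely $(\mathrm{Ann}(x,-):h)$, so axiom $(T_{4})$ yields $\mathrm{Ann}(x,-)\in\mathcal{F}_{C}$, and hence $M\in\mathcal{T}_{\mathcal{F}}$.

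For the converse, assume $\mathcal{T}_{\mathcal{F}}$ is closed under extensions and take $J\in\mathcal{F}_{C}$ together with an ideal $I$ such that $(I(C,-):h)\in\mathcal{F}_{B}$ for every $h\in J(C,B)$ and every $B$. Let $\pi:\mathrm{Hom}_{\mathcal{C}}(C,-)\to\mathrm{Hom}_{\mathcal{C}}(C,-)/I$ be the projection. Then $\pi(J)\cong (I+J)/I$ is a submodule of $\mathrm{Hom}_{\mathcal{C}}(C,-)/I$ whose elements at each level $B$ are the classes $\pi(h)$ with $h\in J(C,B)$, and the displayed identity gives $\mathrm{Ann}(\pi(h),-)=(I(C,-):h)\in\mathcal{F}_{B}$; hence $\pi(J)\in\mathcal{T}_{\mathcal{F}}$. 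The quotient $\bigl(\mathrm{Hom}_{\mathcal{C}}(C,-)/I\bigr)/\pi(J)\cong\mathrm{Hom}_{\mathcal{C}}(C,-)/(I+J)$ is a quotient of $\mathrm{Hom}_{\mathcal{C}}(C,-)/J\in\mathcal{T}_{\mathcal{F}}$, and therefore also lies in $\mathcal{T}_{\mathcal{F}}$. Applying closure under extensions to the short exact sequence
\[
0\to\pi(J)\to\mathrm{Hom}_{\mathcal{C}}(C,-)/I\to\mathrm{Hom}_{\mathcal{C}}(C,-)/(I+J)\to 0
\]
yields $\mathrm{Hom}_{\mathcal{C}}(C,-)/I\in\mathcal{T}_{\mathcal{F}}$, i.e.\ $I\in\mathcal{F}_{C}$, which is $(T_{4})$.

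The only subtle point, and the place I expect to double-check carefully, is the identification of $\mathrm{Ann}(\pi(h),-)$ with $(I(C,-):h)$ (and more generally the behaviour of annihilators under the quotient $\mathrm{Hom}_{\mathcal{C}}(C,-)\to\mathrm{Hom}_{\mathcal{C}}(C,-)/I$); everything else is a formal consequence of Theorem \ref{biyefilterpretor} together with the elementary fact that hereditary torsion classes are exactly the hereditary pretorsion classes closed under extensions.
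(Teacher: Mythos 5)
Your proposal is correct. The paper cites this result from \cite[Theorem 2.6]{OrtizDiaz} without reproducing an argument, so there is no in-paper proof to compare against; but your reduction — invoking Theorem \ref{biyefilterpretor} and then checking that axiom $(T_{4})$ on $\mathcal{F}$ corresponds to closure under extensions of $\mathcal{T}_{\mathcal{F}}$ — is exactly the standard way to upgrade the pretorsion/linear-filter bijection to the torsion/Gabriel-filter bijection, and is almost certainly what the cited source does. The two ingredients you single out do all the work: the identity $\mathrm{Ann}\bigl(M(h)(x),-\bigr)=\bigl(\mathrm{Ann}(x,-):h\bigr)$ (a direct check from Definitions \ref{anulador} and \ref{idealpull}, applied in the converse direction with $M=\mathrm{Hom}_{\mathcal{C}}(C,-)/I$ and $x=\pi(1_C)$, for which $\mathrm{Ann}(x,-)=I$ and $M(h)(x)=\pi(h)$), and the short exact sequence $0\to (I+J)/I\to\mathrm{Hom}_{\mathcal{C}}(C,-)/I\to\mathrm{Hom}_{\mathcal{C}}(C,-)/(I+J)\to 0$. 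One small point worth making explicit when you write this up: the fact that $J\in\mathcal{F}_C$ forces $\mathrm{Hom}_{\mathcal{C}}(C,-)/J\in\mathcal{T}_{\mathcal{F}}$ uses the equality $\mathcal{F}=\mathcal{F}_{\mathcal{T}_{\mathcal{F}}}$ from the bijection of Theorem \ref{biyefilterpretor}, not merely the definition of $\mathcal{T}_{\mathcal{F}}$.
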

We recall also that a preradical $t$ of $\mathrm{Mod}(\mathcal{C})$ is just a subfunctor of the identity functor $1_{\mathrm{Mod}(\mathcal{C})}:\mathrm{Mod}(\mathcal{C})\longrightarrow \mathrm{Mod}(\mathcal{C}).$ A preradical $t$ is called radical if $t\Big(\frac{M}{t(M)}\Big)=0$ for all  $M\in \mathrm{Mod}(\mathcal{C})$.\\
Let $\mathcal{T}$ be a pretorsion class in $\mathrm{Mod}(\mathcal{C})$. We can construct a preradical $t_{\mathcal{T}}$ associated to this pretorsion class as follows: For $M\in \mathrm{Mod}(\mathcal{C})$
$$t_{\mathcal{T}}(M)=\sum_{N\in \mathcal{T},N\subseteq M}N,$$
and for $f:M\longrightarrow N$ we have $t_{\mathcal{T}}(f):=f|_{t_{\mathcal{T}}(M)}$.\\
Conversely, let $t$ be a preradical in $\mathrm{Mod}(\mathcal{C})$. We construct the class $\mathcal{T}_{t}:=\{M\in\mathrm{Mod}(\mathcal{C})\mid t(M)=M\}$.
We have the following well known results.
\begin{proposition}\label{biyepretorprerad}
The maps $\mathcal{T}\rightarrow t_{\mathcal{T}}$ and $t\rightarrow \mathcal{T}_{t}$ give a bijective correspondence between left exact preradicals of $\mathrm{Mod}(\mathcal{C})$ and hereditary pretorsion classes of $\mathrm{Mod}(\mathcal{C})$
\end{proposition}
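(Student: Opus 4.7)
The plan is to verify that the assignments $\mathcal{T} \mapsto t_{\mathcal{T}}$ and $t \mapsto \mathcal{T}_{t}$ are well-defined, mutually inverse, and respect the desired classes. Since $\mathrm{Mod}(\mathcal{C})$ is a Grothendieck category, the standard arguments from module theory transfer, but I will organize them carefully.

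First, I would check that $t_{\mathcal{T}}$ is a left exact preradical whenever $\mathcal{T}$ is a hereditary pretorsion class. For functoriality, given a morphism $f\colon M\longrightarrow N$, I need $f(t_{\mathcal{T}}(M))\subseteq t_{\mathcal{T}}(N)$; this follows because $f(t_{\mathcal{T}}(M))$ is a quotient of $t_{\mathcal{T}}(M)$, and $t_{\mathcal{T}}(M)\in \mathcal{T}$ by closure under coproducts and quotients (write $t_{\mathcal{T}}(M)$ as a quotient of $\coprod_{N\subseteq M,\,N\in\mathcal{T}} N$). Thus $f(t_{\mathcal{T}}(M))\in\mathcal{T}$ sits inside $N$ and hence lies in $t_{\mathcal{T}}(N)$. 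So $t_{\mathcal{T}}$ is a subfunctor of the identity. For left exactness, given a monomorphism $N\hookrightarrow M$, I would show $t_{\mathcal{T}}(N)=N\cap t_{\mathcal{T}}(M)$: the inclusion $\subseteq$ is automatic from functoriality, and for $\supseteq$ the module $N\cap t_{\mathcal{T}}(M)$ is a subobject of $t_{\mathcal{T}}(M)\in\mathcal{T}$, hence in $\mathcal{T}$ by the hereditary hypothesis, and thus contained in $t_{\mathcal{T}}(N)$.

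Second, I would check that $\mathcal{T}_{t}$ is a hereditary pretorsion class whenever $t$ is a left exact preradical. Closure under quotients: if $M\in\mathcal{T}_t$ and $\pi\colon M\twoheadrightarrow Q$ is epic, then $\pi(t(M))\subseteq t(Q)$ by functoriality, and the left-hand side equals $\pi(M)=Q$, so $t(Q)=Q$. Closure under coproducts: for a family $\{M_i\}\subseteq \mathcal{T}_t$, the structural injections $\iota_i\colon M_i\longrightarrow \coprod M_i$ satisfy $\iota_i(M_i)=\iota_i(t(M_i))\subseteq t(\coprod M_i)$, whence $\coprod M_i\subseteq t(\coprod M_i)$. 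Closure under subobjects: if $N\subseteq M$ and $M\in\mathcal{T}_t$, then by left exactness $t(N)=N\cap t(M)=N\cap M=N$.

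Third, I would verify the assignments are mutually inverse. For $\mathcal{T}_{t_{\mathcal{T}}}=\mathcal{T}$: if $M\in\mathcal{T}$ then $M\subseteq t_{\mathcal{T}}(M)\subseteq M$, so $M\in\mathcal{T}_{t_{\mathcal{T}}}$; conversely $M\in\mathcal{T}_{t_{\mathcal{T}}}$ means $M=t_{\mathcal{T}}(M)\in\mathcal{T}$ by the first paragraph. For $t_{\mathcal{T}_t}=t$: the inclusion $t(M)\subseteq t_{\mathcal{T}_t}(M)$ holds because $t(M)\in\mathcal{T}_t$ (apply the radical-like identity $t(t(M))=t(M)$, which follows from left exactness applied to $t(M)\hookrightarrow M$). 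For the reverse, any $N\subseteq M$ with $N\in\mathcal{T}_t$ satisfies $N=t(N)=N\cap t(M)\subseteq t(M)$, so the sum $t_{\mathcal{T}_t}(M)$ is contained in $t(M)$.

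The main obstacle, and the place where left exactness is genuinely used, is establishing that $\mathcal{T}_t$ is closed under subobjects and that $t_{\mathcal{T}}(N) = N \cap t_{\mathcal{T}}(M)$. Everything else is formal and works for any pretorsion class or preradical. Once these two steps are in place, the bijection drops out immediately from the definitions.
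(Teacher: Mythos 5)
Your proof is correct and is precisely the standard argument; the paper itself gives no proof, deferring instead to the citation \cite[Corollary 1.8, p.~138]{Stentrom}, and your write-up is essentially the content of that reference spelled out in full. In particular, you correctly isolate the two places where left exactness (equivalently, the identity $t(N)=N\cap t(M)$ for $N\subseteq M$) is genuinely needed — closure of $\mathcal{T}_t$ under subobjects and the inclusion $t_{\mathcal{T}}(N)\supseteq N\cap t_{\mathcal{T}}(M)$ — and the rest is formal.
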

\begin{proof}
See \cite[Corollary 1.8]{Stentrom} on page 138.
\end{proof}

\begin{proposition}\label{biyepretorprerad}
The maps $\mathcal{T}\rightarrow t_{\mathcal{T}}$ and $t\rightarrow \mathcal{T}_{t}$ give a bijective correspondence
$$\xymatrix{\{\text{Hereditary torsion classes of}\,\,\mathrm{Mod}(\mathcal{C})\}
\ar@<1ex>[d]^{\Theta}\\
\{\text{Left exact radicals of}\,\,\mathrm{Mod}(\mathcal{C})\}
\ar@<1ex>[u]^{\Theta^{-1}}}$$
\end{proposition}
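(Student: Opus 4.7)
The plan is to leverage the previous proposition (which we already have, via \cite{Stentrom}): there is a bijection between left exact preradicals and hereditary pretorsion classes of $\mathrm{Mod}(\mathcal{C})$, implemented by exactly the same assignments $\mathcal{T}\mapsto t_{\mathcal{T}}$ and $t\mapsto \mathcal{T}_t$. So the remaining task is to show that, under this bijection, the subclass of hereditary \emph{torsion} classes (i.e., those additionally closed under extensions) corresponds precisely to the subclass of left exact preradicals that are \emph{radicals} (i.e., satisfy $t(M/t(M))=0$ for every $M$). In other words, the maps restrict well, and I need to verify the two implications below.

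For the forward direction, assume $\mathcal{T}$ is a hereditary torsion class and let $t:=t_{\mathcal{T}}$. Fix $M\in\mathrm{Mod}(\mathcal{C})$ and let $N/t(M):=t(M/t(M))\subseteq M/t(M)$; by construction $N/t(M)\in\mathcal{T}$, and $t(M)\in\mathcal{T}$ too. The short exact sequence
\[
0\longrightarrow t(M)\longrightarrow N\longrightarrow N/t(M)\longrightarrow 0
\]
has both ends in $\mathcal{T}$, so closure under extensions yields $N\in\mathcal{T}$. By the maximality of $t(M)$ as a subobject of $M$ that lies in $\mathcal{T}$, we get $N\subseteq t(M)$, hence $N=t(M)$ and $t(M/t(M))=0$. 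Therefore $t$ is a radical.

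For the converse, assume $t$ is a left exact radical and set $\mathcal{T}:=\mathcal{T}_t$; by the previous proposition $\mathcal{T}$ is already a hereditary pretorsion class, so only closure under extensions remains. Take a short exact sequence $0\to A\to B\to C\to 0$ with $A,C\in\mathcal{T}$. Left exactness of $t$ applied to this sequence, together with $t(A)=A$, yields $A\subseteq t(B)$ (concretely, $t(A)=A\cap t(B)$ inside $B$). Consequently $B/t(B)$ is a quotient of $B/A\cong C$; since $\mathcal{T}$ is closed under quotients and $C\in\mathcal{T}$, we get $B/t(B)\in\mathcal{T}$, i.e.\ $t(B/t(B))=B/t(B)$. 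But the radical hypothesis forces $t(B/t(B))=0$, so $B/t(B)=0$ and $B=t(B)\in\mathcal{T}$.

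The two implications together show that the bijection from Proposition~\ref{biyepretorprerad} restricts to a bijection between hereditary torsion classes and left exact radicals, which is exactly the statement. The only step that requires any genuine care is the forward direction, and specifically the verification that $N$ in the argument really lies in $\mathcal{T}$: this depends crucially on closure under extensions, which is precisely the extra hypothesis distinguishing torsion classes from pretorsion classes, and is therefore the hinge of the proof. Everything else is bookkeeping that transfers from $\mathrm{Mod}(R)$ to $\mathrm{Mod}(\mathcal{C})$ unchanged, thanks to the Grothendieck category structure of $\mathrm{Mod}(\mathcal{C})$ recalled in Section~\ref{sec2}.
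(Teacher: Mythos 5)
Your proof is correct. The paper does not give its own argument here, delegating instead to \cite[Proposition 3.1, p.~141]{Stentrom}; what you have written out is precisely the standard argument found there, reducing to the preradical/pretorsion bijection and then checking that closure under extensions on the class side matches the radical condition $t(M/t(M))=0$ on the functor side. Both directions are handled correctly: in the forward direction you correctly use that $t_{\mathcal{T}}(M)$ is the largest subobject of $M$ lying in $\mathcal{T}$ (this relies on $\mathcal{T}$ being closed under coproducts and quotients, so the sum $\sum_{N\subseteq M,\, N\in\mathcal{T}}N$ again lies in $\mathcal{T}$), and in the converse you correctly use the characterization of left exactness of a preradical as $t(A)=A\cap t(B)$ for $A\subseteq B$. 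There is nothing to add; your proof fills in exactly the content the paper outsources to the reference.
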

\begin{proof}
See \cite[Proposition 3.1]{Stentrom} on page 141.
\end{proof}
Now, consider a left Gabriel filter $\mathcal{F}:=\{\mathcal{F}_{C}\}_{C\in \mathcal{C}}$ in $\mathcal{C}$. By \ref{biyeGafilterpretor} we have the hereditary torsion class
$$\mathcal{T}_{\mathcal{F}}:=\left\{M\in \mathrm{Mod}(\mathcal{C})\mid  \text{for each}\,\, C\in \mathcal{C},\,\,\, \mathrm{Ann}(x,-)\in \mathcal{F}_{C}\,\,\forall x\in M(C) \right\}.$$
By \ref{biyepretorprerad}, we have the corresponding left exact radical $t$, (we use $t$ instead of $t_{\mathcal{T}_{\mathcal{F}}}$ to avoid such a horrible notation) defined as:
$$t(M)=\sum_{N\in \mathcal{T}_{\mathcal{F}},N\subseteq M}N.$$

\begin{remark}\label{remtorsiorad}
Let $\mathcal{F}:=\{\mathcal{F}_{C}\}_{C\in \mathcal{C}}$ be a  left Gabriel filter in $\mathcal{C}$ and $t$ the radical associated to the filter $\mathcal{F}$ via the bijections \ref{biyeGafilterpretor} and \ref{biyepretorprerad}. Then we have that $M$ is an $\mathcal{F}$-torsion  module if and only if $t(M)=M$.
\end{remark}

\section{Prelocalization functor}\label{sec3}

Let  $\mathcal{F}:=\{\mathcal{F}_{C}\}_{C\in \mathcal{C}}$ be a left linear filter on $\mathcal{C}$ as defined in \ref{filterdef}.  Then, we have that for each $C\in \mathcal{C}$  the set of left ideals $\mathcal{F}_{C}$ is a directed set with  the order defined as follows:
$$J\leq I \Longleftrightarrow I\subseteq J. $$ 
We recall that  we will write $I$ instead of $I(C,-)$ when it is clear from the context  that $I$ is a subfunctor of $\mathrm{Hom}_{\mathcal{C}}(C,-)$. Now, let $I\subseteq J$ in $\mathcal{F}_{C}$ and let us denote by $\mu_{I,J}:I\longrightarrow J$ the  canonical inclusion.  For $M\in \mathrm{Mod}(\mathcal{C})$, we have a morphism of abelian groups
$$\lambda_{J,I}:=\mathrm{Hom}_{\mathrm{Mod}(\mathcal{C})}(\mu_{I,J},M):\mathrm{Hom}_{\mathrm{Mod}(\mathcal{C})}\big(J,M\big)\longrightarrow \mathrm{Hom}_{\mathrm{Mod}(\mathcal{C})}\big(I,M\big).$$ Then we have a directed system of abelian groups
$$\left\{\lambda_{J,I}:\mathrm{Hom}_{\mathrm{Mod}(\mathcal{C})}\big(J,M\big)\longrightarrow \mathrm{Hom}_{\mathrm{Mod}(\mathcal{C})}\big(I,M\big)\right\}_{J\leq I}.$$
Thus we can form the abelian group 
$$\varinjlim_{I\in \mathcal{F}_{C}}\mathrm{Hom}_{\mathrm{Mod}(\mathcal{C})}\big(I,M\big).$$ For each $I\in \mathcal{F}_{C}$  we will denote by
$$\varphi_{I,M}^{C}:\mathrm{Hom}_{\mathrm{Mod}(\mathcal{C})}(I,M)\longrightarrow \varinjlim_{I\in \mathcal{F}_{C}}\mathrm{Hom}_{\mathrm{Mod}(\mathcal{C})}\big(I,M\big)$$
the canonical morphisms into the direct limit.
Now we define a functor
$$\mathbb{L}(M):\mathcal{C}\longrightarrow \mathbf{Ab}$$
as follows:
\begin{enumerate}
\item [(i)] $\mathbb{L}(M)(C):=\varinjlim_{I\in \mathcal{F}_{C}}\mathrm{Hom}_{\mathrm{Mod}(\mathcal{C})}\big(I,M\big)$.

\item [(ii)] If $h:C\longrightarrow B$ is a morphism in $\mathcal{C}$ we need to construct
$$\mathbb{L}(M)(h):\mathbb{L}(M)(C)\longrightarrow \mathbb{L}(M)(B).$$
Indeed, we have $\alpha:=\mathrm{Hom}_{\mathcal{C}}(h,-):\mathrm{Hom}_{\mathcal{C}}(B,-)\longrightarrow \mathrm{Hom}_{\mathcal{C}}(C,-)$. By \ref{idealpull}(b) we have that 
$$(I(C,-):h):=\alpha^{-1}(I(C,-)),$$
and we also have the diagram
$$\xymatrix{\alpha^{-1}(I(C,-))\ar[rr]^{\gamma_{C}^{I}}\ar[d]  & & I(C,-)\ar[d]^{\delta_{I}}\\
\mathrm{Hom}_{\mathcal{C}}(B,-)\ar[rr]_{\mathrm{Hom}_{\mathcal{C}}(f,-)} && \mathrm{Hom}_{\mathcal{C}}(C,-)}$$
where $\gamma_{C}^{I}:=\mathrm{Hom}_{\mathcal{C}}(h,-)|_{(I(C,-):h)}$ and $\delta_{I}:I(C,-)\longrightarrow \mathrm{Hom}_{\mathcal{C}}(C,-)$ is the inclusion.\\
Using the universal property of the pullback for $I(C,-)\subseteq J(C,-)\in \mathcal{F}_{C}$  with inclusion $\mu_{I,J}:I(C,-)\longrightarrow J(C,-)$ we have the diagram
$$\xymatrix{\alpha^{-1}(I(C,-))\ar[rrr]^{\gamma_{C}^{I}}\ar[d]_{\mu_{\alpha^{-1}(I),\alpha^{-1}(J)}}  & & & I(C,-)\ar[d]^{\mu_{I,J}}\\
\alpha^{-1}(J(C,-))\ar[rrr]_{\gamma_{C}^{J}} & & & J(C,-)}$$
where $\mu_{\alpha^{-1}(I),\alpha^{-1}(J)}:\alpha^{-1}(I)\longrightarrow \alpha^{-1}(J)$ denotes the inclusion of $\alpha^{-1}(I)$ into $\alpha^{-1}(J)$. By applyin $\mathrm{Hom}_{\mathcal{C}}(-,M)$ to the previous diagram  we have
$$\xymatrix{\mathrm{Hom}_{\mathrm{Mod}(\mathcal{C})}\big(J,M\big)\ar[rrr]^{\mathrm{Hom}_{\mathrm{Mod}(\mathcal{C})}\big(\gamma_{J}^{C},M\big)}\ar[d]^{\lambda_{J,I}} & & &  \mathrm{Hom}_{\mathrm{Mod}(\mathcal{C})}\big(\alpha^{-1}(J),M\big)\ar[d]^{\lambda_{\alpha^{-1}(J),\alpha^{-1}(I)}}\\
\mathrm{Hom}_{\mathrm{Mod}(\mathcal{C})}\big(I,M\big)\ar[rrr]^{\mathrm{Hom}_{\mathrm{Mod}(\mathcal{C})}\big(\gamma_{I}^{C},M\big)} & &  &  \mathrm{Hom}_{\mathrm{Mod}(\mathcal{C})}\big(\alpha^{-1}(I),M\big).}$$
Now, lets consider the canonical morphisms into the direct limits 
$$\varphi_{I,M}^{C}:\mathrm{Hom}_{\mathrm{Mod}(\mathcal{C})}(I,M)\longrightarrow \varinjlim_{I\in \mathcal{F}_{C}}\mathrm{Hom}_{\mathrm{Mod}(\mathcal{C})}\big(I,M\big),$$ 
since $\varphi^{-1}(I)\in \mathcal{F}_{B}$ ($\mathcal{F}=\{\mathcal{F}_{C}\}_{C\in \mathcal{C}}$ satisfies $T3$) we also have  the following canonical morphisms
$$\varphi_{\alpha^{-1}(I),M}^{B}:\mathrm{Hom}_{\mathrm{Mod}(\mathcal{C})}(\alpha^{-1}(I),M)\longrightarrow \varinjlim_{I'\in \mathcal{F}_{B}}\mathrm{Hom}_{\mathrm{Mod}(\mathcal{C})}\big(I',M\big)$$ into the direct limit $\varinjlim_{I'\in \mathcal{F}_{B}}\mathrm{Hom}_{\mathrm{Mod}(\mathcal{C})}\big(I',M\big)$.\\
Then, there exists a unique morphism 
$$\mathbb{L}(M)(h):\mathbb{L}(M)(C)\longrightarrow \mathbb{L}(M)(B)$$ such that the following diagram commutes for all $I\in\mathcal{F}_{C}$
\begin{equation}
\xymatrix{\mathbb{L}(M)(C)\ar[rrr]^{\mathbb{L}(M)(h)} & & & \mathbb{L}(M)(B)\\
\mathrm{Hom}_{\mathrm{Mod}(\mathcal{C})}\big(I,M\big)\ar[rrr]^{\mathrm{Hom}_{\mathrm{Mod}(\mathcal{C})}\big(\gamma_{I}^{C},M\big)}\ar[u]^{\varphi_{I,M}^{C}} & & &  \mathrm{Hom}_{\mathrm{Mod}(\mathcal{C})}\big(\alpha^{-1}(I),M\big)\ar[u]_{\varphi_{\alpha^{-1}(I),M}^{B}}.}
\end{equation}
\end{enumerate}

\begin{definition}\label{funtorL}
We define the functor $\mathbb{L}:\mathrm{Mod}(\mathcal{C})\longrightarrow \mathrm{Mod}(\mathcal{C})$ as follows:
\begin{enumerate}
\item [(a)] For $M\in \mathrm{Mod}(\mathcal{C})$ we define
$\mathbb{L}(M)\in \mathrm{Mod}(\mathcal{C})$ as the functor defined above.

\item [(c)] Let $\eta:M\longrightarrow N$ a natural transformation, then we get  the following commutative diagram for all $J\leq I$ in $\mathcal{F}_{C}$
$$\xymatrix{\mathrm{Hom}_{\mathrm{Mod}(\mathcal{C})}\big(J,M\big)\ar[rr]^{\mathrm{Hom}_{\mathrm{Mod}(\mathcal{C})}\big(J,\eta\big)}\ar[d]_{\mathrm{Hom}_{\mathrm{Mod}(\mathcal{C})}\big(\mu_{I,J},M\big)}& &  \mathrm{Hom}_{\mathrm{Mod}(\mathcal{C})}\big(J,N\big)\ar[d]^{\mathrm{Hom}_{\mathrm{Mod}(\mathcal{C})}\big(\mu_{I,J},N\big)}\\
\mathrm{Hom}_{\mathrm{Mod}(\mathcal{C})}\big(I,M\big)\ar[rr]^{\mathrm{Hom}_{\mathrm{Mod}(\mathcal{C})}\big(I,\eta\big)} & &  \mathrm{Hom}_{\mathrm{Mod}(\mathcal{C})}\big(I,N\big).}$$
Then we have a unique morphism of abelian groups
$$\overline{\eta}_{C}:\varinjlim_{I\in \mathcal{F}_{C}}\mathrm{Hom}_{\mathrm{Mod}(\mathcal{C})}\big(I,M\big)\longrightarrow \varinjlim_{I\in \mathcal{F}_{C}}\mathrm{Hom}_{\mathrm{Mod}(\mathcal{C})}\big(I,N\big)$$ such that the following diagram commutes
\begin{equation}
\xymatrix{\varinjlim_{I\in \mathcal{F}_{C}}\mathrm{Hom}_{\mathrm{Mod}(\mathcal{C})}\big(I,M\big)\ar[rr]^{\overline{\eta}_{C}} & & \varinjlim_{I\in \mathcal{F}_{C}}\mathrm{Hom}_{\mathrm{Mod}(\mathcal{C})}\big(I,N\big)\\
\mathrm{Hom}_{\mathrm{Mod}(\mathcal{C})}\big(I,M\big)\ar[rr]^{\mathrm{Hom}_{\mathrm{Mod}(\mathcal{C})}(I,\eta)}\ar[u]^{\varphi_{I,M}^{C}} & & \mathrm{Hom}_{\mathrm{Mod}(\mathcal{C})}\big(I,N\big)\ar[u]^{\varphi_{I,N}^{C}}.}
\end{equation}
We define $\mathbb{L}(\eta):=\overline{\eta}$, where $\overline{\eta}:=\{\overline{\eta}_{C}\}_{C\in \mathcal{C}}$.
\end{enumerate}
\end{definition}

\begin{proposition}
The functor $\mathbb{L}:\mathrm{Mod}(\mathcal{C})\longrightarrow \mathrm{Mod}(\mathcal{C})$ is left exact.
\end{proposition}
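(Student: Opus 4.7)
The plan is to verify left exactness pointwise, using property (1) of $\mathrm{Mod}(\mathcal{C})$ stated in the preliminaries, which tells us that a sequence in $\mathrm{Mod}(\mathcal{C})$ is exact if and only if it is exact at every object $C\in\mathcal{C}$. Thus given a left exact sequence
\[
0\longrightarrow M_{1}\stackrel{\eta}{\longrightarrow} M_{2}\stackrel{\xi}{\longrightarrow} M_{3}
\]
in $\mathrm{Mod}(\mathcal{C})$, it suffices to fix $C\in\mathcal{C}$ and show that
\[
0\longrightarrow \mathbb{L}(M_{1})(C)\stackrel{\overline{\eta}_{C}}{\longrightarrow} \mathbb{L}(M_{2})(C)\stackrel{\overline{\xi}_{C}}{\longrightarrow} \mathbb{L}(M_{3})(C)
\]
is exact in $\mathbf{Ab}$.

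Next I would exploit the fact that $\mathbb{L}(-)(C)$ is built as a two-step construction: first apply $\mathrm{Hom}_{\mathrm{Mod}(\mathcal{C})}(I,-)$ for each $I\in\mathcal{F}_{C}$, and then take the colimit indexed by $\mathcal{F}_{C}$. For each fixed $I\in\mathcal{F}_{C}$ the contravariant Hom-functor $\mathrm{Hom}_{\mathrm{Mod}(\mathcal{C})}(I,-)$ is left exact, so we obtain a short left-exact sequence of abelian groups
\[
0\longrightarrow \mathrm{Hom}_{\mathrm{Mod}(\mathcal{C})}(I,M_{1})\longrightarrow \mathrm{Hom}_{\mathrm{Mod}(\mathcal{C})}(I,M_{2})\longrightarrow \mathrm{Hom}_{\mathrm{Mod}(\mathcal{C})}(I,M_{3}),
\]
and this is natural in $I$, giving a morphism of directed systems indexed by $(\mathcal{F}_{C},\leq)$ (the direction being provided by the order $J\leq I \Leftrightarrow I\subseteq J$ noted at the start of the section).

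Then I would invoke the classical fact that filtered (in particular, directed) colimits in $\mathbf{Ab}$ are exact; the directedness of $\mathcal{F}_{C}$ follows from axiom $(T_{2})$ of a linear filter. Passing to the colimit over $I\in\mathcal{F}_{C}$ yields exactness of
\[
0\longrightarrow \varinjlim_{I\in\mathcal{F}_{C}}\mathrm{Hom}_{\mathrm{Mod}(\mathcal{C})}(I,M_{1})\longrightarrow \varinjlim_{I\in\mathcal{F}_{C}}\mathrm{Hom}_{\mathrm{Mod}(\mathcal{C})}(I,M_{2})\longrightarrow \varinjlim_{I\in\mathcal{F}_{C}}\mathrm{Hom}_{\mathrm{Mod}(\mathcal{C})}(I,M_{3}),
\]
which by the very definition of $\mathbb{L}$ is the required left-exact sequence at $C$. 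The identification of the transition maps with $\overline{\eta}_{C}$ and $\overline{\xi}_{C}$ is forced by the commutative square displayed in Definition \ref{funtorL}(c), via the universal property of the direct limit.

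I do not anticipate any serious obstacle: the proof is essentially a composition of two well-known exactness facts (left exactness of $\mathrm{Hom}$ and exactness of filtered colimits in $\mathbf{Ab}$), combined with the pointwise criterion for exactness in $\mathrm{Mod}(\mathcal{C})$. The only thing that requires a brief check is that the colimit system over $\mathcal{F}_{C}$ really is filtered, but this is immediate from the existence of intersections $(T_{2})$, which give upper bounds in the order used.
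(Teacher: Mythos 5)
Your proposal is correct and is essentially an expanded version of the paper's own argument: the paper's proof consists of the single sentence that $\mathrm{Hom}_{\mathrm{Mod}(\mathcal{C})}(I,-)$ is left exact and $\varinjlim$ is exact because $\mathbf{Ab}$ is Grothendieck, which are precisely the two facts you combine (together with the routine pointwise-exactness criterion and the observation that $(T_2)$ makes $\mathcal{F}_C$ directed).
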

\begin{proof}
This follows from the fact that $\mathrm{Hom}_{\mathrm{Mod}(\mathcal{C})}(I,-)$ is left exact and  $\varinjlim$ is exact, because $\mathbf{Ab}$ is a Grothendieck category.
\end{proof}

Now,  let us consider $C\in \mathcal{C}$ and $I\subseteq J$ in $\mathcal{F}_{\mathcal{C}}$, we have the following commutative diagram
$$\xymatrix{\mathrm{Hom}_{\mathrm{Mod}(\mathcal{C})}\big(\mathrm{Hom}_{\mathcal{C}}(C,-),M\big)\ar[rr]^{\theta_{J,M}^{C}}\ar@{=}[d] & &  \mathrm{Hom}_{\mathrm{Mod}(\mathcal{C})}\big(J,M\big)\ar[d]\\
\mathrm{Hom}_{\mathrm{Mod}(\mathcal{C})}\big(\mathrm{Hom}_{\mathcal{C}}(C,-),M\big)\ar[rr]_{\theta_{I,M}^{C}} & &  \mathrm{Hom}_{\mathrm{Mod}(\mathcal{C})}\big(I,M\big)}$$
where $\theta_{J,M}^{C}:=\mathrm{Hom}_{\mathrm{Mod}(\mathcal{C})}(\delta_{J},M)$ and $\theta_{I,M}^{C}:=\mathrm{Hom}_{\mathrm{Mod}(\mathcal{C})}(\delta_{I},M)$.\\
So, it induces a morphism
$$[\psi_{M}]_{C}:\varinjlim_{I\in \mathcal{F}_{C}}\mathrm{Hom}_{\mathrm{Mod}(\mathcal{C})}\big(\mathrm{Hom}_{\mathcal{C}}(C,-),M\big)\longrightarrow \varinjlim_{I\in \mathcal{F}_{C}}\mathrm{Hom}_{\mathrm{Mod}(\mathcal{C})}\big(I,M\big).$$
(recall that $\varinjlim_{I\in \mathcal{F}_{C}}\mathrm{Hom}_{\mathrm{Mod}(\mathcal{C})}\big(\mathrm{Hom}_{\mathcal{C}}(C,-),M\big)=\mathrm{Hom}_{\mathrm{Mod}(\mathcal{C})}\big(\mathrm{Hom}_{\mathcal{C}}(C,-),M\big)$) 
such that the following diagram commutes for each $I\in \mathcal{F}_{C}$:
$$\xymatrix{\mathrm{Hom}_{\mathrm{Mod}(\mathcal{C})}\big(\mathrm{Hom}_{\mathcal{C}}(C,-),M\big)\ar[r]^{[\psi_{M}]_{C}}\ar@{=}[d] &  \varinjlim_{I\in \mathcal{F}_{C}}\mathrm{Hom}_{\mathrm{Mod}(\mathcal{C})}\big(I,M\big)\\
\mathrm{Hom}_{\mathrm{Mod}(\mathcal{C})}\big(\mathrm{Hom}_{\mathcal{C}}(C,-),M\big)\ar[r]_{\theta_{I,M}^{C}} &  \mathrm{Hom}_{\mathrm{Mod}(\mathcal{C})}\big(I,M\big)\ar[u]_{\varphi_{I,M}^{C}}}$$
Now, consider the Yoneda isomorphism $Y_{C}:M(C)\longrightarrow \mathrm{Hom}_{\mathrm{Mod}(\mathcal{C})}\big(\mathrm{Hom}_{\mathcal{C}}(C,-),M\big)$, then we have the following commutative diagram for each $I\in\mathcal{F}_{C}$
\begin{equation}
\xymatrix{M(C)\ar[r]^(.3){Y_{C}}\ar@{=}[d] & \mathrm{Hom}_{\mathrm{Mod}(\mathcal{C})}\big(\mathrm{Hom}_{\mathcal{C}}(C,-),M\big)\ar[r]^{[\psi_{M}]_{C}}\ar@{=}[d] &  \varinjlim_{I\in \mathcal{F}_{C}}\mathrm{Hom}_{\mathrm{Mod}(\mathcal{C})}\big(I,M\big)\\
M(C)\ar[r]^(.3){Y_{C}} & \mathrm{Hom}_{\mathrm{Mod}(\mathcal{C})}\big(\mathrm{Hom}_{\mathcal{C}}(C,-),M\big)\ar[r]_{\theta_{I,M}^{C}} &  \mathrm{Hom}_{\mathrm{Mod}(\mathcal{C})}\big(I,M\big)\ar[u]_{\varphi_{I,M}^{C}}}
\end{equation}

\begin{definition}\label{defimorfismovar}
Let $M\in \mathrm{Mod}(\mathcal{C})$  be. 
For each $C\in \mathcal{C}$ we define
$[\varphi_{M}]_{C}:M(C)\longrightarrow \mathbb{L}(M)(C)$ as follows:
$$[\varphi_{M}]_{C}:=[\psi_{M}]_{C}\circ Y_{C}$$
\end{definition}
Thus, we have the following result.

\begin{proposition}\label{morfiscan}
There exists a morphism in $\mathrm{Mod}(\mathcal{C})$
$$\varphi_{M}:M\longrightarrow \mathbb{L}(M)$$
defined as $[\varphi_{M}]_{C}:=[\psi_{M}]_{C}\circ Y_{C}$ for each $C\in \mathcal{C}$.
\end{proposition}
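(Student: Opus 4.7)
The plan is to verify that the family $\varphi_{M}=\{[\varphi_{M}]_{C}\}_{C\in\mathcal{C}}$ constitutes a natural transformation; since each component is already a homomorphism of abelian groups by its very definition, the only content is to check that for every morphism $h:C\longrightarrow B$ in $\mathcal{C}$ one has
$$\mathbb{L}(M)(h)\circ [\varphi_{M}]_{C}=[\varphi_{M}]_{B}\circ M(h).$$

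First I would unfold $[\varphi_{M}]_{C}=[\psi_{M}]_{C}\circ Y_{C}$ by means of diagram (3): for any $I\in\mathcal{F}_{C}$,
$$[\varphi_{M}]_{C}(x)=\varphi_{I,M}^{C}\bigl(\theta_{I,M}^{C}(Y_{C}(x))\bigr).$$
Setting $\alpha=\mathrm{Hom}_{\mathcal{C}}(h,-)$ and invoking the defining identity (1) of $\mathbb{L}(M)(h)$ yields
$$\mathbb{L}(M)(h)\bigl([\varphi_{M}]_{C}(x)\bigr)=\varphi_{\alpha^{-1}(I),M}^{B}\Bigl(\mathrm{Hom}_{\mathrm{Mod}(\mathcal{C})}(\gamma_{I}^{C},M)\bigl(\theta_{I,M}^{C}(Y_{C}(x))\bigr)\Bigr).$$
On the other hand, $\alpha^{-1}(I)\in\mathcal{F}_{B}$ by axiom $(T_{3})$, so instantiating diagram (3) at $B$ with the ideal $\alpha^{-1}(I)$ gives
$$[\varphi_{M}]_{B}\bigl(M(h)(x)\bigr)=\varphi_{\alpha^{-1}(I),M}^{B}\Bigl(\theta_{\alpha^{-1}(I),M}^{B}\bigl(Y_{B}(M(h)(x))\bigr)\Bigr).$$
It therefore suffices to establish the equality of natural transformations $\alpha^{-1}(I)\longrightarrow M$
$$\mathrm{Hom}_{\mathrm{Mod}(\mathcal{C})}(\gamma_{I}^{C},M)\circ \theta_{I,M}^{C}\circ Y_{C}=\theta_{\alpha^{-1}(I),M}^{B}\circ Y_{B}\circ M(h),$$
after which post-composition with the colimit structure map $\varphi_{\alpha^{-1}(I),M}^{B}$ closes the naturality square.

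This last identity is a direct Yoneda computation. Given $x\in M(C)$, evaluate both sides at $B'\in\mathcal{C}$ on an element $f\in\alpha^{-1}(I)(B')=\{f\in\mathrm{Hom}_{\mathcal{C}}(B,B')\mid f\circ h\in I(C,B')\}$. Since $\gamma_{I}^{C}(f)=f\circ h$, the left side unfolds to $Y_{C}(x)_{B'}(f\circ h)=M(f\circ h)(x)$, while the right side produces $Y_{B}(M(h)(x))_{B'}(f)=M(f)(M(h)(x))=M(f\circ h)(x)$ by functoriality of $M$; the two expressions coincide.

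The main obstacle is purely bookkeeping: the definition of $\mathbb{L}(M)(h)$ passes through the universal property of a direct limit whose transition maps are themselves obtained from pullback squares along $\alpha$, so one must carefully track how $\alpha^{-1}(I)$ interacts with the Yoneda isomorphism, the inclusions $\delta_{I}$, and the restriction maps $\theta_{I,M}^{C}$. Once the pre-colimit identity above is in place, naturality follows without further work and $\varphi_{M}:M\longrightarrow \mathbb{L}(M)$ is a well-defined morphism in $\mathrm{Mod}(\mathcal{C})$.
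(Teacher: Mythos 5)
Your proof is correct and follows essentially the same route as the paper: unfold $[\varphi_M]_C$ via diagram (3), push through diagram (1), and reduce naturality to an identity resolved by Yoneda. The only cosmetic difference is that the paper establishes the final equality by manipulating natural transformations symbolically via the pullback relation $\delta_I\circ\gamma_I^C=\mathrm{Hom}_\mathcal{C}(h,-)\circ\delta_{\alpha^{-1}(I)}$ and the identity $\eta_x\circ\mathrm{Hom}_\mathcal{C}(h,-)=\eta_{M(h)(x)}$, whereas you verify it by direct pointwise evaluation at an element $f\in\alpha^{-1}(I)(B')$; both computations boil down to $M(f\circ h)(x)$ on each side.
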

\begin{proof}
Let $h:C\longrightarrow B$ be a morphism in $\mathcal{C}$. We must check that the following equality holds $$\mathbb{L}(M)(h)\circ [\varphi_{M}]_{C}=[\varphi_{M}]_{B}\circ M(h).$$
Indeed, let $x\in M(C)$ be then we have that $Y_{C}(x):=\eta_{x}:\mathrm{Hom}_{\mathcal{C}}(C,-)\longrightarrow M$. By the commutativity of the diagram $(\ast)$ above, we get that 
$[\varphi_{M}]_{C}(x)=\varphi_{I,M}^{C}(\theta_{I,M}^{C}(\eta_{x}))\in \mathbb{L}(M)(C)$. Since $\theta_{I,M}^{C}(\eta_{x})\in \mathrm{Hom}_{\mathrm{Mod}(\mathcal{C}}(I,M)$, by the diagram (1) before the definition \ref{funtorL}, we get

\begin{align*}
\mathbb{L}(M)(h)\Big([\varphi_{M}]_{C}(x)\Big) & =\varphi_{\alpha^{-1}(I),M}^{B}\Big(\mathrm{Hom}_{\mathrm{Mod}(\mathcal{C})}(\gamma_{I}^{C},M)(\theta_{I,M}^{C}(\eta_{x}))\Big)\\
& =\varphi_{\alpha^{-1}(I),M}^{B}\Big((\theta_{I,M}^{C}(\eta_{x}))\circ \gamma_{I}^{C}\Big)\\
& =\varphi_{\alpha^{-1}(I),M}^{B}\Big((\eta_{x}\circ \delta_{I})\circ \gamma_{I}^{C}\Big),\,\,\,\,\,\,[\text{def. of}\,\,\, \theta_{I,M}^{C}]\\
& =\varphi_{\alpha^{-1}(I),M}^{B}\Big((\eta_{x}\circ \mathrm{Hom}_{\mathcal{C}}(h,-)\circ  \delta_{\alpha^{-1}(I)}\Big),\,\,\,\,[\text{remark}\,\,\,\ref{idealpull}(b) ]
\end{align*}
But, we have that $\eta_{x}\circ \mathrm{Hom}_{\mathcal{C}}(h,-):\mathrm{Hom}_{\mathcal{C}}(B,-)\longrightarrow M$ is such that
$$[\eta_{x}\circ \mathrm{Hom}_{\mathcal{C}}(h,-)]_{B}(1_{B})=[\eta_{x}]_{B}(h)=M(h)(x)\,\,\,\, [\text{def. of Yoneda iso}].$$
Thus, we have that $\eta_{x}\circ \mathrm{Hom}_{\mathcal{C}}(h,-)=\eta_{M(h)(x)}.$
Therefore, we conclude that
$$\mathbb{L}(M)(h)\Big([\varphi_{M}]_{C}(x)\Big) =\varphi_{\alpha^{-1}(I),M}^{B}\Big(\eta_{M(h)(x)}\circ \delta_{\varphi^{-1}(I)}\Big).$$
On the other hand, by the diagram $(\ast)$ above ($B$ instead of $C$ and $\alpha^{-1}(I)$ instead of $I$) and since $Y_{B}\Big(M(h)(x)\Big)=\eta_{M(h)(x)}:\mathrm{Hom}_{\mathcal{C}}(B,-)\longrightarrow M$ we get the equalities
\begin{align*}
[\varphi_{M}]_{B}\Big(M(h)(x)\Big) & =[\psi_{M}]_{B}\Big(Y_{B}\Big(M(h)(x)\Big)\Big)\,\,\,\,\, [\text{def. of}\,\,\,[\varphi_{M}]_{B}]\\
& =\varphi_{\alpha^{-1}(I),M}^{B}\Big(\theta_{\varphi^{-1}(I),M}^{B}\Big(Y_{B}\Big(M(h)(x)\Big)\Big)\Big)\\
& =\varphi_{\alpha^{-1}(I),M}^{B}\Big(\theta_{\varphi^{-1}(I),M}^{B}\Big(\eta_{M(h)(x)}\Big)\Big)\\
& =\varphi_{\alpha^{-1}(I),M}^{B}\Big(\eta_{M(h)(x)}\circ \delta_{\varphi^{-1}(I)}\Big)\,\,\,\,\, [\text{def. of}\,\,\,\theta_{\varphi^{-1}(I),M}^{B}]
\end {align*}
Proving the equality required. So $\varphi_{M}:M\longrightarrow \mathbb{L}(M)$ is a morphism in $\mathrm{Mod}(\mathcal{C})$.
\end{proof}

\begin{proposition}
Consider the functors $1_{\mathrm{Mod}(\mathcal{C})},\mathbb{L} :\mathrm{Mod}(\mathcal{C})\longrightarrow \mathrm{Mod}(\mathcal{C}).$ There is natural transformation
$$\varphi:1_{\mathrm{Mod}(\mathcal{C})}\longrightarrow \mathbb{L}$$
\end{proposition}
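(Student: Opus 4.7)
The plan is to verify that the family $\varphi := \{\varphi_M\}_{M \in \mathrm{Mod}(\mathcal{C})}$ already constructed in Proposition \ref{morfiscan} satisfies the naturality square. Concretely, fix a morphism $\eta \colon M \longrightarrow N$ in $\mathrm{Mod}(\mathcal{C})$; I need to show that
$$\mathbb{L}(\eta) \circ \varphi_M \;=\; \varphi_N \circ \eta,$$
and it suffices to check this componentwise, i.e., that $[\mathbb{L}(\eta)]_C \circ [\varphi_M]_C = [\varphi_N]_C \circ \eta_C$ for every $C \in \mathcal{C}$.

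First I would unfold the left-hand side using the defining equality $[\varphi_M]_C = [\psi_M]_C \circ Y_C$ from Definition \ref{defimorfismovar}, together with diagram $(\ast)$ written just before that definition. Given $x \in M(C)$, set $\eta_x := Y_C(x) \colon \mathrm{Hom}_\mathcal{C}(C,-) \to M$. Then for every $I \in \mathcal{F}_C$ one has
$$[\varphi_M]_C(x) \;=\; \varphi_{I,M}^{C}\bigl(\eta_x \circ \delta_I\bigr).$$
Applying $[\mathbb{L}(\eta)]_C := \overline{\eta}_C$ and using the commutative square (2) from Definition \ref{funtorL}, we get
$$[\mathbb{L}(\eta)]_C\bigl([\varphi_M]_C(x)\bigr) \;=\; \varphi_{I,N}^{C}\bigl(\eta \circ \eta_x \circ \delta_I\bigr).$$

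Next I would compute the right-hand side. By naturality of the Yoneda isomorphism we have $Y_C(\eta_C(x)) = \eta \circ \eta_x$, because both natural transformations $\mathrm{Hom}_\mathcal{C}(C,-) \to N$ send $1_C$ to $\eta_C(x)$. Hence, using diagram $(\ast)$ for $N$ in place of $M$,
$$[\varphi_N]_C(\eta_C(x)) \;=\; \varphi_{I,N}^{C}\bigl(\theta_{I,N}^{C}(\eta \circ \eta_x)\bigr) \;=\; \varphi_{I,N}^{C}\bigl((\eta \circ \eta_x) \circ \delta_I\bigr),$$
which matches the expression obtained above. This gives the desired equality for every $x$ and every $I \in \mathcal{F}_C$.

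The main obstacle is purely bookkeeping: keeping straight the three layers of identifications (Yoneda, the inclusion $\theta_{I,M}^C = \mathrm{Hom}(\delta_I, M)$, and the colimit structure maps $\varphi_{I,M}^C$) and checking that the well-definedness of $\overline{\eta}_C$ on the colimit is compatible with pre-composition by $\delta_I$ uniformly in $I$. Once the diagrams $(1)$, $(2)$ and $(\ast)$ are invoked in this order, naturality in $\eta$ is immediate because $\varinjlim_{I \in \mathcal{F}_C}$ is functorial in the second argument and the Yoneda isomorphism is natural in $M$; no further hypothesis on $\mathcal{F}$ (not even $T_3$ or $T_4$) is actually needed at this stage.
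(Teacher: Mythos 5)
Your proof is correct and takes essentially the same route as the paper's: unfold $[\varphi_M]_C$ via the Yoneda isomorphism and the colimit structure maps, push $\overline{\eta}_C$ past $\varphi_{I,M}^C$ using diagram (2), and close the square with the Yoneda naturality identity $Y_C(\eta_C(x)) = \eta\circ\eta_x$ (which the paper verifies by hand as $\gamma_y = \eta\circ\alpha_x$, while you cite it directly). The only genuine difference is presentational: you invoke Yoneda naturality as a known fact rather than recomputing it componentwise.
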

\begin{proof}

For $C\in \mathcal{C}$ we have to check that the following diagram commutes
$$\xymatrix{M(C)\ar[r]^{[\varphi_{M}]_{C}}\ar[d]^{\eta_{C}} & \mathbb{L}(M)(C)\ar[d]^{[\overline{\eta}]_{C}}\\
N(C)\ar[r]_{[\varphi_{N}]_{C}} & \mathbb{L}(N)(C).}$$
On one hand we have that
\begin{align*}
\overline{\eta}_{C}\circ [\varphi_{M}]_{C} & = \overline{\eta}_{C}\circ [\psi_{M}]_{C}\circ Y_{C}\\
&  =\overline{\eta}_{C}\circ  \varphi_{I,M}^{C}\circ  \theta_{I,M}^{C}\circ Y_{C}\quad\quad\quad\quad\quad \quad\,\,\,\, [\text{diagram}\,\, 3]\\
& =\varphi_{I,N}^{C}\circ \mathrm{Hom}_{\mathrm{Mod}(\mathcal{C})}(I,\eta)\circ \theta_{I,M}^{C}\circ Y_{C}\quad [\text{diagram}\,\, 2]\
\end{align*}
For $x\in M(C)$ we get $Y_{C}(x):=\alpha_{x}:\mathrm{Hom}_{\mathcal{C}}(C,-)\longrightarrow M$. Then $\theta_{I,M}^{C}(\alpha_{x})=\alpha_{x}\delta_{I}$ with $\delta_{I}:I\longrightarrow \mathrm{Hom}_{\mathcal{C}}(C,-)$ the inclusion. Then $(\overline{\eta}_{C}\circ [\varphi_{M}]_{C})(x)=\varphi_{I,N}^{C}\Big(\eta\circ \alpha_{x}\circ \delta_{I} \Big)$.\\
On the other hand, we have that 
\begin{align*}
[\varphi_{N}]_{C}\circ \eta_{C}= \varphi_{I,N}^{C}\circ \theta_{I,N}^{C}\circ Y_{C}\circ \eta_{C} \quad\quad [\text{Diagram}\,\,3\,\,\text{with N instead of M}].
\end{align*}
For $x\in M(C)$ we get $y:=\eta_{C}(x)\in N(C)$, then $Y_{C}(y)=\gamma_{y}:\mathrm{Hom}_{\mathcal{C}}(C,-)\longrightarrow N$. Then we get  $([\varphi_{N}]_{C}\circ \eta_{C})(x)=\varphi_{I,N}^{C}\Big(\gamma_{y}\circ \delta_{I}\Big)$.
We assert that $\gamma_{y}=\eta\circ \alpha_{x}$.\\ Indeed, we have that for $C'\in \mathcal{C}$
$[\eta\circ \alpha_{x}]_{C'}:\mathrm{Hom}_{\mathcal{C}}(C,C')\longrightarrow N(C')$ is defined as $[\eta\circ \alpha_{x}]_{C'}(f)=[\eta]_{C'}([\alpha_{x}]_{C'}(f))=\eta_{C'}\Big(M(f)(x)\Big)$.\\
On the other side, $[\gamma_{y}]_{C'}(f):=N(f)(y)=N(f)(\eta_{C}(x)).$
But, since $\eta:M\longrightarrow N$ is a natural transformation we have that $N(f)\circ \eta_{C}=\eta_{C'}\circ M(f)$. Then 
$[\gamma_{y}]_{C'}(f):=N(f)(y)=N(f)(\eta_{C}(x))=\eta_{C'}\Big(M(f)(x)\Big)=[\eta\circ \alpha_{x}]_{C'}(f)$. We conclude that $\gamma_{y}=\eta\circ \alpha_{x}$. Then 
$$(\overline{\eta}_{C}\circ [\varphi_{M}]_{C})(x)=\varphi_{I,N}^{C}\Big(\eta\circ \alpha_{x}\circ \delta_{I} \Big)=\varphi_{I,N}^{C}\Big(\gamma_{y}\circ \delta_{I}\Big)=([\varphi_{N}]_{C}\circ \eta_{C})(x)$$
Proving that the required diagram commutes. 
\end{proof}

\begin{proposition}\label{kervarphi}
Let $\mathcal{F}:=\{\mathcal{F}_{C}\}_{C\in \mathcal{C}}$ be a left Gabriel filter in $\mathrm{Mod}(\mathcal{C})$ and consider the morphism $\varphi_{M}:M\longrightarrow \mathbb{L}(M)$ in \ref{morfiscan}. The
$\mathrm{Ker}(\varphi_{M})=t(M)$ where $t$ is de radical associated to the filter $\mathcal{F}$.
\end{proposition}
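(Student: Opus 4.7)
The plan is to reduce both sides to the same condition on $x \in M(C)$, namely that $\mathrm{Ann}(x,-) \in \mathcal{F}_C$, using the Yoneda description of cyclic submodules together with the concrete description of elements in $\mathbb{L}(M)(C)$ as classes in the directed colimit.

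First I would unpack $[\varphi_M]_C(x)$. Using diagram $(\ast)$ (before Definition \ref{defimorfismovar}) together with the fact that $\mathrm{Hom}_{\mathcal{C}}(C,-) \in \mathcal{F}_C$ (which follows from $T_1$ applied to any ideal in the filter), one has
\[
[\varphi_M]_C(x) \;=\; \varphi^{C}_{\mathrm{Hom}_{\mathcal{C}}(C,-),\,M}(\eta_x)
\]
in the colimit $\varinjlim_{I\in\mathcal{F}_C}\mathrm{Hom}_{\mathrm{Mod}(\mathcal{C})}(I,M)$, where $\eta_x:=Y_C(x)$. Since $\mathbf{Ab}$ is a Grothendieck category with exact filtered colimits, an element of such a directed colimit vanishes iff its representative is killed after restriction to some smaller index. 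Concretely, $[\varphi_M]_C(x)=0$ iff there exists $K \in \mathcal{F}_C$ with $\eta_x \circ \delta_K = 0$, where $\delta_K:K\hookrightarrow \mathrm{Hom}_{\mathcal{C}}(C,-)$ is the inclusion.

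Next I would identify this condition. The Yoneda isomorphism gives $[\eta_x]_B(f)=M(f)(x)$, so $\eta_x\circ \delta_K=0$ is equivalent to $K(B)\subseteq \mathrm{Ann}(x,-)(B)$ for every $B\in\mathcal{C}$, i.e.\ $K\subseteq \mathrm{Ann}(x,-)$. Axiom $T_1$ then yields the equivalence
\[
x \in \mathrm{Ker}([\varphi_M]_C) \iff \mathrm{Ann}(x,-)\in \mathcal{F}_C.
\]

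Finally, I would match this with membership in $t(M)(C)$. Still by Yoneda, the submodule of $M$ generated by $x$ equals $\mathrm{Im}(\eta_x)\cong \mathrm{Hom}_{\mathcal{C}}(C,-)/\mathrm{Ann}(x,-)$. By Definition \ref{filterpretorclas} this quotient lies in $\mathcal{T}_{\mathcal{F}}$ precisely when $\mathrm{Ann}(x,-)\in\mathcal{F}_C$; in that case the cyclic submodule generated by $x$ is contained in $t(M)$, forcing $x\in t(M)(C)$. Conversely, if $x\in t(M)(C)$ then, since $t(M)\in\mathcal{T}_{\mathcal{F}}$ and the annihilator of $x$ computed in the submodule $t(M)$ agrees with that computed in $M$, the definition of $\mathcal{T}_{\mathcal{F}}$ gives $\mathrm{Ann}(x,-)\in\mathcal{F}_C$. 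Combining the two equivalences yields $\mathrm{Ker}(\varphi_M)=t(M)$.

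The only delicate point is the description of elements of the filtered colimit and the verification that vanishing in the colimit is detected on a cofinal subideal; the rest is a direct translation between Yoneda, the definition of $\mathrm{Ann}(x,-)$, and the bijections of Theorems \ref{biyeGafilterpretor} and \ref{biyepretorprerad}.
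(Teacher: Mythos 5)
Your proof is correct and follows essentially the same route as the paper's: unpack the directed colimit to show $[\varphi_M]_C(x)=0$ iff $\eta_x\circ\delta_I=0$ for some $I\in\mathcal{F}_C$, translate via Yoneda to a statement about annihilators and cyclic submodules, and compare with the definition of $t(M)$ through the bijections \ref{biyeGafilterpretor} and \ref{biyepretorprerad}. The only stylistic difference is that you isolate the clean pivot criterion ``$x\in\mathrm{Ker}([\varphi_M]_C)\iff\mathrm{Ann}(x,-)\in\mathcal{F}_C$'' (using $T_1$ to pass from the witness ideal $I$ to $\mathrm{Ann}(x,-)$), whereas the paper works directly with the factorization of $\eta_x$ through $\mathrm{Hom}_{\mathcal{C}}(C,-)/I$ without naming $\mathrm{Ann}(x,-)$ in the forward direction.
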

\begin{proof}
Let $K=\mathrm{Ker}(\varphi_{M})$ be and $\psi:K\longrightarrow M$ the canonical inclusion. Given the filter $\mathcal{F}
:=\{\mathcal{F}_{C}\}_{C\in \mathcal{C}}$ the corresponding 
torsion class is
$$\mathcal{T}_{\mathcal{F}}:=\left\{M\in \mathrm{Mod}(\mathcal{C})\mid  \text{for each}\,\, C\in \mathcal{C},\,\,\, \mathrm{Ann}(x,-)\in \mathcal{F}_{C}\,\,\forall x\in M(C) \right\},$$
and the radical $t$ is defined as:
$$t(M)=\sum_{N\in \mathcal{T}_{\mathcal{F}},N\subseteq M}N.$$ Therefore, $t(M)(C)=\sum_{N\in \mathcal{T}_{\mathcal{F}},N\subseteq M}N(C).$
Let $x\in K(C)$ be, then we get that $0=[\varphi_{M}]_{C}(x)$. Then by definition of $[\varphi_{M}]_{C}$ and by the diagram $3$, we have that  $\varphi_{J,M}^{C}\Big(\theta_{J,M}^{C}(Y_{C}(x))\Big)=0$ for some $J\in \mathcal{F}_{C}$ where  $Y_{C}(x):=\eta_{x}:\mathrm{Hom}_{\mathcal{C}}(C,-)\longrightarrow M$ is such that $[\eta_{x}]_{C}(1_{C})=x$ (Yoneda isomorphism).  Then
$\theta_{J,M}^{C}(Y_{C}(x))=\mathrm{Hom}_{\mathrm{Mod}(\mathcal{C})}(\delta_{J},M)(\eta_{x})=\eta_{x}\circ \delta_{J}:J\longrightarrow M$.\\
Since $\mathcal{F}_{C}$ is a directed set we have that $\varphi_{J,M}^{C}\Big(\theta_{J,M}^{C}Y_{C}(x)\Big)=\varphi_{J,M}^{C}\Big(\eta_{x}\circ \delta_{J}\Big)=0$ implies that there exists  $I\geq J$ ($I\subseteq J$) in $\mathcal{F}_{C}$ such that  $\lambda_{J,I}\Big( \eta_{x}\circ \delta_{J}\Big)=0$ in $\mathrm{Hom}_{\mathrm{Mod}(\mathcal{C})}(I,M)$ (see, lemma 5.30 (ii) in \cite{Rotman}). We recall that 
$\lambda_{J,I}:=\mathrm{Hom}_{\mathrm{Mod}(\mathcal{C})}(\mu_{I,J},M)$, then we get that 
$$0=\lambda_{J,I}\Big( \eta_{x}\circ \delta_{J}\Big)=\mathrm{Hom}_{\mathrm{Mod}(\mathcal{C})}(\mu_{I,J},M)\Big( \eta_{x}\circ \delta_{J}\Big)=\eta_{x}\circ \delta_{J}\circ \mu_{I,J}=\eta_{x}\circ \delta_{I},$$
where $\delta_{I}:I\longrightarrow \mathrm{Hom}_{\mathcal{C}}(C,-)$ is the inclusion. By \ref{biyefilterpretor}, we have that
$$\mathcal{F}_{C}:=\left\{I(C,-)\subseteq \mathrm{Hom}_{\mathcal{C}}(C,-)\mid \frac{\mathrm{Hom}_{\mathcal{C}}(C,-)}{I(C,-)}\in \mathcal{T}_{\mathcal{F}}\right\}.$$
Since $I\in \mathcal{F}_{C}$, we have that $\frac{\mathrm{Hom}(C,-)}{I(C,-)}\in \mathcal{T}_{\mathcal{F}}$ (by the above equality). Since $\eta_{x}\circ \delta_{I}=0$, there exists $\overline{\eta_{x}}:\frac{\mathrm{Hom}_{\mathcal{C}}(C,-)}{I(C,-)}\longrightarrow M$ such that the following diagram commutes
$$\xymatrix{\mathrm{Hom}_{\mathcal{C}}(C,-)\ar[r]^{\eta_{x}}\ar[d]^{\pi_{C}} & M\\
\mathrm{Hom}_{\mathcal{C}}(C,-)/I(C,-)\ar[ur]_{\overline{\eta_{x}}}.}$$ Therefore $[\overline{\eta_{x}}]_{C}\Big(1_{C}+I(C,C)\Big)=x$. Now, let us consider the factorization  of $\overline{\eta_{x}}$
$$\xymatrix{\mathrm{Hom}_{\mathcal{C}}(C,-)/I(C,-)\ar@{->>}[r]  & N\ar@{^{(}->}[r] & M}$$ through its image. Since $\mathcal{T}_{\mathcal{F}}$ is closed under quotients we have that $N\in\mathcal{T}_{\mathcal{F}}$ and $N\subseteq M$. Since $[\overline{\eta_{x}}]_{C}\Big(1_{C}+I(C,C)\Big)=x$ we have that $x\in N(C)$ and therefore $x\in t(M)(C)=\sum_{N\in \mathcal{T}_{\mathcal{F}},N\subseteq M}N(C)$. Proving that $K(C)\subseteq t(M)(C)$.\\
On the other hand, since $t$ is the radical associated to $\mathcal{T}_{\mathcal{F}}$, by \ref{biyepretorprerad} we get that $t(M)\in \mathcal{T}_{\mathcal{F}}$.\\
Now, if $x\in t(M)(C)$ (recall that $t(M)(C)\subseteq M(C)$), then we get that  $I(C,-):=\mathrm{Ann}(x,-)\in\mathcal{F}_{C}$ (since $t(M)\in \mathcal{T}_{\mathcal{F}}$ and the description of $\mathcal{T}_{\mathcal{F}}$ in \ref{filterpretorclas}).  We know that $\mathrm{Ann}(x,C'):=\{f\in \mathrm{Hom}(C,C')\mid M(f)(x)=0\}$. Let $\delta_{I}:I(C,-)\longrightarrow \mathrm{Hom}_{\mathcal{C}}(C,-)$ the inclusion.\\
Consider  $Y_{C}(x):=\eta_{x}:\mathrm{Hom}_{\mathcal{C}}(C,-)\longrightarrow M$ such that $[\eta_{x}]_{C}(1_{C})=x$ (Yoneda isomorphism). We assert that $\eta_{x}\circ \delta_{I}=0$. Indeed, for $C'\in \mathcal{C}$ we have to see that $[\eta_{x}]_{C'}\circ [\delta_{I}]_{C'}=0$. By construction of the Yoneda isomorphism, we have that $[\eta_{x}]_{C'}:\mathrm{Hom}_{\mathcal{C}}(C,C')\longrightarrow M(C')$ satisfies that $[\eta_{x}]_{C'}(f):=M(f)(x)$ $\forall f\in \mathrm{Hom}_{\mathcal{C}}(C,C')$. Then for $f\in I(C,C')$ we have that $[\eta_{x}]_{C'}([\delta_{I}]_{C'}(f))=[\eta_{x}]_{C'}(f)=M(f)(x)=0$ since $[\delta_{I}]_{C'}$ is the inclusion and $f\in I(C,C')=\mathrm{Ann}(x,C')$.  We conclude that $\eta_{x}\circ \delta_{I}=0$.\\
Therefore, by the diagram $3$, we get that 
\begin{align*}
[\varphi_{M}]_{C}(x)= \varphi_{I,M}^{C}\Big(\theta_{I,M}^{C}\Big(Y_{C}(x)\Big)\Big)= \varphi_{I,M}^{C}\Big(\theta_{I,M}^{C}\Big(\eta_{x}\Big)\Big)= \varphi_{I,M}^{C}\Big(\eta_{x}\circ \delta_{I}\Big)=0.
\end{align*}
This, implies by definition of $K$ that $x\in K(C)$. Therefore $t(M)(C)\subseteq K(C)$ and then we conclude that $K=t(M)$.
\end{proof}

\begin{proposition}\label{L=0siMtor}
Let $M\in \mathrm{Mod}(\mathcal{C})$ be. Then $M$ is an $\mathcal{F}$-torsion module if and only if $\mathbb{L}(M)=0$.
\end{proposition}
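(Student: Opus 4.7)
The plan is to prove both directions using results already established: the easier direction will follow immediately from Proposition \ref{kervarphi} and Remark \ref{remtorsiorad}, while the harder direction will require a direct analysis of elements of the colimit defining $\mathbb{L}(M)$ together with axiom $(T_{4})$ of a Gabriel filter.

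First, if $\mathbb{L}(M)=0$ then the morphism $\varphi_{M}\colon M\longrightarrow\mathbb{L}(M)$ constructed in Proposition \ref{morfiscan} is forced to be zero, so $\mathrm{Ker}(\varphi_{M})=M$. Proposition \ref{kervarphi} identifies this kernel with $t(M)$, whence $t(M)=M$, and Remark \ref{remtorsiorad} then gives that $M$ is $\mathcal{F}$-torsion.

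For the converse, suppose $M$ is $\mathcal{F}$-torsion. Fix $C\in\mathcal{C}$; any element of $\mathbb{L}(M)(C)=\varinjlim_{I\in\mathcal{F}_{C}}\mathrm{Hom}_{\mathrm{Mod}(\mathcal{C})}(I,M)$ is represented, for some $I\in\mathcal{F}_{C}$, by a natural transformation $f\colon I\longrightarrow M$. Put $K:=\mathrm{Ker}(f)\subseteq I\subseteq\mathrm{Hom}_{\mathcal{C}}(C,-)$; since $f|_{K}=0$, it suffices to prove $K\in\mathcal{F}_{C}$, for then the class of $f$ in the colimit vanishes, yielding $\mathbb{L}(M)(C)=0$ for every $C$.

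To show $K\in\mathcal{F}_{C}$ I would apply axiom $(T_{4})$ with the pair $(I,K)$. For each $B\in\mathcal{C}$ and each $h\in I(C,B)$, unwinding the definition gives $\big(K(C,-):h\big)(C')=\{f'\in\mathrm{Hom}_{\mathcal{C}}(B,C')\mid f'\circ h\in K(C')\}$, and naturality of $f$ yields $f_{C'}(f'\circ h)=M(f')(f_{B}(h))$. Hence $\big(K:h\big)=\mathrm{Ann}(f_{B}(h),-)$, which lies in $\mathcal{F}_{B}$ because $M$ is $\mathcal{F}$-torsion. Axiom $(T_{4})$ then forces $K\in\mathcal{F}_{C}$. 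The main obstacle is precisely this identification $\big(K:h\big)=\mathrm{Ann}(f_{B}(h),-)$, since it is the naturality computation that converts the torsion hypothesis on $M$ into the closure property $(T_{4})$ and thereby collapses every colimit representative to zero; without $(T_{4})$ one would only obtain that each individual value $f_{B}(h)$ is killed by some ideal in the filter, which is not enough to kill the whole morphism $f$ on a single subideal.
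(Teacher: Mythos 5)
Your backward direction (from $\mathbb{L}(M)=0$ to $M$ being $\mathcal{F}$-torsion) is correct and is precisely the paper's argument via \ref{kervarphi} and \ref{remtorsiorad}.

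Your forward direction takes a cleaner, more direct route than the paper's, but the key identification contains a small error. You claim $\big(K(C,-):h\big)=\mathrm{Ann}(f_B(h),-)$ for $h\in I(C,B)$. Unwinding both sides: $\big(K(C,-):h\big)(C')$ consists of those $f'\colon B\to C'$ with $f'\circ h\in K(C,C')$, which forces both $f'\circ h\in I(C,C')$ and $f_{C'}(f'\circ h)=0$; whereas $\mathrm{Ann}(f_B(h),-)(C')$ only asks that $M(f')(f_B(h))=0$. A morphism $f'$ may well annihilate $f_B(h)$ without $f'\circ h$ lying in $I(C,C')$, so the inclusion $\mathrm{Ann}(f_B(h),-)\subseteq\big(K:h\big)$ fails in general. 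The correct identity, obtained by exactly the naturality computation you describe, is
\[
\big(K(C,-):h\big)=\big(I(C,-):h\big)\cap\mathrm{Ann}(f_B(h),-).
\]
The repair is immediate: $\big(I(C,-):h\big)\in\mathcal{F}_B$ by $(T_3)$ since $I\in\mathcal{F}_C$ and $h\colon C\to B$; $\mathrm{Ann}(f_B(h),-)\in\mathcal{F}_B$ by the torsion hypothesis; and $(T_2)$ then gives $\big(K:h\big)\in\mathcal{F}_B$. Axiom $(T_4)$ applied with $J=I$ and candidate $K$ now yields $K\in\mathcal{F}_C$ as you intended, and the colimit representative dies.

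With that fix your argument is correct and genuinely different from the paper's. The paper does not apply $(T_4)$ to the kernel $K$ directly; instead it builds an auxiliary ideal $J(C,-)\subseteq K(C,-)$ as the image of a map $\Psi\colon\bigoplus_{f}\mathrm{Ann}(\beta_{C'}(f),-)\to K(C,-)$, proves $\mathrm{Ann}(\beta_{C'}(f),-)\subseteq\big(J(C,-):f\big)$, uses $(T_1)$ and $(T_4)$ to place $J(C,-)$ in $\mathcal{F}_C$, and only then invokes $(T_1)$ once more to conclude $K(C,-)\in\mathcal{F}_C$. Your approach computes $\big(K:h\big)$ outright and applies $(T_4)$ to $K$, eliminating the detour through the constructed submodule $J$ and the extra $(T_1)$ step; the intersection formula packages in one line the same information the paper extracts by checking $\mathrm{Ann}(\beta_{C'}(f),-)\subseteq(J:f)$.
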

\begin{proof}
Let us suppose that $M$ is an $\mathcal{F}$-torsion module. We known that an element  $w\in \mathbb{L}(M)(C)$ is of the form $w=\varphi_{I,M}^{C}(\beta)$ for some ideal $I(C,-)\in \mathcal{F}_{C}$ and $\beta:I(C,-)\longrightarrow M$ (see, lemma 5.30 (i) in \cite{Rotman}). Let $v:K(C,-)\longrightarrow I(C,-)$ the kernel of $\beta$. We are going to prove that $K(C,-)\in \mathcal{F}_{C}$.\\
Let $C'\in \mathcal{C}$ and $f\in I(C,C')$, then $\beta_{C'}(f)\in M(C')$. Since $M$ is an $\mathcal{F}$-torsion module by hipothesis, we have that $\mathrm{Ann}\Big(\beta_{C'}(f),-\Big)\in \mathcal{F}_{C'}$ (see \ref{Ftorsion}). We recall that, $\mathrm{Ann}\Big(\beta_{C'}(f),-\Big)(X)=\{h:C'\longrightarrow X\mid M(h)(\beta_{C'}(f))=0 \}$ for all $X\in \mathcal{C}$.\\
Now, we have the Yoneda isomorphism $\mathrm{Hom}_{\mathrm{Mod}(\mathcal{C})}(\mathrm{Hom}_{\mathcal{C}}(C',-),I(C,-))\simeq I(C,C')$. So, $f$ induces a morphism $\eta_{f}:\mathrm{Hom}_{\mathcal{C}}(C',-)\longrightarrow I(C,-)$ in $\mathrm{Mod}(\mathcal{C})$ such that
$[\eta_{f}]_{X}:\mathrm{Hom}_{\mathcal{C}}(C',X)\longrightarrow I(C,X)$ is defined as $[\eta_{f}]_{X}(h):=I(C,h)(f)=hf$ $\forall h\in \mathrm{Hom}_{\mathcal{C}}(C',X)$ and for all $X\in \mathcal{C}$.\\
Then we have that
$\beta\circ \eta_{f}:\mathrm{Hom}_{\mathcal{C}}(C',-)\longrightarrow  M$
which by Yoneda isomorphism it corresponds to $[\beta\circ \eta_{f}]_{C'}(1_{C'})\in M(C')$,
but $[\beta\circ \eta_{f}]_{C'}(1_{C'})=\beta_{C'}(f)$. Now, considering $\beta_{C'}(f)\in M(C')$ by Yoneda isomorphism it corresponds to
$\eta:\mathrm{Hom}_{\mathcal{C}}(C',-)\longrightarrow  M$ such that for every $X$ we have that
$\eta_{X}(h)=M(h)(\beta_{C'}(f))$ for all $h\in\mathrm{Hom}_{\mathcal{C}}(C',X)$.  Then we conclude that
$$\eta=\beta\circ \eta_{f}.$$
We assert that
$$(\beta\circ \eta_{f})|_{\mathrm{Ann}(\beta_{C'}(f),-)}=0$$
Indeed, we have to see that for all $X$
$$\Big([\beta\circ \eta_{f}]_{X}\Big)|_{\mathrm{Ann}(\beta_{C'}(f),X)}=0$$
Indeed, let $h:C'\longrightarrow X$ in $\mathrm{Ann}(\beta_{C'}(f),X)$. Then we have that $([\beta\circ \eta_{f}]_{X}(h)=\eta_{X}(h)=M(h)(\beta_{C'}(f))=0$. This proves that $(\beta\eta_{f})|_{\mathrm{Ann}(\beta_{C'}(f),-)}=0.$\\
Let $u_{C'}:\mathrm{Ann}(\beta_{C'}(f),-)\longrightarrow \mathrm{Hom}_{\mathcal{C}}(C',-)$ be the inclusion. Since $v=\mathrm{Ker}(\beta)$, there exists a unique morphism  $\psi_{C,f}^{C'}$ such that the diagram commutes
$$(\ast):\quad \xymatrix{\mathrm{Ann}(\beta_{C'}(f),-)\ar[dr]^{\eta_{f}\circ u_{C'}}\ar[d]_{\psi_{C,f}^{C'}} & \\
K(C,-)\ar[r]^{v} &  I(C,-)\ar[r]^{\beta} & M.}$$
The family of morphisms $\{\psi_{C,f}^{C'}\}_{f\in I(C,C')}$ induces a a morphism
$$\Psi:\bigoplus_{f\in I(C,C')}\mathrm{Ann}(\beta_{C'}(f),-)\longrightarrow K(C,-)$$
We define $J(C,-):=\mathrm{Im}(\Psi)$.\\
We assert that
$$\mathrm{Ann}(\beta_{C'}(f),-)\subseteq  \Big(J(C,-): f\Big)$$
Indeed, since $f\in I(C,-)(C')=I(C,C')$ and $J(C,-)\subseteq I(C,-)$ we have by definition (see \ref{anulador}(a))  that
$$\Big(J(C,-): f\Big)(X)=\Big(J(C,X):f\Big)=\{h\in \mathrm{Hom}_{\mathcal{C}}(C',X)\mid I(C,h)(f)=hf\in J(C,X)\}$$ 
Now, by definition of $J(C,-)$, we have that 
$$J(C,X):=\sum_{f\in I(C,C')}\mathrm{Im}([\psi_{C,f}^{C'}]_{X})$$\\
Now, since $v$  and $u_{C'}$ are the inclusions, for $h\in \mathrm{Ann}(\beta_{C'}(f),X)$  we get that $[\psi_{C,f}^{C'}]_{X}(h)=v_{X}\Big([\psi_{C,f}^{C'}]_{X}(h)\Big)=[\eta_{f}]_{X}\Big([u_{C'}]_{X}(h)\Big)=[\eta_{f}]_{X}(h)=hf$ (because of the diagram $(\ast)$ and the definition of $\eta_{f}$).\\
We conclude that $hf\in \mathrm{Im}([\psi_{C,f}^{C'}]_{X})\subseteq J(C,X)$. This tell us that $h\in \Big(J(C,-):f\Big)(X)$.\\ Therefore
$\mathrm{Ann}(\beta_{C'}(f),-)\subseteq  \Big(J(C,-): f\Big)$  $\forall f\in I(C,C')$. Since $\mathrm{Ann}(\beta_{C'}(f),-)\in \mathcal{F}_{C'}$ ($M$ is an $\mathcal{F}$-torsion module), by $T_{1}$ in definition \ref{filterdef}, we conclude that
$$\Big(J(C,-):f\Big)\in \mathcal{F}_{C'}\,\,\, \forall f\in I(C,C').$$
Since $I(C,-)\in \mathcal{F}_{C}$, by $T_{4}$ we conclude that $J(C,-)\in \mathcal{F}_{C}$. Since $J(C,-)\subseteq K(C,-)$ by $T_{1}$ we conclude that $K(C,-)\in \mathcal{F}_{C}$.\\
Let us define $\mu_{K,I}=v:K(C,-)\longrightarrow I(C,-)$ as the canonical inclusion. Thus, have that $\lambda_{I,K}(\beta)=\mathrm{Hom}_{\mathrm{Mod}(\mathcal{C})}\Big(\mu_{K,I},M\Big)(\beta)=\beta \circ \mu_{I,K}=\beta \circ v=0$, since $v:K(C,-)\longrightarrow I(C,-)$ is the kernel of $\beta$. By lemma 5.30 (ii) in \cite{Rotman}, we conclude that $w=\varphi_{I,M}^{C}(\beta)=0$  in
$\mathbb{L}(M)(C)$. Therefore $\mathbb{L}(M)(C)=0$ and thus $\mathbb{L}(M)=0$.\\
Conversely, suppose that $\mathbb{L}(M)=0$. Then by \ref{kervarphi}, we have that $M=\mathrm{Ker}(\varphi_{M})=t(M)$. Proving that $M$ is an $\mathcal{F}$-torsion module.
\end{proof}

\begin{proposition}\label{cuadradochido}
Let $w=\varphi_{I,M}^{C}(\beta)\in \mathbb{L}(M)(C)$  for some ideal $I(C,-)\in \mathcal{F}_{C}$ and $\beta:I(C,-)\longrightarrow M$. Then the following diagrama
commutes
$$\xymatrix{I(C,-)\ar[r]^{\delta_{I}}\ar[d]_{\beta} & \mathrm{Hom}_{\mathcal{C}}(C,-)\ar[d]^{\psi}\\
M\ar[r]_{\varphi_{M}} & \mathbb{L}(M)}$$
where $\delta_{I}$ is the canonical inclusion and  $\psi$ is the natural transformation corresponding to $w=\varphi_{I,M}^{C}(\beta)\in \mathbb{L}(M)(C)$ via the Yoneda isomorphism $Y:\mathbb{L}(M)(C)\longrightarrow \mathrm{Hom}_{\mathrm{Mod}(\mathcal{C})}\Big(\mathrm{Hom}_{\mathcal{C}}(C,-),\mathbb{L}(M)\Big).$
\end{proposition}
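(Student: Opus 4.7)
The goal is to check the commutativity of the square as natural transformations $I(C,-)\to \mathbb{L}(M)$, so the plan is to verify the equality pointwise: for each $B\in \mathcal{C}$ and each $f\in I(C,B)\subseteq \mathrm{Hom}_{\mathcal{C}}(C,B)$ I need to show
$$[\varphi_{M}]_{B}\bigl(\beta_{B}(f)\bigr)=\psi_{B}(f).$$
The right-hand side will be computed using Yoneda together with the construction of $\mathbb{L}(M)(f)$ given in diagram (1), while the left-hand side will be computed via the definition of $[\varphi_M]_B$ in diagram (3); they will then be matched using the naturality of $\beta$.

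First I would unravel $\psi_B(f)$. Since $\psi$ corresponds to $w=\varphi_{I,M}^{C}(\beta)$ under Yoneda, one has $\psi_{C}(1_{C})=w$, and naturality of $\psi$ applied to $f\colon C\to B$ yields $\psi_{B}(f)=\mathbb{L}(M)(f)(w)=\mathbb{L}(M)(f)\bigl(\varphi_{I,M}^{C}(\beta)\bigr)$. Writing $\alpha:=\mathrm{Hom}_{\mathcal{C}}(f,-)$, the defining diagram (1) of $\mathbb{L}(M)(f)$ gives
$$\psi_{B}(f)=\varphi_{\alpha^{-1}(I),M}^{B}\bigl(\beta\circ \gamma_{I}^{C}\bigr),$$
where $\gamma_{I}^{C}\colon \alpha^{-1}(I(C,-))\to I(C,-)$ is the pullback map from Remark \ref{idealpull}(b), whose components send $h\in (I(C,-):f)(X)$ to $h\circ f\in I(C,X)$.

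Next I would unravel $[\varphi_{M}]_{B}\bigl(\beta_{B}(f)\bigr)$. Setting $y:=\beta_{B}(f)\in M(B)$, the defining diagram (3) together with Definition \ref{defimorfismovar} gives, for any $I'\in \mathcal{F}_{B}$,
$$[\varphi_{M}]_{B}(y)=\varphi_{I',M}^{B}\bigl(\eta_{y}\circ \delta_{I'}\bigr),$$
where $\eta_{y}=Y_{B}(y)$ is determined by $[\eta_{y}]_{X}(h)=M(h)(y)$. Since $\mathcal{F}$ satisfies $(T_{3})$, the ideal $\alpha^{-1}(I(C,-))=(I(C,-):f)$ lies in $\mathcal{F}_{B}$, so I may choose $I'=\alpha^{-1}(I)$ as the index, giving
$$[\varphi_{M}]_{B}\bigl(\beta_{B}(f)\bigr)=\varphi_{\alpha^{-1}(I),M}^{B}\bigl(\eta_{\beta_{B}(f)}\circ \delta_{\alpha^{-1}(I)}\bigr).$$

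Comparing the two expressions, which share the same canonical morphism $\varphi_{\alpha^{-1}(I),M}^{B}$, it only remains to show the equality of the two morphisms $\alpha^{-1}(I(C,-))\to M$, namely $\beta\circ \gamma_{I}^{C}=\eta_{\beta_{B}(f)}\circ \delta_{\alpha^{-1}(I)}$. Evaluated on $h\in (I(C,-):f)(X)$ the left side gives $\beta_{X}(h\circ f)$, while the right side gives $M(h)(\beta_{B}(f))$; these agree by the naturality square of $\beta\colon I(C,-)\to M$ applied to $h\colon B\to X$, since $I(C,h)(f)=h\circ f$. The only mildly subtle point, which I expect to be the main bookkeeping obstacle, is choosing the common index $\alpha^{-1}(I)\in \mathcal{F}_{B}$ on both sides so that the two representatives can be compared directly in $\mathrm{Hom}_{\mathrm{Mod}(\mathcal{C})}(\alpha^{-1}(I),M)$ without having to invoke an equalizer further down the directed system.
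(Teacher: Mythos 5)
Your proposal is correct and follows essentially the same route as the paper's proof: both unwind $\psi_B(f)=\mathbb{L}(M)(f)(w)$ via diagram (1), unwind $[\varphi_M]_B(\beta_B(f))$ via diagram (3) choosing $\alpha^{-1}(I)\in\mathcal{F}_B$ as the index (justified by $(T_3)$), and reduce the claim to the equality $\beta\circ\gamma_I^C=\eta_{\beta_B(f)}\circ\delta_{\alpha^{-1}(I)}$, which follows from the naturality of $\beta$ applied to $h\colon B\to X$. The "mildly subtle point" you flag—picking the common index $\alpha^{-1}(I)$ so the representatives can be compared before passing to the colimit—is precisely what the paper does as well, though it leaves that choice implicit.
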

\begin{proof}
First, we recall that $\psi:\mathrm{Hom}_{\mathcal{C}}(C,-)\longrightarrow \mathbb{L}(M)$ is such that for $C'\in \mathcal{C}$
$\psi_{C'}:\mathrm{Hom}_{\mathcal{C}}(C,C')\longrightarrow \mathbb{L}(M)(C')$ is defined as:  $\psi_{C'}(f):=\mathbb{L}(M)(f)(w)$ for all $f\in \mathrm{Hom}_{\mathcal{C}}(C,C')$.\\
Now, let $C'\in \mathcal{C}$ be, we have to show that the following equality holds
$$\psi_{C'}\circ [\delta_{I}]_{C'}=[\varphi_{M}]_{C'}\circ \beta_{C'}.$$
Indeed, let $f\in I(C,C')$ be, since $[\delta_{I}]_{C'}$ is the inclusion and by the diagram (1) before definition \ref{funtorL} we have that
\begin{align*}
\psi_{C'}([\delta_{I}]_{C'}(f))=
\psi_{C'}(f)=\mathbb{L}(M)(f)\big(\varphi_{I,M}^{C}(\beta)\big) & =\varphi_{\alpha^{-1}(I),M}^{C'}(\mathrm{Hom}(\gamma^{C}_{I},M)(\beta))\\
& =\varphi_{\alpha^{-1}(I),M}^{C'}(\beta\circ \gamma^{C}_{I})
\end{align*}
where $\gamma^{C}_{I}:\alpha^{-1}(I(C,-))\longrightarrow I(C,-)$ is such that the following diagram commutes
 $$\xymatrix{\alpha^{-1}(I(C,-))\ar[rr]^{\gamma^{C}_{I}}\ar[d]_{\delta'_{\alpha^{-1}(I)}}  & & I(C,-)\ar[d]^{\delta_{I}}\\
\mathrm{Hom}_{\mathcal{C}}(C',-)\ar[rr]_{\alpha=\mathrm{Hom}_{\mathcal{C}}(f,-)} && \mathrm{Hom}_{\mathcal{C}}(C,-)}$$
with $\gamma^{C}_{I}:=\mathrm{Hom}_{\mathcal{C}}(f,-)|_{(I(C,-):f)}$ and $\delta_{I}:I(C,-)\longrightarrow \mathrm{Hom}_{\mathcal{C}}(C,-)$ is the inclusion (see (see \ref{idealpull}(b)).\\
On the other hand, consider $y:=\beta_{C'}(f)\in M(C')$, via the Yoneda isomorphism $Y_{C'}:M(C')\longrightarrow \mathrm{Hom}_{\mathrm{Mod}(\mathcal{C})}(\mathrm{Hom}_{\mathcal{C}}(C',-),M)$ it corresponds to the natural transformation $\eta_{y}:\mathrm{Hom}_{\mathcal{C}}(C',-)\longrightarrow M$ such that 
$$[\eta_{y}]_{X}(h)=M(h)(\beta_{C'}(f))\,\,\,\forall X\in \mathcal{C}\,\,\, \forall h\in \mathrm{Hom}_{\mathcal{C}}(C',X).$$
Since $\alpha^{-1}(I(C,-))\in \mathcal{F}_{C'}$ (by $T_{3}$) and $\eta_{y}:\mathrm{Hom}_{\mathcal{C}}(C',-)\longrightarrow M$,  by definition of $\varphi_{M}$ we have that
$$[\varphi_{M}]_{C'}(\beta_{C'}(f))=\varphi_{\alpha^{-1}(I),M}^{C'}\Big(\theta_{\alpha^{-1}(I),M}^{C'}(Y_{C'}(\beta_{C'}(f))\Big)=\varphi_{\alpha^{-1}(I),M}^{C'}\Big(\theta_{\alpha^{-1}(I),M}^{C'}(\eta_{y})\Big).$$ 
In order to show that 
$$\psi_{C'}([\delta_{I}]_{C'}(f))=[\varphi_{M}]_{C'}(\beta_{C'}(f)),$$ is enough to show that $\beta\circ \gamma^{C}_{I}=\theta_{\alpha^{-1}(I),M}^{C'}(\eta_{y})$.\\
But, if we denote by $\delta'_{\alpha^{-1}(I)}:\alpha^{-1}(I)\longrightarrow \mathrm{Hom}_{\mathcal{C}}(C',-)$ we have that $\theta_{\alpha^{-1}(I),M}^{C'}(\eta_{y})=\mathrm{Hom}_{\mathrm{Mod}(\mathcal{C})}\Big(\delta'_{\alpha^{-1}(I)},M\Big)(\eta_{y})=\eta_{y}\circ \delta'_{\alpha^{-1}(I)}$.
So, for $X\in \mathcal{C}$ we have to show the following diagram commutes
$$\xymatrix{\alpha^{-1}(I(C,-))(X)\ar[rr]^{[\gamma^{C}_{I}]_{X}}\ar[d]_{[\delta'_{\alpha^{-1}(I)}]_{X} } & & I(C,X)\ar[d]^{[\beta]_{X}}\\
\mathrm{Hom}_{\mathcal{C}}(C',X)\ar[rr]_{[\eta_{y}]_{X}} && M(X).}$$ 
Indeed, let $h:C'\longrightarrow X$ in $\alpha^{-1}(I(C,-))(X)$, since $[\delta'_{\alpha^{-1}(I)}]_{X}$ is the inclusion,  we have that $[\eta_{y}]_{X}\Big([\delta'_{\alpha^{-1}(I)}]_{X}(h)\Big)=[\eta_{y}]_{X}(h):=M(h)(\beta_{C'}(f))$ (see def. of $\eta_{y}$ above).\\
On the other hand, $\beta_{X}([\gamma^{C}_{I}]_{X}(h)):=\beta_{X}(hf)$ (since $\gamma^{C}_{I}:=\mathrm{Hom}_{\mathcal{C}}(f,-)|_{(I(C,-):f)}$).\\
Now, since $\beta:I(C,-)\longrightarrow M$ is a natural transformation, by considering $h:C'\longrightarrow X$  we have that
$M(h)\circ \beta_{C'}=\beta_{X}\circ I(C,h).$\\
Then $M(h)\Big(\beta_{C'}(f)\Big)=\beta_{X}(I(C,h)(f))=\beta_{X}(hf)$.
This shows that $\beta\circ \gamma_{I}^{C}=\eta_{y} \circ \delta'_{\alpha^{-1}(I)}.$
Proving that $\psi_{C'}([\delta_{I}]_{C'}(f))=[\varphi_{M}]_{C'}(\beta_{C'}(f))$. Therefore the required diagram commutes.
\end{proof}

\begin{proposition}\label{Cokervarphi}
For each $M\in \mathrm{Mod}(\mathcal{C})$ we have that 
$\mathrm{Coker}(\varphi_{M})$ is an $\mathcal{F}$-torsion module.
\end{proposition}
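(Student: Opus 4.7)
The plan is to use Proposition \ref{cuadradochido} to exhibit, for any element of the cokernel, an ideal from the Gabriel filter that annihilates it. Since cokernels in $\mathrm{Mod}(\mathcal{C})$ are computed objectwise, I fix $C\in\mathcal{C}$ and an element $\bar{w}\in \mathrm{Coker}(\varphi_M)(C)$ and lift it to some $w\in \mathbb{L}(M)(C)$. By the standard description of elements of a direct limit indexed by a directed set (the same Lemma 5.30(i) of Rotman cited earlier in the section), we may write $w=\varphi_{I,M}^{C}(\beta)$ for some $I(C,-)\in\mathcal{F}_C$ and some $\beta:I(C,-)\longrightarrow M$. The goal is then reduced to showing $I(C,-)\subseteq \mathrm{Ann}(\bar{w},-)$.

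The key step is to apply Proposition \ref{cuadradochido} with this choice of $I$ and $\beta$, producing the commutative square involving $\delta_I$, $\beta$, $\varphi_M$, and the natural transformation $\psi:\mathrm{Hom}_\mathcal{C}(C,-)\longrightarrow \mathbb{L}(M)$ that Yoneda-corresponds to $w$. Given any $f\in I(C,C')$, recall from the proof of that proposition (and the Yoneda construction of $\psi$) that $\psi_{C'}(f)=\mathbb{L}(M)(f)(w)$. Evaluating $\psi_{C'}\circ[\delta_I]_{C'}=[\varphi_M]_{C'}\circ\beta_{C'}$ at $f$ therefore yields
\[
\mathbb{L}(M)(f)(w)=[\varphi_M]_{C'}\bigl(\beta_{C'}(f)\bigr)\in \mathrm{Im}\bigl([\varphi_M]_{C'}\bigr).
\]
Passing to the cokernel, this says exactly $\mathrm{Coker}(\varphi_M)(f)(\bar{w})=0$, i.e., $f\in \mathrm{Ann}(\bar{w},-)(C')$. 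Since $f\in I(C,C')$ was arbitrary, we obtain the desired containment $I(C,-)\subseteq \mathrm{Ann}(\bar{w},-)$.

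To finish, axiom $(T_1)$ of Definition \ref{filterdef} applied to the above containment (together with $I(C,-)\in\mathcal{F}_C$) gives $\mathrm{Ann}(\bar{w},-)\in\mathcal{F}_C$. As this holds for every $C$ and every $\bar{w}\in \mathrm{Coker}(\varphi_M)(C)$, the description of $\mathcal{T}_\mathcal{F}$ in Definition \ref{filterpretorclas}(a) yields $\mathrm{Coker}(\varphi_M)\in \mathcal{T}_\mathcal{F}$; equivalently, $\mathrm{Coker}(\varphi_M)$ is $\mathcal{F}$-torsion. There is essentially no obstacle once Proposition \ref{cuadradochido} is invoked; the one point to keep in mind is the Yoneda identity $\psi_{C'}(f)=\mathbb{L}(M)(f)(w)$, which is precisely what converts the commutative square into an annihilation statement in the cokernel.
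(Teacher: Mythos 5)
Your proof is correct and follows essentially the same route as the paper's: lift $\bar{w}$ to $w=\varphi_{I,M}^{C}(\beta)$, apply Proposition \ref{cuadradochido} together with the Yoneda identity $\psi_{C'}(f)=\mathbb{L}(M)(f)(w)$ to deduce $I(C,-)\subseteq\mathrm{Ann}(\bar{w},-)$, and then invoke $(T_1)$. No substantive differences.
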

\begin{proof}
Let us consider $N:=\mathrm{Im}(\varphi_{M})$ (that is, $N(C):=\mathrm{Im}([\varphi_{M}]_{C})$ for $C\in \mathcal{C}$) and $u:N\longrightarrow \mathbb{L}(M)$ the inclusion. Then we have that $\pi:\mathbb{L}(M)\longrightarrow \mathbb{L}(M)/N$ is the cokernel of $\varphi_{M}$, where $(\mathbb{L}(M)/N)(C):=\mathbb{L}(M)(C)/N(C)$ for $C\in \mathcal{C}$. Let $\overline{w}:=w+N(C)\in \mathbb{L}(M)(C)/N(C)$ with $w\in \mathbb{L}(M)(C)$. We known that $w=\varphi_{I,M}^{C}(\beta)$ for some ideal $I(C,-)\in \mathcal{F}_{C}$ and $\beta:I(C,-)\longrightarrow M$. 
By \ref{cuadradochido}, for each $C'\in \mathcal{C}$ we have the following commutative diagram 
$$\xymatrix{I(C,C')\ar[r]^{[\delta_{I}]_{C'}}\ar[d]_{\beta_{C'}} & \mathrm{Hom}_{\mathcal{C}}(C,C')\ar[d]^{\psi_{C'}}\\
M(C')\ar[r]_{[\varphi_{M}]_{C'}} & \mathbb{L}(M)(C')\ar[r] & \mathbb{L}(M)(C')/N(C')\ar[r] & 0. }$$
For $f\in I(C,C')$ we get that $\psi_{C'}(f)=\psi_{C'}([\delta_{I}]_{C'}(f))=[\varphi_{M}]_{C'}(\beta_{C'}(f))\in \mathrm{Im}([\varphi_{M}]_{C'})$. But by definition of $\psi_{C'}$ we have that
$\psi_{C'}(f)=\mathbb{L}(M)(f)\big(\varphi_{I,M}^{C}(\beta)\big)=\mathbb{L}(M)(f)\big( w\big)$. Therefore we conclude that
$$(\ast):\quad \Big(\mathbb{L}(M)(f)\Big)\big( w\big)=[\varphi_{M}]_{C'}(\beta_{C'}(f))\in  N(C')=\mathrm{Im}([\varphi_{M}]_{C'})\quad \forall f\in I(C,C').$$
Since $\overline{w}\in (\mathbb{L}(M)/N)(C)$ we have that
$\mathrm{Ann}(\overline{w},-)$ is a left ideal of $\mathrm{Hom}_{\mathcal{C}}(C,-)$ defined as
\begin{align*}
\mathrm{Ann}(\overline{w},-)(C'): & =\{f\in \mathrm{Hom}_{\mathcal{C}}(C,C')\mid \Big((\mathbb{L}(M)/N)(f)\Big)(\overline{w})=0\}\\
 & = \{f\in \mathrm{Hom}_{\mathcal{C}}(C,C')\mid \Big(\mathbb{L}(M)(f)\Big)(w)\in N(C')\}
\end{align*}
By the assertion given in $(\ast)$ above, we have that $I(C,-)\subseteq \mathrm{Ann}\big(\overline{w},-\big)$. Since $I\in \mathcal{F}_{C}$ and $\mathcal{F}$ is a Gabriel filter, we have that $\mathrm{Ann}\big(\overline{w},-\big)\in \mathcal{F}_{C}$. This proves that $\mathbb{L}(M)/N$ is an $\mathcal{F}$-torsion module (see def. \ref{Ftorsion}).
\end{proof}

\section{Gabriel localization}\label{sec4}
Now, we recall the following construction: given a radical $t:\mathrm{Mod}(\mathcal{C})\longrightarrow  \mathrm{Mod}(\mathcal{C})$ we can construct a fucntor
$q_{t}:\mathrm{Mod}(\mathcal{C})\longrightarrow  \mathrm{Mod}(\mathcal{C})$ defined as $q_{t}(M):=\frac{M}{t(M)}$ for all $M\in \mathrm{Mod}(\mathcal{C})$.

\begin{definition}
Let $\mathcal{C}$ be a preadditive category and $\mathcal{F}:=\{\mathcal{F}_{C}\}_{C\in \mathcal{C}}$ be a left Gabriel filter in the category $\mathcal{C}$. The $\textbf{Gabriel localisation functor}$  with respect to $\mathcal{F}$ is the functor
$$\mathbb{G}:=\mathbb{L}\circ q_{t}:\mathrm{Mod}(\mathcal{C})\longrightarrow  \mathrm{Mod}(\mathcal{C}),$$ where $t$ is the radical associated to the filter $\mathcal{F}$. The $\textbf{Gabriel localisation}$ of $M$ with respect to $\mathcal{F}$ is the $\mathcal{C}$-module
$$\mathbb{G}(M):=\mathbb{L}\Big (\frac{M}{t(M)}\Big).$$
\end{definition}

\begin{definition}
We define $\Delta_{M}:M\longrightarrow \mathbb{G}(M)$ as the composition 
$$\xymatrix{M\ar[r]^{\pi_{M}} & \frac{M}{t(M)}\ar[r]^{\varphi_{\frac{M}{t(M)}}} & \mathbb{L}(\frac{M}{t(M)}).}$$
\end{definition}

\begin{proposition}\label{isoLL}
There exists an isomorphism
$$\mathbb{G}(M)\simeq \mathbb{L}\mathbb{L}(M).$$
\end{proposition}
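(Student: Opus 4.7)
The plan is to exploit the four-term exact sequence naturally attached to $\varphi_M$, split it into a short exact sequence, and apply the left exact functor $\mathbb{L}$, using that the cokernel of $\varphi_M$ is $\mathcal{F}$-torsion and therefore annihilated by $\mathbb{L}$.

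First, I would assemble the following data. By Proposition \ref{kervarphi} we have $\mathrm{Ker}(\varphi_M) = t(M)$, so setting $N := \mathrm{Im}(\varphi_M)$ we obtain a canonical isomorphism $M/t(M) \simeq N$ and a short exact sequence
\begin{equation*}
0 \longrightarrow N \longrightarrow \mathbb{L}(M) \longrightarrow \mathrm{Coker}(\varphi_M) \longrightarrow 0.
\end{equation*}
By Proposition \ref{Cokervarphi} the module $\mathrm{Coker}(\varphi_M)$ is $\mathcal{F}$-torsion, and therefore Proposition \ref{L=0siMtor} gives $\mathbb{L}(\mathrm{Coker}(\varphi_M)) = 0$.

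Next, since $\mathbb{L}$ is left exact (the proposition after Definition \ref{funtorL}), applying it to the short exact sequence above yields an exact sequence
\begin{equation*}
0 \longrightarrow \mathbb{L}(N) \longrightarrow \mathbb{L}\mathbb{L}(M) \longrightarrow \mathbb{L}(\mathrm{Coker}(\varphi_M)) = 0.
\end{equation*}
Hence the induced map $\mathbb{L}(N) \to \mathbb{L}\mathbb{L}(M)$ is both injective (by left exactness) and surjective (because its image is the kernel of the zero map), so it is an isomorphism. Combining this with the isomorphism $N \simeq M/t(M)$ and applying $\mathbb{L}$, I obtain
\begin{equation*}
\mathbb{G}(M) = \mathbb{L}\Bigl(\frac{M}{t(M)}\Bigr) \simeq \mathbb{L}(N) \simeq \mathbb{L}\mathbb{L}(M),
\end{equation*}
which is the required isomorphism.

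I do not expect any real obstacle: the argument is essentially formal, relying only on the three previously established facts (the computation of $\mathrm{Ker}(\varphi_M)$, the torsion property of $\mathrm{Coker}(\varphi_M)$, and the vanishing of $\mathbb{L}$ on torsion modules) together with left exactness of $\mathbb{L}$. The only point that deserves a brief mention is the naturality of the chain of isomorphisms, which follows because $\varphi$ is a natural transformation $1_{\mathrm{Mod}(\mathcal{C})} \to \mathbb{L}$ and the factorization through $N = \mathrm{Im}(\varphi_M)$ is functorial in $M$.
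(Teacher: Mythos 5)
Your proposal is correct and follows essentially the same route as the paper: factor $\varphi_M$ through its image (identified with $M/t(M)$ via Proposition \ref{kervarphi}), apply the left exact functor $\mathbb{L}$ to the resulting short exact sequence, and use Propositions \ref{Cokervarphi} and \ref{L=0siMtor} to kill $\mathbb{L}(\mathrm{Coker}(\varphi_M))$, forcing the induced map $\mathbb{L}(M/t(M)) \to \mathbb{L}\mathbb{L}(M)$ to be an isomorphism. The only cosmetic difference is that the paper also records the commuting ladder with the vertical $\varphi$-maps, which you omit but do not need for this statement.
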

\begin{proof}
Consider the exact sequence
$$\xymatrix{0\ar[r] & t(M)\ar[r] & M\ar[r]^{\varphi_{M}} & \mathbb{L}(M)\ar[r] & \mathrm{Coker}(\varphi_{M})\ar[r] & 0}$$
Consider $\xymatrix{M\ar@{->>}[r]^{\pi_{M}} &\frac{M}{t(M)}\ar@{^{(}->}[r]^{\gamma_{M}} & \mathbb{L}(M)}$ the factorization through its image of $\varphi_{M}$.
Then we have the exact sequence
$$\xymatrix{0\ar[r] & M/t(M)\ar[r]^{\gamma_{M}} & \mathbb{L}(M)\ar[r] & \mathrm{Coker}(\varphi_{M})\ar[r] & 0.}$$
Applying $\mathbb{L}$ we get the exact sequence
$$\xymatrix{0\ar[r] & M/t(M)\ar[r]^{\gamma_{M}}\ar[d]^{\varphi_{\frac{M}{t(M)}}} & \mathbb{L}(M)\ar[r]\ar[d]^{\varphi_{\mathbb{L}(M)}} & \mathrm{Coker}(\varphi_{M})\ar[r]\ar[d]^{\varphi_{\mathrm{Coker}(\varphi_{M})}} & 0\\
0\ar[r] & \mathbb{L}(M/t(M))\ar[r]^{\mathbb{L}(\gamma_{M})} & \mathbb{L}(\mathbb{L}(M))\ar[r] & \mathbb{L}(\mathrm{Coker}(\varphi_{M})).}$$
Since $\mathrm{Coker}(\varphi_{M})$ is an $\mathcal{F}$-torsion module, we have that
$ \mathbb{L}(\mathrm{Coker}(\varphi_{M}))=0$. Proving that $\mathbb{L}(\gamma_{M})$ is an isomorphism. That is $\mathbb{G}(M)\simeq \mathbb{L}(\mathbb{L}(M)).$
\end{proof}

\begin{remark}\label{otherdescLL}
\begin{enumerate}
\item [(a)]
By the proof of \ref{isoLL} we get the following commutative diagram
$$\xymatrix{M\ar[d]^{\pi_{M}}\ar[r]^{\varphi_{M}} & \mathbb{L}(M)\ar@{=}[d]\\
M/t(M)\ar[r]^{\gamma_{M}}\ar[d]^{\varphi_{\frac{M}{t(M)}}} & \mathbb{L}(M)\ar[d]^{\varphi_{\mathbb{L}(M)}}\\
\mathbb{L}(M/t(M))\ar[r]^{\mathbb{L}(\gamma_{M})} & \mathbb{L}(\mathbb{L}(M)).}$$
where $\mathbb{L}(\gamma_{M})$ is an isomorphism. Then we we could have defined the $\Delta_{M}$ as the composition
$\varphi_{\mathbb{L}(M)}\circ \varphi_{M}: M \longrightarrow \mathbb{L}\mathbb{L}(M).$

\item [(b)] We have that $\varphi_{\frac{M}{t(M)}}:\frac{M}{t(M)}\longrightarrow \mathbb{L}(\frac{M}{t(M)})$ is a monomorphism. Then the composition  $\xymatrix{M\ar@{->>}[r]^{\pi_{M}} &\frac{M}{t(M)}\ar@{^{(}->}[r]^{\varphi_{\frac{M}{t(M)}}}& \mathbb{L}(\frac{M}{t(M)})}$ is the factorization of $\Delta_{M}$ through its image. Indeed, this follows from \ref{kervarphi} and the fact that $t(\frac{M}{t(M)})=0$, since $t$ is a radical.
\end{enumerate}
\end{remark}

\begin{corollary}\label{kerDelta}
We have exact sequence
$$\xymatrix{0\ar[r] & t(M)\ar[r] & M\ar[r]^{\Delta_{M}} & \mathbb{G}(M)\ar[r] & \mathrm{Coker}(\varphi_{\frac{M}{t(M)}})\ar[r] & 0}$$
In particular, $\mathrm{Ker}(\Delta_{M})$ and $\mathrm{Coker}(\Delta_{M})$ are of $\mathcal{F}$-torsion.
\end{corollary}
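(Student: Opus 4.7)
The plan is to decompose $\Delta_M$ according to its definition, identify the kernel and cokernel separately, and then invoke already-established torsion results for each end of the sequence.

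Recall $\Delta_M = \varphi_{M/t(M)}\circ \pi_M$, where $\pi_M \colon M \twoheadrightarrow M/t(M)$ is the canonical projection and $\varphi_{M/t(M)}\colon M/t(M)\to \mathbb{L}(M/t(M))=\mathbb{G}(M)$ is the natural morphism from Proposition \ref{morfiscan}. The first step is to compute $\mathrm{Ker}(\Delta_M)$. Since $t$ is a radical we have $t(M/t(M))=0$, and by Proposition \ref{kervarphi} this gives $\mathrm{Ker}(\varphi_{M/t(M)}) = t(M/t(M)) = 0$; so $\varphi_{M/t(M)}$ is a monomorphism (this is exactly Remark \ref{otherdescLL}(b)). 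Therefore $\mathrm{Ker}(\Delta_M) = \mathrm{Ker}(\pi_M) = t(M)$, which gives exactness of the first three terms of the sequence.

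Next, I would compute the cokernel. Because $\pi_M$ is an epimorphism, $\mathrm{Im}(\Delta_M) = \mathrm{Im}(\varphi_{M/t(M)}\circ \pi_M) = \mathrm{Im}(\varphi_{M/t(M)})$. Hence
$$\mathrm{Coker}(\Delta_M) \;=\; \mathbb{G}(M)/\mathrm{Im}(\Delta_M) \;=\; \mathbb{L}(M/t(M))/\mathrm{Im}(\varphi_{M/t(M)}) \;=\; \mathrm{Coker}(\varphi_{M/t(M)}),$$
so the fourth arrow is the canonical projection $\mathbb{G}(M)\twoheadrightarrow \mathrm{Coker}(\varphi_{M/t(M)})$, completing the exactness of the displayed four-term sequence.

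Finally, for the ``in particular'' statement: $\mathrm{Ker}(\Delta_M)=t(M)$ is $\mathcal{F}$-torsion since, by Proposition \ref{biyepretorprerad} and Remark \ref{remtorsiorad}, the radical $t$ satisfies $t(t(M))=t(M)$, i.e. $t(M)\in \mathcal{T}_\mathcal{F}$. The cokernel $\mathrm{Coker}(\Delta_M)=\mathrm{Coker}(\varphi_{M/t(M)})$ is $\mathcal{F}$-torsion by a direct application of Proposition \ref{Cokervarphi} to the module $M/t(M)$. Since the proof is entirely an assembly of previously proved facts, there is no substantive obstacle; the only point that warrants care is ensuring that the injectivity of $\varphi_{M/t(M)}$ is justified via the combination of Proposition \ref{kervarphi} and the radical identity $t\circ t = t$, which is what allows the clean identification of kernel and cokernel.
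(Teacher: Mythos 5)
Your proof is correct and takes the same route the paper does: the paper's own proof is the one-liner ``This follows from \ref{otherdescLL}(b), \ref{Cokervarphi} and \ref{kervarphi},'' and your argument is precisely an unpacking of those three references — using \ref{otherdescLL}(b) (monomorphy of $\varphi_{M/t(M)}$, itself from \ref{kervarphi} plus $t(M/t(M))=0$) to identify $\mathrm{Ker}(\Delta_M)=t(M)$, and \ref{Cokervarphi} applied to $M/t(M)$ to get the cokernel statement.
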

\begin{proof}
This follows from \ref{otherdescLL}(b), \ref{Cokervarphi} and \ref{kervarphi}.
\end{proof}

\begin{proposition}
For each module we have that $\mathbb{G}(M)\simeq \mathbb{G}(\frac{M}{t(M)})$.
\end{proposition}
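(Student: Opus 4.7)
The plan is to observe that this is essentially immediate from the definition of $\mathbb{G}$ combined with the fact that the preradical $t$ associated to a Gabriel filter is in fact a radical, which was recorded via Proposition \ref{biyepretorprerad} (the bijection with left exact radicals).

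First I would unfold the definition on both sides. By definition, $\mathbb{G}(M) = \mathbb{L}(M/t(M))$, while
\[
\mathbb{G}\!\left(\tfrac{M}{t(M)}\right) = \mathbb{L}\!\left(\tfrac{M/t(M)}{t(M/t(M))}\right).
\]
So the whole statement reduces to identifying the quotient $\frac{M/t(M)}{t(M/t(M))}$ with $M/t(M)$.

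The key step is then to invoke that $t$ is a radical, meaning $t(N/t(N)) = 0$ for every $N \in \mathrm{Mod}(\mathcal{C})$. Applying this with $N = M$, we get $t(M/t(M)) = 0$, so the canonical projection
\[
\tfrac{M}{t(M)} \longrightarrow \tfrac{M/t(M)}{t(M/t(M))}
\]
is an isomorphism (its kernel is $t(M/t(M)) = 0$ and it is surjective). Applying the functor $\mathbb{L}$ to this isomorphism gives
\[
\mathbb{G}(M) = \mathbb{L}\!\left(\tfrac{M}{t(M)}\right) \simeq \mathbb{L}\!\left(\tfrac{M/t(M)}{t(M/t(M))}\right) = \mathbb{G}\!\left(\tfrac{M}{t(M)}\right),
\]
which is the desired isomorphism. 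I do not anticipate any obstacle here: the only non-trivial ingredient is the radical property $t(M/t(M))=0$, which has already been established as part of the bijection between hereditary torsion classes and left exact radicals recalled earlier.
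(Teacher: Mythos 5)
Your proof is correct and follows essentially the same route as the paper: unfold $\mathbb{G}=\mathbb{L}\circ q_t$, use the radical identity $t(M/t(M))=0$ to identify $q_t(M/t(M))$ with $M/t(M)$, and apply $\mathbb{L}$. The paper phrases this as a chain of equalities while you phrase the last step as applying $\mathbb{L}$ to an isomorphism, but the content is the same.
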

\begin{proof}
We have that $t(M/t(M))=0$, so, we have that $q_{t}(M/t(M))=M/t(M)$.
So, we have that $\mathbb{G}(M/t(M))=\mathbb{L}(q_{t}(M/t(M)))=\mathbb{L}(M/t(M))=\mathbb{L}q_{t}(M)=\mathbb{G}(M).$
\end{proof}

\begin{definition}
A $\mathcal{C}$-module $M$ is $\mathcal{F}$-$\textbf{closed}$ if
$$\theta_{I,M}^{C}:\mathrm{Hom}_{\mathrm{Mod}(\mathcal{C})}(\mathrm{Hom}_{\mathcal{C}}(C,-),M)\longrightarrow \mathrm{Hom}_{\mathrm{Mod}(\mathcal{C})}(I,M)$$ is an isomorphism for all $C\in \mathcal{C}$ and $I\in \mathcal{F}_{C}$ (recall that $\theta_{I,M}^{C}:=\mathrm{Hom}_{\mathrm{Mod}(\mathcal{C})}(\delta_{I},M)$ where $\delta_{I}:I\longrightarrow \mathrm{Hom}_{\mathcal{C}}(C,-)$ is the inclusion).
\end{definition}

\begin{proposition}\label{Flibreclode}
Let $M\in \mathrm{Mod}(\mathcal{C})$ be a $\mathcal{F}$-closed module. Let $C\in \mathcal{C}$ be and $x\in M(C)$ such that $\mathrm{Ann}(x,-)\in \mathcal{F}_{C}$, then $x=0$. In particular $t(M)=0$.
\end{proposition}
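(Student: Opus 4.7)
The strategy is to convert the element $x \in M(C)$ into a natural transformation via the Yoneda isomorphism and exploit the injectivity built into the $\mathcal{F}$-closed condition. The key observation is that the hypothesis $\mathrm{Ann}(x,-) \in \mathcal{F}_{C}$ says exactly that the restriction of the Yoneda map of $x$ to the ideal $I:=\mathrm{Ann}(x,-)$ is the zero morphism; since $\theta_{I,M}^{C}$ is an isomorphism, the Yoneda map itself must vanish, forcing $x=0$.

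More explicitly, I would first let $\eta_{x}:\mathrm{Hom}_{\mathcal{C}}(C,-)\longrightarrow M$ be the natural transformation corresponding to $x$ under the Yoneda isomorphism, so that $[\eta_{x}]_{C'}(f)=M(f)(x)$ for every $f\in \mathrm{Hom}_{\mathcal{C}}(C,C')$. Setting $I:=\mathrm{Ann}(x,-)\in\mathcal{F}_{C}$ and letting $\delta_{I}:I\hookrightarrow \mathrm{Hom}_{\mathcal{C}}(C,-)$ be the inclusion, I would then compute, for each $C'\in\mathcal{C}$ and each $f\in I(C,C')=\mathrm{Ann}(x,C')$, that
\[
[\eta_{x}\circ \delta_{I}]_{C'}(f)=[\eta_{x}]_{C'}(f)=M(f)(x)=0,
\]
by the very definition of $\mathrm{Ann}(x,-)$. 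Hence $\theta_{I,M}^{C}(\eta_{x})=\eta_{x}\circ \delta_{I}=0=\theta_{I,M}^{C}(0)$. Since $M$ is $\mathcal{F}$-closed, $\theta_{I,M}^{C}$ is an isomorphism, and in particular injective; thus $\eta_{x}=0$ and consequently $x=[\eta_{x}]_{C}(1_{C})=0$.

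For the final assertion $t(M)=0$, recall from Remark \ref{remtorsiorad} that $t(M)$ is an $\mathcal{F}$-torsion submodule of $M$. Given any $C\in\mathcal{C}$ and any $x\in t(M)(C)\subseteq M(C)$, since $t(M)\subseteq M$ is a monomorphism of functors the annihilator of $x$ computed inside $t(M)$ coincides with $\mathrm{Ann}(x,-)$ computed inside $M$; by Definition \ref{Ftorsion} the former lies in $\mathcal{F}_{C}$, so $\mathrm{Ann}_{M}(x,-)\in\mathcal{F}_{C}$ and the first part yields $x=0$. Therefore $t(M)(C)=0$ for every $C$, i.e.\ $t(M)=0$.

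I do not foresee any genuine obstacle here: the whole statement is essentially a combing of Yoneda with the defining injectivity of $\theta_{I,M}^{C}$. The only point that demands a line of care is the identification of the two annihilators when passing from $t(M)$ to $M$, which is immediate because subfunctor inclusions are componentwise injective.
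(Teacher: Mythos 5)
Your argument matches the paper's proof essentially step for step: both transfer $x$ to $\eta_x$ via Yoneda, observe that $\eta_x\circ\delta_I=0$ because $I=\mathrm{Ann}(x,-)$, and then invoke the injectivity of $\theta_{I,M}^{C}$ coming from the $\mathcal{F}$-closed hypothesis to conclude $\eta_x=0$ and hence $x=0$. Your treatment of the final assertion $t(M)=0$ is slightly more explicit than the paper's one-line appeal to $t(M)=\sum_{N\in\mathcal{T}_{\mathcal{F}},\,N\subseteq M}N$, but the content is the same.
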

\begin{proof}
By Yoneda's Lemma we have a morphism 
$\eta_{x}:\mathrm{Hom}_{\mathcal{C}}(C,-)\longrightarrow M$ such that
$[\eta_{x}]_{B}(f)=M(f)(x)$ for all $f\in \mathrm{Hom}_{\mathcal{C}}(C,B)$. Let $I(C,-):=\mathrm{Ann}(x,-)$ and $\delta_{I}:I\longrightarrow \mathrm{Hom}_{\mathcal{C}}(C,-)$ the canonical inclusion. Since $M$ is $\mathcal{F}$-closed we have an isomorphism
$$\theta_{I,M}^{C}:\mathrm{Hom}_{\mathrm{Mod}(\mathcal{C})}(\mathrm{Hom}_{\mathcal{C}}(C,-),M)\longrightarrow \mathrm{Hom}_{\mathrm{Mod}(\mathcal{C})}(I,M)$$
But $\theta_{I,M}^{C}(\eta_{x})=\eta_{x}\circ \delta_{I}:I\longrightarrow M$ satisfies that $[\eta_{x}\circ \delta_{I}]_{B}(f)=[\eta_{x}]_{B}(f)=M(f)(x)=0$ for all $B\in \mathcal{C}$ and for all $f\in \mathrm{Hom}_{\mathcal{C}}(C,B)$, since $f\in I(C,B)=\mathrm{Ann}(x,-)(B)$. Thus, we have that $\eta_{x}\circ \delta_{I}=0$ and since $\theta_{I,M}^{C}$ is an isomorphism we have that $\eta_{x}=0$ and therefore by Yoneda's Lemma we conclude that $x=0$.\\
Now, since $t(M)=\sum_{N\in \mathcal{T}_{\mathcal{F}},N\subseteq M}N,$ we conclude that $t(M)=0$.
\end{proof}

\begin{corollary}\label{Closeddeltaiso}
If $M$ is an $\mathcal{F}$-closed $\mathcal{C}$-module, then $\Delta_{M}:M\longrightarrow \mathbb{G}(M)$ is an isomorphism.
\end{corollary}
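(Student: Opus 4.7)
The plan is to reduce the statement to showing that $\varphi_M \colon M \longrightarrow \mathbb{L}(M)$ is an isomorphism, and then to deduce this directly from the $\mathcal{F}$-closedness hypothesis using the colimit construction of $\mathbb{L}$.

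First I would invoke Proposition \ref{Flibreclode} to obtain $t(M)=0$. This immediately gives $M/t(M)=M$ and $\pi_M = \mathrm{id}_M$, so that
$$\Delta_M = \varphi_{M/t(M)} \circ \pi_M = \varphi_M \colon M \longrightarrow \mathbb{L}(M) = \mathbb{G}(M).$$
Thus it remains to show $\varphi_M$ is an isomorphism.

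For each $C\in\mathcal{C}$, recall from Definition \ref{defimorfismovar} that $[\varphi_M]_C = [\psi_M]_C \circ Y_C$, where $Y_C$ is the Yoneda isomorphism. Hence it suffices to show that $[\psi_M]_C$ is an isomorphism. By construction, $[\psi_M]_C$ is the unique morphism $\mathrm{Hom}_{\mathrm{Mod}(\mathcal{C})}(\mathrm{Hom}_{\mathcal{C}}(C,-),M) \longrightarrow \varinjlim_{I\in\mathcal{F}_C}\mathrm{Hom}_{\mathrm{Mod}(\mathcal{C})}(I,M)$ induced by the cocone $\{\theta_{I,M}^C\}_{I\in\mathcal{F}_C}$ over the defining directed system of $\mathbb{L}(M)(C)$. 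The $\mathcal{F}$-closedness of $M$ is precisely the statement that every $\theta_{I,M}^C$ is an isomorphism.

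The key step is then to argue that an isomorphism between a constant directed system and a directed system induces an isomorphism on colimits. Viewing $\mathrm{Hom}_{\mathrm{Mod}(\mathcal{C})}(\mathrm{Hom}_{\mathcal{C}}(C,-),M)$ as the colimit of the constant system with this value over the directed set $\mathcal{F}_C$, the family $\{\theta_{I,M}^C\}_I$ defines a morphism of directed systems whose components are isomorphisms; taking colimits yields that $[\psi_M]_C$ is an isomorphism. Concretely, one can check this by exhibiting an inverse: given $w = \varphi_{I,M}^C(\beta)\in \mathbb{L}(M)(C)$ with $I\in\mathcal{F}_C$ and $\beta\colon I\longrightarrow M$, take $\bigl([\psi_M]_C\bigr)^{-1}(w):=(\theta_{I,M}^C)^{-1}(\beta)$, and verify using compatibility with the transition maps $\lambda_{J,I}$ that this is well defined independent of the representative. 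I do not expect any substantial obstacle; the only mild bookkeeping is ensuring that the inverses $(\theta_{I,M}^C)^{-1}$ are compatible with the directed system, but this follows from the naturality of $\theta_{-,M}^C$ in its first argument together with the fact that all $\theta_{I,M}^C$ are iso.
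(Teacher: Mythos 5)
Your proof is correct and follows essentially the same route as the paper's: reduce to showing $\varphi_M$ is an isomorphism via $t(M)=0$ from Proposition \ref{Flibreclode}, then observe that diagram $(3)$ factors $[\varphi_M]_C$ through the Yoneda isomorphism and the canonical map $[\psi_M]_C$ into the colimit, which is an isomorphism because the $\mathcal{F}$-closedness condition makes every $\theta_{I,M}^C$ (and hence every transition map $\lambda_{J,I}$) an isomorphism. The paper states this in one line; you have simply spelled out the standard filtered-colimit argument that justifies the last step.
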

\begin{proof}
By \ref{Flibreclode} we conclude that $\pi_{M}=1$, $M=M/t(M)$ and $\varphi_{M}=\varphi_{\frac{M}{t(M)}}$. Now by the diagram $3$, and the definition of $\mathcal{F}$-closed we have that $\varphi_{M}$ is an isomorphism. Proving that $\Delta_{M}$ is an isomorphism.
\end{proof}

\begin{proposition}\label{igualrest}
Let $\mathcal{F}=\{\mathcal{F}_{C}\}_{C\in \mathcal{C}}$ be a left Gabriel filter.  Let $I,J\in \mathcal{F}_{C}$ with $\mu_{I,J}:I\longrightarrow J$ the inclusion and $M$ a torsion-free module (that is, $t(M)=0$). Let $f,g:J\longrightarrow M$ be a morphism such that $f\circ \mu_{I,J}=g\circ \mu_{I,j}$, then $f=g$.
\end{proposition}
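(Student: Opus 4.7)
The plan is to reduce to showing that a single morphism $h:=f-g\colon J\to M$ vanishes, given that $h\circ\mu_{I,J}=0$ and $t(M)=0$. The idea is that $h$ factors through the quotient $J/I$, which will turn out to be an $\mathcal{F}$-torsion module, and so its image in the torsion-free $M$ must be zero.

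First I would set $h:=f-g\colon J\to M$ and observe that by linearity $h\circ\mu_{I,J}=0$, so $I\subseteq\mathrm{Ker}(h)$. By the universal property of the cokernel, $h$ factors as $J\xrightarrow{\pi} J/I\xrightarrow{\bar h} M$, and proving $f=g$ reduces to proving $\bar h=0$.

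Next I would show that $J/I\in\mathcal{T}_{\mathcal{F}}$. Since $I\in\mathcal{F}_{C}$, by the bijection of Theorem \ref{biyeGafilterpretor} together with the description in \ref{filterpretorclas}(b), the module $\mathrm{Hom}_{\mathcal{C}}(C,-)/I$ lies in the hereditary torsion class $\mathcal{T}_{\mathcal{F}}$. The inclusion $\delta_{J}\colon J\hookrightarrow\mathrm{Hom}_{\mathcal{C}}(C,-)$ induces a monomorphism $J/I\hookrightarrow \mathrm{Hom}_{\mathcal{C}}(C,-)/I$ (by the third isomorphism theorem in the Grothendieck category $\mathrm{Mod}(\mathcal{C})$, where $I\subseteq J\subseteq \mathrm{Hom}_{\mathcal{C}}(C,-)$). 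Since $\mathcal{T}_{\mathcal{F}}$ is hereditary, $J/I\in\mathcal{T}_{\mathcal{F}}$.

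Finally, I would look at $\mathrm{Im}(\bar h)\subseteq M$. Because $\mathcal{T}_{\mathcal{F}}$ is a torsion class, it is closed under quotients, so $\mathrm{Im}(\bar h)$, as a quotient of $J/I$, also lies in $\mathcal{T}_{\mathcal{F}}$. But then $\mathrm{Im}(\bar h)$ is a submodule of $M$ lying in $\mathcal{T}_{\mathcal{F}}$, so it is contained in $t(M)=\sum_{N\in\mathcal{T}_{\mathcal{F}},\,N\subseteq M}N=0$. Hence $\bar h=0$, giving $h=0$ and $f=g$. There is no real obstacle here; the only point that needs care is verifying the inclusion $J/I\hookrightarrow \mathrm{Hom}_{\mathcal{C}}(C,-)/I$ levelwise in $\mathbf{Ab}$, but this is immediate because cokernels in $\mathrm{Mod}(\mathcal{C})$ are computed pointwise.
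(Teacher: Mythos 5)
Your argument is correct and follows the same overall skeleton as the paper's: set $h=f-g$, factor it through $J/I$, show $J/I\in\mathcal{T}_{\mathcal{F}}$, and conclude because there are no nonzero maps from a torsion module into the torsion-free $M$ (equivalently, because $t(M)=0$). The one place where you diverge is the verification that $J/I$ is $\mathcal{F}$-torsion. The paper does this elementwise: for $\overline{w}\in (J/I)(C')$ it computes $\mathrm{Ann}(\overline{w},-)=\bigl(I(C,-):w\bigr)$ and then invokes axiom $T_3$ of the linear filter to put this annihilator in $\mathcal{F}_{C'}$. You instead observe that $I\in\mathcal{F}_C$ is equivalent, under the bijection of Theorem \ref{biyeGafilterpretor} together with Definition \ref{filterpretorclas}(b), to $\mathrm{Hom}_{\mathcal{C}}(C,-)/I\in\mathcal{T}_{\mathcal{F}}$, and then use the monomorphism $J/I\hookrightarrow \mathrm{Hom}_{\mathcal{C}}(C,-)/I$ plus heredity of $\mathcal{T}_{\mathcal{F}}$. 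This is a cleaner packaging of the same fact: your route shifts the work to the already-established filter/torsion-class correspondence rather than re-deriving the annihilator condition by hand, at the small cost of invoking the bijection theorem where the paper uses only the filter axioms directly. Both are valid, and yours reads more smoothly. One minor remark: the paper's parenthetical ``equivalently $t(J/I)=0$'' is a typo for $t(J/I)=J/I$; your argument does not inherit this slip.
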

\begin{proof}
Consider the exact sequence
$$\xymatrix{0\ar[r] & I(C,-)\ar[r]^{\mu_{I,J}} & J(C,-)\ar[r]^{\pi} & \frac{J(C,-)}{I(C,-)}\ar[r] & 0}$$
Since $(f-g)\circ \mu_{I,J}=0$, there exists $\theta:\frac{J(C,-)}{I(C,-)}\longrightarrow M$ such that $f-g=\theta\pi$.\\
Let $\overline{w}\in \frac{J(C,C')}{I(C,C')}$ be with $w\in J(C,C')\subseteq 	\mathrm{Hom}_{\mathcal{C}}(C,C')$, then we have that
$\mathrm{Ann}(\overline{w},-)$ is a left ideal of $\mathrm{Hom}_{\mathcal{C}}(C',-)$ and we easy get that
\begin{align*}
\mathrm{Ann}(\overline{w},-)(X)= \{f\in \mathrm{Hom}_{\mathcal{C}}(C',X)\mid f w\in I(C,X)\}
\end{align*}
Then $\mathrm{Ann}(\overline{w},-)=\Big(I(C,-):w\Big)$. By $T_{3}$ we have that $\mathrm{Ann}(\overline{w},-)\in \mathcal{F}_{C'}$. Thus, we get that $\frac{J(C,-)}{I(C,-)}\in \mathcal{T}_{\mathcal{F}}$ (that is $\frac{J(C,-)}{I(C,-)}$ is a $\mathcal{F}$-torsion module).
Since $\frac{J(C,-)}{I(C,-)}$ is a $\mathcal{F}$-torsion module (equivalently $t(\frac{J(C,-)}{I(C,-)})=0$) and $M$ is a torsion free module, we have that $\theta:\frac{J(C,-)}{I(C,-)}\longrightarrow M$ is the morphism zero and then we have that $f-g=\theta\pi=0$. Proving that $f=g$.
\end{proof}

\begin{proposition}\label{Lrestrictorsion}
Let $\mathcal{F}=\{\mathcal{F}_{C}\}_{C\in \mathcal{C}}$ be a left Gabriel filter and $M\in \mathrm{Mod}(\mathcal{C})$ and $t$ the radical associated to $\mathcal{F}$. If $t(M)=0$, then $t(\mathbb{L}(M))=0$.
\end{proposition}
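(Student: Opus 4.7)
The plan is to show that every element of $t(\mathbb{L}(M))$ must vanish. Fix $C \in \mathcal{C}$ and $w \in t(\mathbb{L}(M))(C)$. Since $t(\mathbb{L}(M))$ is a subfunctor of $\mathbb{L}(M)$ lying in $\mathcal{T}_{\mathcal{F}}$, the annihilator $I := \mathrm{Ann}(w,-)$, computed in $\mathbb{L}(M)$, coincides with the one computed in $t(\mathbb{L}(M))$ and hence belongs to $\mathcal{F}_C$. Using the description of the colimit $\mathbb{L}(M)(C)$, write $w = \varphi_{J,M}^C(\beta)$ for some $J \in \mathcal{F}_C$ and $\beta : J \to M$, and let $\psi : \mathrm{Hom}_\mathcal{C}(C,-) \to \mathbb{L}(M)$ be the natural transformation corresponding to $w$ via the Yoneda isomorphism, so that $\psi_{C'}(f) = \mathbb{L}(M)(f)(w)$ for every $f \in \mathrm{Hom}_\mathcal{C}(C,C')$.

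The definition of $I$ immediately yields $\psi \circ \delta_I = 0$. On the other hand, Proposition \ref{cuadradochido} produces the identity $\varphi_M \circ \beta = \psi \circ \delta_J$ as morphisms $J \to \mathbb{L}(M)$. The natural ideal to consider is then $K := I \cap J$, which lies in $\mathcal{F}_C$ by axiom $T_2$. Restricting both sides of the previous identity along the inclusion $\mu_{K,J} : K \to J$ and using that $\delta_J \circ \mu_{K,J} = \delta_K = \delta_I \circ \mu_{K,I}$, one obtains
$$\varphi_M \circ (\beta \circ \mu_{K,J}) = \psi \circ \delta_J \circ \mu_{K,J} = \psi \circ \delta_I \circ \mu_{K,I} = 0.$$
The hypothesis $t(M)=0$ combined with Proposition \ref{kervarphi} says $\mathrm{Ker}(\varphi_M)=t(M)=0$, so $\varphi_M$ is a monomorphism, whence $\beta \circ \mu_{K,J} = 0$. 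By the vanishing criterion in a directed colimit (Rotman's Lemma 5.30, already cited above), this forces $w = \varphi_{J,M}^C(\beta) = \varphi_{K,M}^C(\beta \circ \mu_{K,J}) = 0$. Since $C$ and $w$ were arbitrary, $t(\mathbb{L}(M)) = 0$.

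The argument is essentially a diagram chase; the only real pivot is the square of Proposition \ref{cuadradochido}, which converts the abstract colimit element $w$ into the honest morphism $\beta$ and transports the annihilator condition on $w$ into a vanishing condition on $\beta$ restricted to a sufficiently small ideal in the filter. I do not expect any step to pose technical difficulty, provided the identification of $\mathrm{Ann}(w,-)$ inside $\mathbb{L}(M)$ with the one inside $t(\mathbb{L}(M))$ (automatic, since $t(\mathbb{L}(M))$ is a subfunctor) is noted explicitly.
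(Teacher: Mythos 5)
Your proof is correct and follows essentially the same route as the paper: represent $w$ as an element $\varphi_\bullet(\beta)$ of the directed colimit, invoke Proposition \ref{cuadradochido} to relate $\varphi_M\circ\beta$ to $\psi$, pass to the intersection of $\mathrm{Ann}(w,-)$ with the domain of $\beta$ (in $\mathcal{F}_C$ by $T_2$), use that $\varphi_M$ is a monomorphism via $\mathrm{Ker}(\varphi_M)=t(M)=0$, and conclude with the directed-colimit vanishing criterion. The only difference is cosmetic: you interchange the roles of the letters $I$ and $J$ relative to the paper's notation.
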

\begin{proof}
Consider the filter $\mathcal{F}:=\{\mathcal{F}_{C}\}_{C\in \mathcal{C}}$ in $\mathcal{C}$. By \ref{biyeGafilterpretor}  we have the torsion class
$$\mathcal{T}_{\mathcal{F}}:=\left\{M\in \mathrm{Mod}(\mathcal{C})\mid  \text{for each}\,\, C\in \mathcal{C},\,\,\, \mathrm{Ann}(x,-)\in \mathcal{F}_{C}\,\,\forall x\in M(C) \right\}.$$
By \ref{biyepretorprerad}, we have a radical $t$ associated to $\mathcal{T}_{\mathcal{F}}$. Then we get that
$$t(\mathbb{L}(M))=\sum_{N\in \mathcal{T}_{\mathcal{F}},N\subseteq \mathbb{L}(M)}N.$$
Let $w\in t(\mathbb{L}(M))(C)\subseteq \mathbb{L}(M)(C)$. Then we get that  $J(C,-):=\mathrm{Ann}(w,-)\in\mathcal{F}_{C}$ (since $t(\mathbb{L}(M))\in \mathcal{T}_{\mathcal{F}}$ and the description of $\mathcal{T}_{\mathcal{F}}$ in \ref{filterpretorclas}).  We recall  that $\mathrm{Ann}(w,C'):=\{f\in \mathrm{Hom}(C,C')\mid \mathbb{L}(M)(f)(w)=0\}$.\\
We know that $w=\varphi_{I,M}^{C}(\beta)\in \mathbb{L}(M)(C)$  for some ideal $I(C,-)\in \mathcal{F}_{C}$ and $\beta:I(C,-)\longrightarrow M$. Then by \ref{cuadradochido}, we have that
$$\psi\circ \delta_{I}=\varphi_{M}\circ \beta,$$
where $\delta_{I}$ is the canonical inclusion and  $\psi$ is the natural transformation corresponding to $w=\varphi_{I,M}^{C}(\beta)\in \mathbb{L}(M)(C)$ via the Yoneda isomorphism $Y:\mathbb{L}(M)(C)\longrightarrow \mathrm{Hom}_{\mathrm{Mod}(\mathcal{C})}\Big(\mathrm{Hom}_{\mathcal{C}}(C,-),\mathbb{L}(M)\Big)$. Then we have
$$\xymatrix{J(C,-)\cap I(C,-)\ar[r]^{\epsilon_{J}}\ar[d]_{\epsilon_{I}} & J(C,-)\ar[d]^{\delta_{J}}\\
I(C,-)\ar[r]^{\delta_{I}}\ar[d]_{\beta} & \mathrm{Hom}_{\mathcal{C}}(C,-)\ar[d]^{\psi}\\
M\ar[r]_{\varphi_{M}} & \mathbb{L}(M).}$$ 
Then for $C'\in \mathcal{C}$ we have the diagram

$$\xymatrix{J(C,C')\cap I(C,C')\ar[r]^(.6){[\epsilon_{J}]_{C'}}\ar[d]_{[\epsilon_{I}]_{C'}} & J(C,C')\ar[d]^{[\delta_{J}]_{C'}}\\
I(C,C')\ar[r]^{[\delta_{I}]_{C'}}\ar[d]_{\beta_{C'}} & \mathrm{Hom}_{\mathcal{C}}(C,C')\ar[d]^{\psi_{C'}}\\
M(C')\ar[r]_{[\varphi_{M}]_{C'}} & \mathbb{L}(M)(C'),}$$
where $[\delta_{I}]_{C'}$, $[\delta_{J}]_{C'}$, $[\epsilon_{I}]_{C'}$ and $[\epsilon_{J}]_{C'}$ are the inclusions as sets. Then for $f\in J(C,C')	\cap I(C,C')$ we get that 
$$\psi_{C'}([\delta_{J}]_{C'}[\epsilon_{J}]_{C'}(f))=\psi_{C'}(f)=\mathbb{L}(M)(f)(w)=0$$
since $f\in \mathrm{Ann}(w,C')$. Therefore we obtain that
$\varphi_{M}\circ \beta\circ \epsilon_{I}=\psi\circ \delta_{J}\circ \epsilon_{J}=0$. Since $M$ is torsion free ($t(M)=\mathrm{Ker}(\varphi_{M})=0$), we have that $\varphi_{M}$ is a monomorphism an then we get that $\beta\circ \epsilon_{I}=0$. Since $I(C,-)$,  $J(C,-)\in \mathcal{F}_{C}$ we get that $I(C,-)\cap J(C,-)\in \mathcal{F}_{C}$ (by property $T_{2}$) and therefore, by lemma 5.30 (ii) in \cite{Rotman},
we have that $w=\varphi_{I,M}^{C}(\beta)=0$ in $\mathbb{L}(M)$. We conclude that $t(\mathbb{L}(M))=0$.
\end{proof}

\begin{proposition}
$\mathbb{G}(M)$ is an $\mathcal{F}$-closed module for each $M\in \mathrm{Mod}(\mathcal{C})$.
\end{proposition}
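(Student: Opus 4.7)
Set $N := M/t(M)$, so that by definition $\mathbb{G}(M) = \mathbb{L}(N)$. The plan is to verify the defining condition of $\mathcal{F}$-closedness, namely that
$$\theta^{C}_{I,\mathbb{L}(N)} : \mathrm{Hom}_{\mathrm{Mod}(\mathcal{C})}(\mathrm{Hom}_{\mathcal{C}}(C,-),\mathbb{L}(N)) \longrightarrow \mathrm{Hom}_{\mathrm{Mod}(\mathcal{C})}(I,\mathbb{L}(N))$$
is an isomorphism for every $C \in \mathcal{C}$ and every $I \in \mathcal{F}_C$. The argument will rest on three properties of $\mathbb{L}(N)$ already at hand: since $t$ is a radical one has $t(N) = 0$, so Proposition \ref{Lrestrictorsion} gives $t(\mathbb{L}(N)) = 0$, i.e.\ $\mathbb{L}(N)$ is torsion-free; Proposition \ref{kervarphi} together with $t(N)=0$ shows that $\varphi_N : N \hookrightarrow \mathbb{L}(N)$ is a monomorphism; and Proposition \ref{Cokervarphi} asserts that $\mathrm{Coker}(\varphi_N)$ is $\mathcal{F}$-torsion.

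Injectivity of $\theta^{C}_{I,\mathbb{L}(N)}$ will be immediate from Proposition \ref{igualrest}: any $f: \mathrm{Hom}_{\mathcal{C}}(C,-) \to \mathbb{L}(N)$ with $f \circ \delta_I = 0$ must be the zero morphism, since $\mathbb{L}(N)$ is torsion-free and $I \in \mathcal{F}_C$. For surjectivity, given $g : I \to \mathbb{L}(N)$, the strategy is to descend to a subideal of $I$ on which $g$ factors through $\varphi_N$. Letting $\pi: \mathbb{L}(N) \twoheadrightarrow \mathrm{Coker}(\varphi_N)$ denote the projection, I would set $K := \mathrm{Ker}(\pi\circ g) \subseteq I$. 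Then $I/K$ embeds in the $\mathcal{F}$-torsion module $\mathrm{Coker}(\varphi_N)$, hence is itself $\mathcal{F}$-torsion; combined with the fact that $\mathrm{Hom}_{\mathcal{C}}(C,-)/I$ is $\mathcal{F}$-torsion (because $I \in \mathcal{F}_C$), the short exact sequence
$$0 \longrightarrow I/K \longrightarrow \mathrm{Hom}_{\mathcal{C}}(C,-)/K \longrightarrow \mathrm{Hom}_{\mathcal{C}}(C,-)/I \longrightarrow 0$$
together with closure of $\mathcal{T}_{\mathcal{F}}$ under extensions forces $\mathrm{Hom}_{\mathcal{C}}(C,-)/K \in \mathcal{T}_{\mathcal{F}}$, so Theorem \ref{biyeGafilterpretor} places $K$ in $\mathcal{F}_C$. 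Since $g(K) \subseteq \mathrm{Ker}(\pi) = \mathrm{Im}(\varphi_N)$ and $\varphi_N$ is a monomorphism, there exists a unique $h: K \to N$ with $\varphi_N \circ h = g \circ \delta_K$.

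The final step is to glue $h$ into a global extension and check it restricts to $g$. I would set $y := \varphi^{C}_{K,N}(h) \in \mathbb{L}(N)(C)$ and let $\tilde{g}: \mathrm{Hom}_{\mathcal{C}}(C,-) \to \mathbb{L}(N)$ be the natural transformation corresponding to $y$ under the Yoneda isomorphism. Proposition \ref{cuadradochido}, applied with $N$ in place of $M$ and $h$ in place of $\beta$, then yields $\tilde{g} \circ \delta_K = \varphi_N \circ h = g \circ \delta_K$. Consequently $\tilde{g}\circ\delta_I$ and $g$ are two morphisms $I \to \mathbb{L}(N)$ that agree on the subideal $K \in \mathcal{F}_C$, and a second invocation of Proposition \ref{igualrest} (again exploiting torsion-freeness of $\mathbb{L}(N)$) forces $\tilde{g} \circ \delta_I = g$, i.e.\ $\theta^{C}_{I,\mathbb{L}(N)}(\tilde{g}) = g$, finishing surjectivity. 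The main conceptual obstacle will be identifying the auxiliary ideal $K$ and checking via closure under extensions that $K \in \mathcal{F}_C$; once that is in place, Propositions \ref{cuadradochido} and \ref{igualrest} do essentially all of the remaining work.
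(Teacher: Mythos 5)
Your proposal is correct and follows essentially the same strategy as the paper: establish that $\mathbb{L}(N)$ is torsion-free via Proposition \ref{Lrestrictorsion}, pull a given $g:I\to\mathbb{L}(N)$ back over the monomorphism $\varphi_N$ to a subideal $K$, verify $K\in\mathcal{F}_C$, lift to a morphism out of $\mathrm{Hom}_\mathcal{C}(C,-)$ via Yoneda and Proposition \ref{cuadradochido}, and conclude with Proposition \ref{igualrest}. The only (harmless) deviations are local: for injectivity you invoke \ref{igualrest} directly with $J=\mathrm{Hom}_\mathcal{C}(C,-)\in\mathcal{F}_C$, rather than the paper's argument that $[\psi_{\mathbb{L}(N)}]_C$ is mono because $\varphi_{\mathbb{L}(N)}$ is; and to place $K$ in $\mathcal{F}_C$ you use closure of $\mathcal{T}_{\mathcal{F}}$ under extensions, whereas the paper appeals to axiom $T_4$ via the identification $\mathrm{Ann}(\overline{h},-)=(K(C,-):h)$ — but these are equivalent under Theorem \ref{biyeGafilterpretor}, so the two routes are substantively identical.
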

\begin{proof}
$(1)$ Let us see that $$\xymatrix{\mathrm{Hom}_{\mathrm{Mod}(\mathcal{C})}\big(\mathrm{Hom}_{\mathcal{C}}(C,-),\mathbb{L}(\frac{M}{t(M)})\big)\ar[rr]_(.6){\theta_{J,\mathbb{L}(\frac{M}{t(M)})}^{C}} & &  \mathrm{Hom}_{\mathrm{Mod}(\mathcal{C})}\big(J,\mathbb{L}(\frac{M}{t(M)})\big),}$$
is injective for all $J=J(C,-)\in \mathcal{F}_{C}$.\\
Applying \ref{Lrestrictorsion} to $M/t(M)$, we have that $0=t(\mathbb{L}(\frac{M}{t(M)}))=\mathrm{Ker}(\varphi_{\mathbb{L}\big(\frac{M}{t(M)}\big)})$. Then we have an exact sequence
$$\xymatrix{0\ar[r] & \mathbb{L}(\frac{M}{t(M)})\ar[r]^{\varphi_{\mathbb{L}(\frac{M}{t(M)})}} & \mathbb{L}^{2}(\frac{M}{t(M)})\ar[r] & \mathrm{Coker}(\varphi_{\mathbb{L}(\frac{M}{t(M)})})\ar[r] & 0.}$$
By definition of $\varphi_{\mathbb{L}(\frac{M}{t(M)})}$ for each $C\in\mathcal{C}$ we have that
$[\varphi_{\mathbb{L}(\frac{M}{t(M)})}]_{C}:=[\psi_{\mathbb{L}(\frac{M}{t(M)})}]_{C}\circ Y_{C}$ where $Y_{C}$ is the Yoneda iso (see \ref{defimorfismovar}) and $[\psi_{\mathbb{L}(\frac{M}{t(M)})}]_{C}$ is such that the following commutes for all $J(C,-)\in\mathcal{F}_{C}$
$$\xymatrix{\mathrm{Hom}_{\mathrm{Mod}(\mathcal{C})}\big(\mathrm{Hom}_{\mathcal{C}}(C,-),\mathbb{L}(\frac{M}{t(M)})\big)\ar[rr]^{[\psi_{\mathbb{L}(\frac{M}{t(M)})}]_{C}}\ar@{=}[d] & & \varinjlim_{J\in \mathcal{F}_{C}}\mathrm{Hom}_{\mathrm{Mod}(\mathcal{C})}\big(J,\mathbb{L}(\frac{M}{t(M)})\big)\\
\mathrm{Hom}_{\mathrm{Mod}(\mathcal{C})}\big(\mathrm{Hom}_{\mathcal{C}}(C,-),\mathbb{L}(\frac{M}{t(M)})\big)\ar[rr]_(.6){\theta_{J,\mathbb{L}(\frac{M}{t(M)})}^{C}} & &  \mathrm{Hom}_{\mathrm{Mod}(\mathcal{C})}\big(J,\mathbb{L}(\frac{M}{t(M)})\big)\ar[u]_{\varphi_{J,\mathbb{L}(\frac{M}{t(M)})}^{C}}.}$$
Since $[\varphi_{\mathbb{L}(\frac{M}{t(M)})}]_{C}:=[\psi_{\mathbb{L}(\frac{M}{t(M)})}]_{C}\circ Y_{C}$ is mono and $Y_{C}$ is  iso, we conclude that $[\psi_{\mathbb{L}(\frac{M}{t(M)})}]_{C}$ is mono. By the above diagram we conclude that $\theta_{J,\mathbb{L}(\frac{M}{t(M)})}^{C}$ is mono for all $J_{C}\in \mathcal{F}_{C}$.\\
That is, we have that 

$$\xymatrix{\mathrm{Hom}_{\mathrm{Mod}(\mathcal{C})}\big(\mathrm{Hom}_{\mathcal{C}}(C,-),\mathbb{L}(\frac{M}{t(M)})\big)\ar[rr]_(.6){\theta_{J,\mathbb{L}(\frac{M}{t(M)})}^{C}} & &  \mathrm{Hom}_{\mathrm{Mod}(\mathcal{C})}\big(J,\mathbb{L}(\frac{M}{t(M)})\big),}$$
is injective for all $J=J(C,-)\in \mathcal{F}_{C}$.\\

$(2)$ Now, since $t(M/t(M))=0$ and $\mathrm{Ker}(\varphi_{\frac{M}{t(M)}})=t(M/t(M))=0$, we have the exact sequence
$$\xymatrix{0\ar[r] & \frac{M}{t(M)}\ar[r]^{\varphi_{\frac{M}{t(M)}}} & M_{\mathbb{L}}\ar[r] & \mathrm{Coker}(\varphi_{\frac{M}{t(M)}})	\ar[r] & 0}$$
Let $f:J(C,-)\longrightarrow \mathbb{G}(M)=\mathbb{L}(\frac{M}{t(M)})$ with $J(C,-)\in \mathcal{F}_{C}$. Taking the pullback, we have that
$$\xymatrix{0\ar[r] & I(C,-)\ar[r]^{\mu_{I,J}}\ar[d]^{g} & J(C,-)\ar[r]\ar[d]^{f}& 	\frac{J(C,-)}{I(C,-)}\ar[r]\ar[d]^{\theta} & 0\\
0\ar[r] & \frac{M}{t(M)}\ar[r]^{\varphi_{\frac{M}{t(M)}}} & \mathbb{L}(\frac{M}{t(M)})\ar[r] & \mathrm{Coker}(\varphi_{\frac{M}{t(M)}})	\ar[r] & 0}$$
where $\theta$ is a monomorphism.\\
We know that $\mathrm{Coker}(\varphi_{\frac{M}{t(M)}})$ is a torsion module, so we conclude that 
$\frac{J(C,-)}{I(C,-)}$ is a torsion module since $\mathcal{T}_{\mathcal{F}}$ is closed under subobjects (that is, $\frac{J(C,-)}{I(C,-)}\in \mathcal{T}_{\mathcal{F}}$). Then, we have that $\mathrm{Ann}(\overline{h},-)\in \mathcal{F}_{B}$ for every $\overline{h}\in \frac{J(C,B)}{I(C,B)}$ and for every $B\in \mathcal{C}$ (definition of $\mathcal{T}_{\mathcal{F}}$). But, $\mathrm{Ann}(\overline{h},-)=\Big(I(C,-):h\Big)$. That is, we have that $\Big(I(C,-):h\Big)\in \mathcal{F}_{B}$ for every $h\in J(C,B)$. By $T_{4}$, we conclude that
$I(C,-)\in \mathcal{F}_{C}$.
Considering the morphism $g:I(C,-)	\longrightarrow \frac{M}{t(M)}$ from the above diagram, we have an element $w:=\varphi_{I,M}^{C}(g)\in \mathbb{L}(\frac{M}{t(M)})(C)$. Then by  \ref{cuadradochido}, we have the diagram
$$\xymatrix{I(C,-)\ar[r]^{\delta_{I}}\ar[d]_{g} & \mathrm{Hom}_{\mathcal{C}}(C,-)\ar[d]^{\psi}\\
\frac{M}{t(M)}\ar[r]_{\varphi_{\frac{M}{t(M)}}} & \mathbb{L}(\frac{M}{t(M)})=\mathbb{G}(M)}$$ 
where $\psi$ corresponds to $w\in \mathbb{L}(\frac{M}{t(M)})(C)$ via Yoneda isomorphism.
By the above diagram que have that $\psi |_{I(C,-)}= \varphi_{\frac{M}{t(M)}}\circ g$. We assert that $\psi |_{J(C,-)}=\psi\circ \delta_{J}=f$. Indeed, we have that $f\circ \mu_{I,J}=\varphi_{\frac{M}{t(M)}}\circ g=\psi |_{I(C,-)}=\psi\circ \delta_{J}\circ \mu_{I,J}$. By \ref{igualrest}, we conclude that $\psi\circ \delta_{J}=f$. Proving that
$$\xymatrix{\mathrm{Hom}_{\mathrm{Mod}(\mathcal{C})}\big(\mathrm{Hom}_{\mathcal{C}}(C,-),\mathbb{L}(\frac{M}{t(M)})\big)\ar[rr]_(.6){\theta_{J,\mathbb{L}(\frac{M}{t(M)})}^{C}} & &  \mathrm{Hom}_{\mathrm{Mod}(\mathcal{C})}\big(J,\mathbb{L}(\frac{M}{t(M)})\big),}$$
is surjective for all $C\in \mathcal{C}$ and $J\in \mathcal{F}_{C}$, since $\theta_{J,\mathbb{L}(\frac{M}{t(M)})}^{C}(\psi)= \psi\circ \delta_{J}=f$.
\end{proof}

\begin{definition}
We denote by $\mathrm{Mod}(\mathcal{C},\mathcal{F})$ the full subcategory of $\mathrm{Mod}(\mathcal{C})$ consisting of $\mathcal{F}$-closed modules.
\end{definition}

\begin{proposition}
$\mathbb{G}:\mathrm{Mod}(\mathcal{C})\longrightarrow \mathrm{Mod}(\mathcal{C},\mathcal{F})$ is left adjoint to the inclusion $i:\mathrm{Mod}(\mathcal{C},\mathcal{F})\longrightarrow \mathrm{Mod}(\mathcal{C})$.
\end{proposition}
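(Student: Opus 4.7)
The plan is to exhibit, for each $M\in\mathrm{Mod}(\mathcal{C})$ and each $\mathcal{F}$-closed module $N$, a natural bijection
$$\mathrm{Hom}_{\mathrm{Mod}(\mathcal{C},\mathcal{F})}(\mathbb{G}(M),N)\longrightarrow \mathrm{Hom}_{\mathrm{Mod}(\mathcal{C})}(M,i(N)),\qquad g\longmapsto g\circ\Delta_{M},$$
with $\Delta_{M}:M\to\mathbb{G}(M)$ serving as the unit of the adjunction.

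First I would verify surjectivity. Given $f:M\to i(N)$, the fact that $\Delta$ is a natural transformation $1_{\mathrm{Mod}(\mathcal{C})}\to\mathbb{G}$ yields the commutative square $\mathbb{G}(f)\circ\Delta_{M}=\Delta_{N}\circ f$. Since $N$ is $\mathcal{F}$-closed, Corollary \ref{Closeddeltaiso} guarantees that $\Delta_{N}$ is an isomorphism, so I define $\overline{f}:=\Delta_{N}^{-1}\circ\mathbb{G}(f):\mathbb{G}(M)\to N$ and obtain $\overline{f}\circ\Delta_{M}=\Delta_{N}^{-1}\circ\Delta_{N}\circ f=f$. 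Naturality of the bijection in both variables then follows routinely from the functoriality of $\mathbb{G}$ and the naturality of $\Delta$.

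The more delicate step is uniqueness of $\overline{f}$, which I expect to be the main obstacle. Suppose $g_{1},g_{2}:\mathbb{G}(M)\to N$ satisfy $g_{1}\circ\Delta_{M}=g_{2}\circ\Delta_{M}$, and set $h:=g_{1}-g_{2}$, so $h\circ\Delta_{M}=0$. Then $h$ vanishes on $\mathrm{Im}(\Delta_{M})$ and factors as $h=\widetilde{h}\circ\pi$ through the canonical projection $\pi:\mathbb{G}(M)\to\mathrm{Coker}(\Delta_{M})$. By Corollary \ref{kerDelta}, $\mathrm{Coker}(\Delta_{M})$ is $\mathcal{F}$-torsion, while Proposition \ref{Flibreclode} gives $t(N)=0$. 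Since $t$ is a left exact radical, the torsion-free class is closed under subobjects, so the image of $\widetilde{h}$ is simultaneously a quotient of an $\mathcal{F}$-torsion module and a submodule of a torsion-free module; any such module lies in $\mathcal{T}_{\mathcal{F}}\cap\{M:t(M)=0\}=\{0\}$. Hence $\widetilde{h}=0$, so $h=0$ and $g_{1}=g_{2}$.

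The argument relies crucially on the fact that $\mathrm{Coker}(\Delta_{M})$ is $\mathcal{F}$-torsion, which in turn depends on axiom $T_{4}$ of the Gabriel filter (via Proposition \ref{Cokervarphi}); it is precisely here that the distinction between a linear filter and a Gabriel filter becomes essential, since without $T_{4}$ the cokernel of $\varphi_{M/t(M)}$ need not be $\mathcal{F}$-torsion and the uniqueness of $\overline{f}$ would fail.
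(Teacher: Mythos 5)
Your proposal is correct and follows essentially the same route as the paper: surjectivity via naturality of $\Delta$ together with $\Delta_{N}$ being an isomorphism for $\mathcal{F}$-closed $N$ (Corollary \ref{Closeddeltaiso}), and injectivity via the orthogonality $\mathrm{Hom}(\text{torsion},\text{torsion-free})=0$ applied to $\mathrm{Coker}(\Delta_{M})$ and $N$. The only cosmetic difference is that you factor the morphism $h$ through $\mathrm{Coker}(\Delta_{M})$ in one step, whereas the paper factors the $\mathrm{Hom}$ group through $\mathrm{Hom}(M/t(M),N)$ and treats $\mathrm{Hom}(t(M),N)=0$ and $\mathrm{Hom}(\mathrm{Coker}(\varphi_{M/t(M)}),N)=0$ separately; these are the same computation.
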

\begin{proof}
We have to show that there exists an isomorphism
$$\Phi_{M,N}:\mathrm{Hom}_{\mathrm{Mod}(\mathcal{C},\mathcal{F})}(\mathbb{G}(M),N)\longrightarrow \mathrm{Hom}_{\mathrm{Mod}(\mathcal{C})}(M,N)$$ for every $N\in \mathrm{Mod}(\mathcal{C},\mathcal{F})$ and $M\in \mathrm{Mod}(\mathcal{C})$.
We have $\varphi_{\frac{M}{t(M)}}:\frac{M}{t(M)}\longrightarrow \mathbb{L}(\frac{M}{t(M)})=\mathbb{G}(M)$ and $\pi_{M}:M\longrightarrow \frac{M}{t(M)}$ the projection. Given $\alpha:\mathbb{G}(M)\longrightarrow N$ we define $\Phi_{M,N}(\alpha):=\alpha\circ \varphi_{\frac{M}{t(M)}}\circ \pi_{M}=\alpha\circ \Delta_{M}$.\\ 
Now, given $f:M\longrightarrow N$ we have the following commutative diagram
$$\xymatrix{M\ar[r]^{f}\ar[d]^{\Delta_{M}} & N\ar[d]^{\Delta_{N}}\\
\mathbb{G}(M)\ar[r]^{\mathbb{G}(f)} & \mathbb{G}(N).}$$
Since $N\in \mathrm{Mod}(\mathcal{C},\mathcal{F})$, we have that $\Delta_{N}$ is an isomorphism (see \ref{Closeddeltaiso}). So we define
$\alpha:=\Delta_{N}^{-1}\circ \mathbb{G}$ and we have that
$\Phi_{M,N}(\alpha)=\alpha\circ \Delta_{M}= \Delta_{N}^{-1}\circ \mathbb{G}\circ \Delta_{M}=f$. So, we have that $\Phi_{M,N}$ is surjective.\\
Now, consider the exact sequence
$$\xymatrix{0\ar[r] & \frac{M}{t(M)}\ar[r]^{\varphi_{\frac{M}{t(M)}}} & \mathbb{L}(\frac{M}{t(M)})\ar[r] & \mathrm{Coker}(\varphi_{\frac{M}{t(M)}})=Z\ar[r] & 0}$$
Applying $\mathrm{Hom}_{\mathrm{Mod}(\mathcal{C})}(-,N)$ we have the exact sequence
$$\xymatrix{\mathrm{Hom}_{\mathrm{Mod}(\mathcal{C})}(Z,N)\ar[r] & \mathrm{Hom}_{\mathrm{Mod}(\mathcal{C})}(\mathbb{G}(M),N)\ar[r] & 
\mathrm{Hom}_{\mathrm{Mod}(\mathcal{C})}(\frac{M}{t(M)},N)}$$
Since $Z$ is torsion module and $N$ is a torsion-free module (see \ref{kervarphi} and \ref{Flibreclode}), we have that $\mathrm{Hom}_{\mathrm{Mod}(\mathcal{C})}(Z,N)=0$. Then we conclude that 
$$\xymatrix{\mathrm{Hom}_{\mathrm{Mod}(\mathcal{C})}(\mathbb{G}(M),N)\ar[rrr]^{\mathrm{Hom}_{\mathrm{Mod}(\mathcal{C})}(\varphi_{\frac{M}{t(M)}},N)} & & &
\mathrm{Hom}_{\mathrm{Mod}(\mathcal{C})}(\frac{M}{t(M)},N)}$$
is injective. Now, we assert that
$$\mathrm{Hom}_{\mathrm{Mod}(\mathcal{C})}\Big(\pi_{M},N\Big):\mathrm{Hom}_{\mathrm{Mod}(\mathcal{C})}\Big(\frac{M}{t(M)},N\Big)\longrightarrow \mathrm{Hom}_{\mathrm{Mod}(\mathcal{C})}\Big(M,N\Big)$$ is an isomorphism.
This follows from the following exact sequence
$$\xymatrix{0\ar[r] & \Big(\frac{M}{t(M)},N\Big)\ar[r] &\Big(M,N\Big)\ar[r] & \Big(t(M),N\Big)=0}$$
where $\Big(t(M),N\Big)=0$ because $t(M)$ is torsion and $N$ is torsion-free. 
This implies that 
$$\xymatrix{\mathrm{Hom}_{\mathrm{Mod}(\mathcal{C},\mathcal{F})}(\mathbb{G}(M),N)\ar[rrr]^{\mathrm{Hom}_{\mathrm{Mod}(\mathcal{C})}(\Delta_{M},N)} & & &
\mathrm{Hom}_{\mathrm{Mod}(\mathcal{C})}(M,N)}$$
is injective. But $\Phi_{M,N}=\mathrm{Hom}_{\mathrm{Mod}(\mathcal{C})}(\Delta_{M},N)$. Proving that $\Phi_{M,N}$ is bijective.
\end{proof}

We  recall the following notion. Let $\mathcal{X}\subseteq \mathrm{Mod}(\mathcal{C})$ a class of objects and $f:M\longrightarrow X$ a morphism with $X\in \mathcal{X}$, it is said that $f$ is a left $\mathcal{X}$-approximation of $M$ if for every morphism $g:M\longrightarrow X'$ with $X'\in \mathcal{X}$ there exists a morphism $h:X\longrightarrow X'$ such that $g=h\circ f$. If every object in $\mathrm{Mod}(\mathcal{C})$ admits a left $\mathcal{X}$-approximation we say that $\mathcal{X}$ is covariantly finite in $\mathrm{Mod}(\mathcal{C})$.

\begin{corollary}
The morphism $\Delta_{M}:M\longrightarrow \mathbb{G}(M)$ is a left $\mathrm{Mod}(\mathcal{C},\mathcal{F})$-approximation of $M$. In particular $\mathrm{Mod}(\mathcal{C},\mathcal{F})$ is a covariantly finite subcategory of $\mathrm{Mod}(\mathcal{C})$.
\end{corollary}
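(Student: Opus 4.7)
The plan is to deduce this corollary almost immediately from the adjunction $\mathbb{G} \dashv i$ just established, observing that the map $\Delta_{M}$ plays the role of the unit of this adjunction. Recall that the isomorphism
\[
\Phi_{M,N} : \mathrm{Hom}_{\mathrm{Mod}(\mathcal{C},\mathcal{F})}(\mathbb{G}(M),N) \longrightarrow \mathrm{Hom}_{\mathrm{Mod}(\mathcal{C})}(M,N)
\]
is defined by precomposition with $\Delta_{M}$, i.e. $\Phi_{M,N}(\alpha) = \alpha \circ \Delta_{M}$. This is precisely the statement that $\Delta_{M}$ satisfies the universal property characterising left approximations.

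Concretely, given any $X \in \mathrm{Mod}(\mathcal{C},\mathcal{F})$ and any morphism $g : M \longrightarrow X$ in $\mathrm{Mod}(\mathcal{C})$, the surjectivity (indeed bijectivity) of $\Phi_{M,X}$ yields a morphism $h : \mathbb{G}(M) \longrightarrow X$ in $\mathrm{Mod}(\mathcal{C},\mathcal{F})$ such that $g = \Phi_{M,X}(h) = h \circ \Delta_{M}$. This is exactly the defining condition for $\Delta_{M}$ to be a left $\mathrm{Mod}(\mathcal{C},\mathcal{F})$-approximation of $M$. Explicitly, one may take $h := \Delta_{X}^{-1} \circ \mathbb{G}(g)$, as was done in the proof of the preceding proposition, where $\Delta_{X}$ is an isomorphism by \ref{Closeddeltaiso} since $X$ is $\mathcal{F}$-closed.

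Since $\mathbb{G}(M) \in \mathrm{Mod}(\mathcal{C},\mathcal{F})$ for every $M \in \mathrm{Mod}(\mathcal{C})$ (as established just before the definition of $\mathrm{Mod}(\mathcal{C},\mathcal{F})$), the morphism $\Delta_{M} : M \longrightarrow \mathbb{G}(M)$ is a left $\mathrm{Mod}(\mathcal{C},\mathcal{F})$-approximation available for \emph{every} $M$. By definition this means that $\mathrm{Mod}(\mathcal{C},\mathcal{F})$ is a covariantly finite subcategory of $\mathrm{Mod}(\mathcal{C})$, proving the second assertion.

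There is really no main obstacle here: the hard work has been done in establishing the adjunction and the $\mathcal{F}$-closedness of $\mathbb{G}(M)$. The only subtlety worth mentioning is the minor bookkeeping that $\Phi_{M,N}$ is literally precomposition with $\Delta_{M}$, so that solving $\Phi_{M,X}(h) = g$ for $h$ is identical to factoring $g$ through $\Delta_{M}$; this identification is what turns the abstract adjunction into the concrete approximation property.
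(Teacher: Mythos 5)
Your proposal is correct and follows exactly the route the paper intends: the paper's proof is the one-liner ``This follows by the previous propositions, since $\mathbb{G}(M)$ is closed,'' and you have simply unpacked that, correctly identifying that the surjectivity of $\Phi_{M,X}$ (precomposition with $\Delta_M$) is literally the left-approximation property, and that $\mathcal{F}$-closedness of $\mathbb{G}(M)$ gives covariant finiteness.
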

\begin{proof}
This follows by the previous propositions, since $\mathbb{G}(M)$ is closed.
\end{proof}

\begin{corollary}\label{Gexact}
The functor $\mathbb{G}$ is exact.
\end{corollary}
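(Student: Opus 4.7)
The plan is to assemble exactness of $\mathbb{G}$ from two complementary halves: right exactness from the adjunction, and left exactness from the identification with $\mathbb{L}\circ\mathbb{L}$.

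First, the preceding proposition shows that $\mathbb{G}$ is left adjoint to the inclusion $i:\mathrm{Mod}(\mathcal{C},\mathcal{F})\longrightarrow \mathrm{Mod}(\mathcal{C})$. As a left adjoint, $\mathbb{G}$ preserves all colimits; in particular, viewed as a functor $\mathrm{Mod}(\mathcal{C})\to \mathrm{Mod}(\mathcal{C})$ (by composing with the fully faithful $i$), it is right exact. Thus only left exactness remains to be verified.

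For left exactness, I would invoke Proposition \ref{isoLL}, which exhibits an isomorphism $\mathbb{G}(M)\simeq \mathbb{L}\mathbb{L}(M)$ via the map $\mathbb{L}(\gamma_{M})$, where $\gamma_{M}:M/t(M)\hookrightarrow \mathbb{L}(M)$ is the image factorization of $\varphi_{M}:M\to \mathbb{L}(M)$. Since $t$ is a subfunctor of the identity, the projection $\pi_{M}:M\to M/t(M)$ is natural in $M$, and naturality of $\varphi:1_{\mathrm{Mod}(\mathcal{C})}\to \mathbb{L}$ together with the universal property of the image factorization forces $\gamma_{M}$ to be natural as well. Applying the functor $\mathbb{L}$, the family $\{\mathbb{L}(\gamma_{M})\}_{M}$ assembles into a natural isomorphism $\mathbb{G}\cong \mathbb{L}\circ \mathbb{L}$. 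Since $\mathbb{L}$ was proved to be left exact earlier in Section \ref{sec3}, the composite $\mathbb{L}\circ \mathbb{L}$ is left exact, and hence so is $\mathbb{G}$. Combining with the right exactness obtained above gives the claim.

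The step I expect to require the most care is checking the naturality of $\mathbb{L}(\gamma_{M})$ in $M$, since the proof of \ref{isoLL} only establishes a pointwise isomorphism for each $M$. Concretely, for every morphism $f:M\to N$ one must check that the induced quotient morphism $\overline{f}:M/t(M)\to N/t(N)$ fits into a commutative square with $\gamma_{M}$, $\gamma_{N}$, $\varphi_{M}$, $\varphi_{N}$ and $\mathbb{L}(f)$; this is a short diagram chase from naturality of $\varphi$ plus the uniqueness of the image factorization. Once naturality is in place, the corollary is a formal consequence of two previously established facts, namely the adjunction and left exactness of $\mathbb{L}$.
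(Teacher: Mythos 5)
Your decomposition into right exactness (from the adjunction) and left exactness (from the identification with $\mathbb{L}\circ\mathbb{L}$) is exactly the approach the paper takes, and your attention to naturality of $\mathbb{L}(\gamma_{M})$ is a welcome sharpening of a point the paper leaves implicit. However, there is a real slip in the right-exactness half. You write that since $\mathbb{G}$ is a left adjoint it preserves colimits, and then assert that, ``viewed as a functor $\mathrm{Mod}(\mathcal{C})\to\mathrm{Mod}(\mathcal{C})$ (by composing with the fully faithful $i$), it is right exact.'' That inference is not valid: the left adjoint $\mathbb{G}$ preserves colimits \emph{computed in} $\mathrm{Mod}(\mathcal{C},\mathcal{F})$, and the inclusion $i$ is a \emph{right} adjoint, so it preserves limits but not cokernels. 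Composing with $i$ therefore destroys right exactness, and indeed the endofunctor $i\circ\mathbb{G}$ of $\mathrm{Mod}(\mathcal{C})$ is in general only left exact (this is already the situation for Gabriel localization of a ring).

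The correct reading of the corollary --- and the one the paper uses --- is that $\mathbb{G}:\mathrm{Mod}(\mathcal{C})\to\mathrm{Mod}(\mathcal{C},\mathcal{F})$ is exact with respect to the abelian structure of the Giraud subcategory (where cokernels are computed by applying the reflector to the cokernel in $\mathrm{Mod}(\mathcal{C})$). With that reading, right exactness follows immediately from $\mathbb{G}$ being a left adjoint, without any ``viewed as an endofunctor'' step; and left exactness follows because $i$ preserves and reflects kernels, so $\mathbb{G}$ preserves kernels iff $i\circ\mathbb{G}\simeq\mathbb{L}\mathbb{L}$ does, which holds since $\mathbb{L}$ is left exact. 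So your overall strategy is right and matches the paper, but you should drop the parenthetical claim about $i\circ\mathbb{G}$ and keep track throughout of which abelian structure you are working in.
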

\begin{proof}
The functor $\mathbb{G}$ is left exact since $\mathbb{L}$ is left exact and by \ref{otherdescLL}(b). Now $\mathbb{G}$ is left exact since $\mathbb{G}$ is left adjoint to $i$, see \cite[Theorem 2.6.1]{Weibel}.
\end{proof}

\begin{proposition}\label{KerG}
Let $\mathcal{F}=\{\mathcal{F}_{C}\}_{C\in \mathcal{C}}$ be a left Gabriel filter and $M\in \mathrm{Mod}(\mathcal{C})$. Then $\mathbb{G}(M)=0$ if and only if $M$ is an $\mathcal{F}$-torsion module. That is  $\mathbb{G}(M)=0$ if and only if $t(M)=M$, where $t$ is the radical associated to $\mathcal{F}$.
\end{proposition}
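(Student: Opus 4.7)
The plan is to reduce this statement directly to Proposition \ref{L=0siMtor} together with the closure properties of the hereditary torsion class $\mathcal{T}_{\mathcal{F}}$ and basic facts about the radical $t$.

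For the ``if'' direction, I would assume $M$ is $\mathcal{F}$-torsion, i.e.\ $t(M)=M$. Then $M/t(M)=0$, and so $q_{t}(M)=0$. Since $\mathrm{Hom}_{\mathrm{Mod}(\mathcal{C})}(I,0)=0$ for every left ideal $I$, the directed colimit defining $\mathbb{L}(0)$ is $0$, hence $\mathbb{G}(M)=\mathbb{L}(M/t(M))=\mathbb{L}(0)=0$. Alternatively, one may invoke Proposition \ref{L=0siMtor} applied to the zero module, which is trivially $\mathcal{F}$-torsion.

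For the ``only if'' direction, suppose $\mathbb{G}(M)=\mathbb{L}(M/t(M))=0$. By Proposition \ref{L=0siMtor}, this forces $M/t(M)$ to be an $\mathcal{F}$-torsion module, i.e.\ $M/t(M)\in \mathcal{T}_{\mathcal{F}}$. Now consider the canonical short exact sequence
\[
0\longrightarrow t(M)\longrightarrow M\longrightarrow M/t(M)\longrightarrow 0.
\]
By Proposition \ref{biyepretorprerad} together with Theorem \ref{biyeGafilterpretor}, the radical $t$ is the one associated with the hereditary torsion class $\mathcal{T}_{\mathcal{F}}$, and by construction $t(M)=\sum_{N\in \mathcal{T}_{\mathcal{F}},\,N\subseteq M} N \in \mathcal{T}_{\mathcal{F}}$ (the sum of subobjects in a hereditary torsion class lies again in the class). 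Since $\mathcal{T}_{\mathcal{F}}$ is closed under extensions, both end terms of the above sequence being in $\mathcal{T}_{\mathcal{F}}$ yields $M\in \mathcal{T}_{\mathcal{F}}$, that is, $t(M)=M$ by Remark \ref{remtorsiorad}.

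The proof is essentially a formal consequence of the previously established machinery; no real obstacle arises. The only subtle point to keep straight is that one must use the fact that $\mathcal{T}_{\mathcal{F}}$ is a genuine hereditary \emph{torsion} class (closure under extensions, coming from $\mathcal{F}$ being a \emph{Gabriel} filter via Theorem \ref{biyeGafilterpretor}), rather than only a hereditary pretorsion class; without closure under extensions, one could not conclude $M\in \mathcal{T}_{\mathcal{F}}$ from having both $t(M)$ and $M/t(M)$ in $\mathcal{T}_{\mathcal{F}}$.
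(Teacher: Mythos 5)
Your proof is correct. Both directions work, and the reduction of the ``only if'' direction to Proposition \ref{L=0siMtor} applied to $M/t(M)$ is exactly the right move and is also what the paper does. Where you diverge is in the final step: having concluded that $M/t(M)\in\mathcal{T}_{\mathcal{F}}$, you close the argument by placing $M$ in the short exact sequence $0\to t(M)\to M\to M/t(M)\to 0$ with both end terms in $\mathcal{T}_{\mathcal{F}}$ and invoking extension-closure of the torsion class. The paper instead notes that $M/t(M)\in\mathcal{T}_{\mathcal{F}}$ means $t(M/t(M))=M/t(M)$, while the defining property of a radical gives $t(M/t(M))=0$, so $M/t(M)=0$ directly, hence $t(M)=M$. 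The two routes rest on the same underlying fact (that $\mathcal{F}$ being a Gabriel rather than merely a linear filter makes $\mathcal{T}_{\mathcal{F}}$ a torsion class and $t$ a radical, via Theorem \ref{biyeGafilterpretor} and Proposition \ref{biyepretorprerad}), and you are right to flag explicitly that extension-closure is the crucial input; the paper's version is a bit shorter since it avoids appealing to the exact sequence, but there is no mathematical difference in strength or generality.
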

\begin{proof}
Let $t:\mathrm{Mod}(\mathcal{C})\longrightarrow \mathrm{Mod}(\mathcal{C})$ the radical associated to $\mathcal{F}$. 
If $M$ is $\mathcal{F}$-torsion module  we have that $t(M)=M$. Then
$\mathbb{G}(M)=\mathbb{L}(\frac{M}{t(M)})=0$.\\
Now, if $\mathbb{G}(M)=\mathbb{L}(\frac{M}{t(M)})=0$ by \ref{L=0siMtor} we have that $\frac{M}{t(M)}$ is $\mathcal{F}$-torsion module. That is $t(\frac{M}{t(M)})=\frac{M}{t(M)}$ but since $t$ is radical we have that $t(\frac{M}{t(M)})=0$. Then we have that $t(M)=M$ and therefore $M$ is an $\mathcal{F}$-torsion module. 
\end{proof}

\begin{definition}
Let $\mathcal{A}$ be a complete Grothendieck category an let $\mathcal{B}$ a full subcategory of $\mathcal{A}$. 
\begin{enumerate}
\item [(a)] $\mathcal{B}$ is reflective subcategory of $\mathcal{A}$ if the inclusion functor $i:\mathcal{B}\longrightarrow \mathcal{A}$ has a left adjoint $a:\mathcal{A}\longrightarrow \mathcal{B}$.

\item [(b)] $\mathcal{B}$ is a Giraud subcategory of $\mathcal{A}$ if $\mathcal{B}$ is reflective such that the funtor  $a:\mathcal{A}\longrightarrow \mathcal{B}$ preserve kerneles.
\end{enumerate}
\end{definition}
It is well known that if $\mathcal{B}$ a Giraud subcategory of $\mathcal{A}$, then the functor $a:\mathcal{A}\longrightarrow \mathcal{B}$ is exact an $\mathcal{B}$ is a Grothendieck category.\\
Given the adjoint pair $(a,i)$, we get the unit and counit morphisms
$$\eta: 1_{\mathcal{A}}\longrightarrow i\circ a,\quad \epsilon: a\circ i\longrightarrow 1_{\mathcal{B}}.$$
We have the following well known result. For convenience of the reader we include a proof.
\begin{proposition}\label{giarutorsion}
Let $\mathcal{A}$ be a complete Grothendieck category, $\mathcal{B}$ a Giraud subcategory of $\mathcal{A}$ and $a:\mathcal{A}\longrightarrow \mathcal{B}$ the left adjoint to the inclusion functor. We set $\mathcal{T}_{\mathcal{B}}:=\{A\in \mathcal{A}\mid a(A)=0\}$ and $\mathcal{F}_{\mathcal{B}}:=\{A\in \mathcal{A}\mid \eta_{A}:A\longrightarrow i\circ a(A)\,\,\text{is monomorphism}\}.$
Then $(\mathcal{T}_{\mathcal{B}},\mathcal{F}_{\mathcal{B}})$ is a hereditary torsion theory for $\mathcal{A}$.
\end{proposition}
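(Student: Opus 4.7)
The plan is to exhibit the torsion radical explicitly as $t(A):=\mathrm{Ker}(\eta_{A})$ and then verify the torsion theory axioms through the adjunction. I will structure the proof around four steps: (i) closure properties of $\mathcal{T}_{\mathcal{B}}$, (ii) $t(A)\in\mathcal{T}_{\mathcal{B}}$, (iii) $A/t(A)\in\mathcal{F}_{\mathcal{B}}$, (iv) the $\mathrm{Hom}$-vanishing plus maximality.

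First I will observe that because $a$ is exact (as noted above the proposition) and because $i$ is fully faithful (so the counit $\epsilon$ is an isomorphism), the kernel $\mathcal{T}_{\mathcal{B}}=\{A\mid a(A)=0\}$ is closed under subobjects, quotients, extensions, and coproducts. The first three are immediate from exactness of $a$ applied to short exact sequences; coproducts are preserved by $a$ since it is a left adjoint. In particular closure under subobjects gives hereditariness, so this step dispatches that requirement in advance.

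Next, for each $A\in\mathcal{A}$ I will let $t(A):=\mathrm{Ker}(\eta_{A})$ and factor the unit as $\eta_{A}:A\twoheadrightarrow A/t(A)\xhookrightarrow{\iota}ia(A)$. The triangle identity $\epsilon_{a(A)}\circ a(\eta_{A})=1_{a(A)}$ together with $\epsilon$ being an isomorphism shows $a(\eta_{A})$ is an isomorphism. Applying the exact functor $a$ to the inclusion $t(A)\hookrightarrow A\xrightarrow{\eta_{A}}ia(A)$ then forces $a(t(A))=0$, i.e.\ $t(A)\in\mathcal{T}_{\mathcal{B}}$. For $A/t(A)\in\mathcal{F}_{\mathcal{B}}$ I will use naturality of $\eta$ along the projection $\pi\colon A\to A/t(A)$: the identity $\eta_{A/t(A)}\circ\pi=ia(\pi)\circ\eta_{A}=ia(\pi)\circ\iota\circ\pi$ combined with the epi $\pi$ yields $\eta_{A/t(A)}=ia(\pi)\circ\iota$; since $a(\pi)$ is an iso (both $a(t(A))=0$ and $a$ exact) and $\iota$ is mono, the composite is mono.

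Finally I will check $\mathrm{Hom}_{\mathcal{A}}(T,F)=0$ for $T\in\mathcal{T}_{\mathcal{B}}$ and $F\in\mathcal{F}_{\mathcal{B}}$: any $f\colon T\to F$ satisfies $\eta_{F}\circ f=ia(f)\circ\eta_{T}=0$ because $a(T)=0$, and $\eta_{F}$ mono forces $f=0$. The short exact sequence $0\to t(A)\to A\to A/t(A)\to 0$ with $t(A)\in\mathcal{T}_{\mathcal{B}}$ and $A/t(A)\in\mathcal{F}_{\mathcal{B}}$ then gives the maximality clauses: if $\mathrm{Hom}(A,F)=0$ for all $F\in\mathcal{F}_{\mathcal{B}}$ then the quotient $A\to A/t(A)$ vanishes and $A=t(A)\in\mathcal{T}_{\mathcal{B}}$; symmetrically, if $\mathrm{Hom}(T,A)=0$ for all $T\in\mathcal{T}_{\mathcal{B}}$ then $t(A)\hookrightarrow A$ is zero, so $\eta_{A}$ is mono and $A\in\mathcal{F}_{\mathcal{B}}$. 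Combined with the closure under subobjects noted in step~(i), $(\mathcal{T}_{\mathcal{B}},\mathcal{F}_{\mathcal{B}})$ is a hereditary torsion theory.

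I do not anticipate a genuine obstacle: the whole proof rests on exactness of $a$, the fact that $\epsilon$ is invertible (hence $a(\eta)$ is invertible), and naturality of $\eta$. The one spot that requires care is verifying $\eta_{A/t(A)}$ is mono, where the argument must avoid conflating $\eta_{A/t(A)}$ with the inclusion $\iota$ into $ia(A)$; the naturality square along $\pi$ clarifies this and is the key diagrammatic step.
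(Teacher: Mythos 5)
Your proof is correct, and the key technical ingredients coincide with the paper's: exactness of $a$, the triangle identity $\epsilon_{a(A)}\circ a(\eta_{A})=1_{a(A)}$ combined with invertibility of $\epsilon$ to conclude that $a(\eta_{A})$ is an isomorphism, and hence $a(\mathrm{Ker}(\eta_{A}))=0$. The organization, however, is genuinely different. The paper first shows $\mathcal{T}_{\mathcal{B}}$ is a hereditary torsion class (closure under subobjects, quotients, extensions from exactness of $a$; closure under coproducts from $a$ being a left adjoint), then invokes the abstract fact that a hereditary torsion class in a Grothendieck category has an associated torsion-free class $\mathcal{X}$, and proves $\mathcal{X}=\mathcal{F}_{\mathcal{B}}$ by two inclusions. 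You instead exhibit the torsion radical explicitly as $t(A):=\mathrm{Ker}(\eta_{A})$, show $t(A)\in\mathcal{T}_{\mathcal{B}}$ and $A/t(A)\in\mathcal{F}_{\mathcal{B}}$ (your step (iii) — the naturality argument along $\pi$ showing $\eta_{A/t(A)}=ia(\pi)\circ\iota$ is mono — has no counterpart in the paper), and then verify the $\mathrm{Hom}$-orthogonality and both maximality clauses directly. What your route buys you is self-containment: you never need the general theorem that hereditary pretorsion classes in Grothendieck categories are torsion classes with complementary torsion-free classes, since you produce the torsion/torsion-free decomposition by hand. The paper's version is shorter on the page precisely because it outsources that step. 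Both are valid, and yours would in fact work in a general abelian category without the Grothendieck hypothesis.
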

\begin{proof}
From the exactness of $a$ follows that $\mathcal{T}_{\mathcal{B}}$ is closed under subobjects quotients and extensions. Now, since $a$ is left adjoint to the inclsuion $i$ we have that $\mathcal{T}_{\mathcal{B}}$ is closed under coproducts. Therefore $\mathcal{T}_{\mathcal{B}}$ is a hereditary torsion class. Now, let $\mathcal{X}$ be the torsion free class associated to $\mathcal{T}_{\mathcal{B}}$. Let us see that $\mathcal{X}=\mathcal{F}_{\mathcal{B}}$. Let $X\in \mathcal{X}$, then
$\mathrm{Hom}_{\mathcal{A}}(T,X)=0$ for all $T\in \mathcal{T}_{\mathcal{B}}$. For $X$ the unit of the adjunction, give us the exact sequence
$$\xymatrix{0\ar[r]  & K\ar[r]^{j} & X\ar[r]^{\eta_{X}} & (i\circ a)(X)}$$
Since $a$ is exact we have the following exact sequence
$$\xymatrix{0\ar[r]  & a(K)\ar[r]^{j} & a(X)\ar[r]^(.4){a(\eta_{X})} & a((i\circ a)(X))}$$
Now, by the triangular identities (see \cite[Theorem 3.1.5]{Borceux1}) we have that $\epsilon_{a(X)}\circ a(\eta_{X})=1_{a(X)}$.  Then $a(\eta_{X})$ is a monomorfismo and thus we have that $a(K)=0$. Therefore we have that $K\in \mathcal{T}_{\mathcal{E}}$. Then we have that $\mathrm{Hom}_{\mathcal{A}}(K,X)=0$ and we conclude that $\eta_{X}$ is a monomorphism, proving that $X\in \mathcal{F}_{\mathcal{B}}$. Conversely, first for $X\in \mathcal{B}$ and $T\in \mathcal{T}_{\mathcal{B}}$ we have that
$0=\mathrm{Hom}_{\mathcal{A}}(a(T), X)=\mathrm{Hom}_{\mathcal{A}}(T,X)$. Now, suppose that $X\in \mathcal{F}_{\mathcal{B}}$ and let 
$\alpha:T\longrightarrow X$. Then we have $\eta_{X}\circ \alpha=0$ because $a(X)\in \mathcal{B}$, and since since $\eta_{X}$ is a monomorphism we have that $\alpha=0$. Thus, we have proved that
$\mathrm{Hom}_{\mathcal{A}}(T,X)=0$  for all $T\in \mathcal{T}_{F}$. Proving that $\mathcal{F}_{\mathcal{B}}=\mathcal{X}$.
\end{proof}

\begin{proposition}
The category $\mathrm{Mod}(\mathcal{C},\mathcal{F})$ is a Giraud subcategory of $\mathrm{Mod}(\mathcal{C})$
\end{proposition}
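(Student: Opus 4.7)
The proof is essentially an assembly of results already established in the preceding sequence of propositions. The plan is to verify the two clauses of the definition of a Giraud subcategory: reflectiveness and preservation of kernels by the reflector.

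First, I would observe that the previous proposition established that $\mathbb{G}:\mathrm{Mod}(\mathcal{C})\longrightarrow \mathrm{Mod}(\mathcal{C},\mathcal{F})$ is a left adjoint to the inclusion functor $i:\mathrm{Mod}(\mathcal{C},\mathcal{F})\longrightarrow \mathrm{Mod}(\mathcal{C})$. This immediately shows that $\mathrm{Mod}(\mathcal{C},\mathcal{F})$ is a reflective subcategory of $\mathrm{Mod}(\mathcal{C})$, settling condition (a) of the definition.

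Next, for condition (b), I need the left adjoint $\mathbb{G}$ to preserve kernels, i.e.\ to be left exact. This is exactly the content of Corollary \ref{Gexact}, which asserts that $\mathbb{G}$ is exact (in fact, both left and right exact: left exactness comes from the left exactness of $\mathbb{L}$ together with the description of $\mathbb{G}$ in Remark \ref{otherdescLL}(b), and right exactness from the general fact that a left adjoint is right exact). In particular, $\mathbb{G}$ preserves kernels.

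Combining these two observations gives the conclusion: $\mathrm{Mod}(\mathcal{C},\mathcal{F})$ is reflective and its reflector $\mathbb{G}$ preserves kernels, hence $\mathrm{Mod}(\mathcal{C},\mathcal{F})$ is a Giraud subcategory of $\mathrm{Mod}(\mathcal{C})$. There is no real obstacle here since all the hard work—constructing $\mathbb{G}$, showing it lands in $\mathrm{Mod}(\mathcal{C},\mathcal{F})$, exhibiting the adjunction via the unit $\Delta_M$, and establishing exactness—was done in the preceding propositions. The proof will therefore be a short citation of \ref{Gexact} and the adjunction proposition.
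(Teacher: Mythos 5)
Your proposal is correct and follows essentially the same route as the paper: the paper's proof is the one-line citation of Corollary \ref{Gexact} (with the reflectiveness coming from the immediately preceding adjunction proposition, as you spell out). You have merely unpacked the implicit steps.
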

\begin{proof}
This follows from \ref{Gexact}.
\end{proof}

\begin{corollary}
$\mathrm{Mod}(\mathcal{C},\mathcal{F})$ is a Grothendiek abelian category.
\end{corollary}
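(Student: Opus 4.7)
The plan is to verify the two defining conditions of a Giraud subcategory directly, both of which have already been extracted from earlier work in this section. Recall the definition: a full subcategory $\mathcal{B}$ of a complete Grothendieck category $\mathcal{A}$ is Giraud when (i) the inclusion $i:\mathcal{B}\hookrightarrow \mathcal{A}$ admits a left adjoint $a$, and (ii) that left adjoint preserves kernels.

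For condition (i), I would invoke the preceding proposition, which establishes an adjunction
\[
\Phi_{M,N}:\mathrm{Hom}_{\mathrm{Mod}(\mathcal{C},\mathcal{F})}(\mathbb{G}(M),N)\xrightarrow{\sim} \mathrm{Hom}_{\mathrm{Mod}(\mathcal{C})}(M,iN)
\]
natural in $M\in\mathrm{Mod}(\mathcal{C})$ and $N\in\mathrm{Mod}(\mathcal{C},\mathcal{F})$. Thus $\mathbb{G}$ is a left adjoint to the inclusion $i:\mathrm{Mod}(\mathcal{C},\mathcal{F})\longrightarrow\mathrm{Mod}(\mathcal{C})$, so $\mathrm{Mod}(\mathcal{C},\mathcal{F})$ is a reflective subcategory of the (complete Grothendieck) category $\mathrm{Mod}(\mathcal{C})$.

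For condition (ii), I would cite Corollary \ref{Gexact}, which states that $\mathbb{G}$ is exact. In particular $\mathbb{G}$ is left exact, hence it preserves kernels. Combining this with the reflectivity of (i), $\mathrm{Mod}(\mathcal{C},\mathcal{F})$ satisfies both axioms in the definition of a Giraud subcategory. I do not foresee any obstacle here: the entire work of the section has gone into constructing $\mathbb{G}$, proving the adjunction, and proving exactness; the present statement is essentially the bookkeeping conclusion that packages those facts into the Giraud terminology. One line of actual proof suffices, namely: ``By the previous proposition, $\mathbb{G}$ is left adjoint to the inclusion $i$, and by \ref{Gexact}, $\mathbb{G}$ is exact; hence $\mathrm{Mod}(\mathcal{C},\mathcal{F})$ is a Giraud subcategory of $\mathrm{Mod}(\mathcal{C})$.''
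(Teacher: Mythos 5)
Your argument establishes that $\mathrm{Mod}(\mathcal{C},\mathcal{F})$ is a Giraud subcategory of $\mathrm{Mod}(\mathcal{C})$, which is precisely the content of the \emph{proposition} that precedes the corollary --- not the corollary itself. The corollary asserts something different, namely that $\mathrm{Mod}(\mathcal{C},\mathcal{F})$ is a Grothendieck abelian category, and your last line, ``hence $\mathrm{Mod}(\mathcal{C},\mathcal{F})$ is a Giraud subcategory of $\mathrm{Mod}(\mathcal{C})$,'' stops one step short of that claim.

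The missing step is the bridge from ``Giraud subcategory'' to ``Grothendieck category.'' The paper states it (without proof) in the remark immediately after the definition of Giraud subcategory: if $\mathcal{B}$ is a Giraud subcategory of a complete Grothendieck category $\mathcal{A}$, then the left adjoint $a$ is exact and $\mathcal{B}$ is itself a Grothendieck category. This is a non-trivial structural fact (one has to verify, e.g., that $\mathcal{B}$ is abelian with exact filtered colimits and a generator, using the adjunction and the exactness of $a$), and it is exactly what the corollary is exploiting. Your proof would be complete with one more sentence: since $\mathrm{Mod}(\mathcal{C})$ is a complete Grothendieck category and $\mathrm{Mod}(\mathcal{C},\mathcal{F})$ has just been shown to be a Giraud subcategory of it, the cited well-known fact yields that $\mathrm{Mod}(\mathcal{C},\mathcal{F})$ is a Grothendieck abelian category.
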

We have the main result of this paper which is a generelization of a classical result given by P. Gabriel in his doctoral thesis (see \cite[Lemme 1]{Gabriel1} on page 41)

\begin{theorem}\label{Biyeccionbuena}
There exists a bijective correspondence between Gabriel filters on $\mathcal{C}$ and the class of isomorphisms of Giraud subcategories of $\mathrm{Mod}(\mathcal{C})$.
\end{theorem}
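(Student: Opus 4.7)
The plan is to assemble the bijection from two halves already essentially present in the paper: the bijection between Gabriel filters and hereditary torsion classes (Theorem \ref{biyeGafilterpretor}), and a bijection between hereditary torsion classes and Giraud subcategories of $\mathrm{Mod}(\mathcal{C})$ up to equivalence (Proposition \ref{giarutorsion} in one direction, and the construction $\mathcal{F}\mapsto \mathrm{Mod}(\mathcal{C},\mathcal{F})$ in the other). Concretely, I define
\[
\Phi:\mathcal{F}\longmapsto \mathrm{Mod}(\mathcal{C},\mathcal{F}),\qquad
\Psi:\mathcal{B}\longmapsto \mathcal{F}_{\mathcal{T}_{\mathcal{B}}},
\]
where $\mathcal{T}_{\mathcal{B}}=\{M\in\mathrm{Mod}(\mathcal{C}) : a(M)=0\}$ is the torsion class attached to the Giraud subcategory $\mathcal{B}$ with reflector $a$, and $\mathcal{F}_{\mathcal{T}_{\mathcal{B}}}$ is the Gabriel filter produced by Theorem \ref{biyeGafilterpretor}. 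That $\Phi(\mathcal{F})$ is a Giraud subcategory was proved in the previous propositions (the inclusion has $\mathbb{G}$ as an exact left adjoint by Corollary \ref{Gexact}); that $\Psi(\mathcal{B})$ is a Gabriel filter is Proposition \ref{giarutorsion} combined with Theorem \ref{biyeGafilterpretor}.

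First I would verify $\Psi\circ\Phi=\mathrm{id}$. Starting from a Gabriel filter $\mathcal{F}$, the Giraud subcategory is $\mathrm{Mod}(\mathcal{C},\mathcal{F})$ with reflector $\mathbb{G}$, and Proposition \ref{KerG} identifies its kernel with $\mathcal{T}_{\mathcal{F}}$. Applying the inverse direction of Theorem \ref{biyeGafilterpretor} to $\mathcal{T}_{\mathcal{F}}$ returns $\mathcal{F}$, so the composition is the identity on the nose.

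The main work is the other composition $\Phi\circ\Psi\simeq\mathrm{id}$: given a Giraud subcategory $\mathcal{B}\subseteq\mathrm{Mod}(\mathcal{C})$ with adjoint pair $(a,i)$, put $\mathcal{T}:=\mathcal{T}_{\mathcal{B}}$ and $\mathcal{F}:=\mathcal{F}_{\mathcal{T}}$, and show $\mathcal{B}\simeq \mathrm{Mod}(\mathcal{C},\mathcal{F})$ as subcategories of $\mathrm{Mod}(\mathcal{C})$ (up to equivalence, which is what the statement requires). I would argue in two steps. For every $X\in\mathcal{B}$ and every $I\in\mathcal{F}_{C}$ the quotient $\mathrm{Hom}_{\mathcal{C}}(C,-)/I$ lies in $\mathcal{T}$, so $X$ is torsion-free with respect to $\mathcal{T}$ (Proposition \ref{giarutorsion}). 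The exactness of $a$ combined with the triangular identities gives that $X$ is in fact $\mathcal{F}$-closed: applying $\mathrm{Hom}_{\mathrm{Mod}(\mathcal{C})}(-,X)$ to $0\to I\to \mathrm{Hom}_{\mathcal{C}}(C,-)\to \mathrm{Hom}_{\mathcal{C}}(C,-)/I\to 0$ yields the injectivity of $\theta^{C}_{I,X}$ from $\mathrm{Hom}(\mathcal{T},X)=0$, and surjectivity follows from the vanishing of $\mathrm{Ext}^{1}(\mathrm{Hom}_{\mathcal{C}}(C,-)/I,X)$ which is built into the fact that the inclusion $i$ has an exact left adjoint. Conversely, for $M\in \mathrm{Mod}(\mathcal{C},\mathcal{F})$, the unit $\eta_{M}:M\to ia(M)$ has kernel in $\mathcal{T}$ and cokernel in $\mathcal{T}$ (by the general theory of Giraud reflectors), so Proposition \ref{Flibreclode} and the closedness of $M$ force $\eta_M$ to be an isomorphism, placing $M$ inside $\mathcal{B}$ up to isomorphism.

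The hard point, as indicated above, is the $\mathrm{Ext}^{1}$-vanishing needed for surjectivity of $\theta^{C}_{I,X}$ on objects of $\mathcal{B}$; once available, the rest is essentially bookkeeping with the universal properties of the two adjoint pairs. A clean way to package the argument is to observe that both $a$ and $\mathbb{G}$ are exact reflectors with kernel $\mathcal{T}$, so by the uniqueness of the Serre quotient $\mathrm{Mod}(\mathcal{C})/\mathcal{T}$ the composition $\mathbb{G}\circ i:\mathcal{B}\to\mathrm{Mod}(\mathcal{C},\mathcal{F})$ and $a\circ i':\mathrm{Mod}(\mathcal{C},\mathcal{F})\to\mathcal{B}$ are mutually inverse equivalences; this avoids verifying the $\mathrm{Ext}$-vanishing directly and reduces the problem to the exactness of the two reflectors, which is already in hand.
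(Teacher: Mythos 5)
Your overall plan is sound and the two halves you assemble — the $\mathcal{F}\leftrightarrow\mathcal{T}$ bijection and the $\mathcal{T}\leftrightarrow\mathcal{B}$ correspondence — are exactly the building blocks the paper uses; the direction $\Psi\circ\Phi=\mathrm{id}$ is handled identically via Proposition~\ref{KerG}. The genuine divergence is in $\Phi\circ\Psi\simeq\mathrm{id}$. Your first attempt, trying to show directly that every $X\in\mathcal{B}$ is $\mathcal{F}$-closed, does stall on surjectivity of $\theta^{C}_{I,X}$: that $\mathrm{Ext}^{1}(\mathrm{Hom}_{\mathcal{C}}(C,-)/I,X)$ vanishes for $X\in\mathcal{B}$ is \emph{not} an immediate formal consequence of the inclusion having an exact left adjoint, and you would also need to worry, in the converse direction, about why a monomorphism $\eta_M\colon M\hookrightarrow ia(M)$ with torsion cokernel between an $\mathcal{F}$-closed $M$ and a possibly-not-yet-known-to-be-closed $ia(M)$ is an isomorphism — you correctly flag the former but the latter is also a gap. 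Your fallback, invoking the uniqueness of the Serre quotient $\mathrm{Mod}(\mathcal{C})/\mathcal{T}$ to identify $\mathcal{B}$ and $\mathrm{Mod}(\mathcal{C},\mathcal{F})$ as two realizations of the same quotient, is valid and is genuinely different from the paper's route. The paper instead does the bookkeeping by hand: it applies the exact functors $a'$ and $\mathbb{G}$ to the four-term exact sequences of $\Delta_M$ and of the unit $\eta'_M$ (Corollary~\ref{kerDelta} and the triangular identity $\epsilon_{a'(M)}\circ a'(\eta'_M)=1$), deduces $a'\circ i\circ\mathbb{G}\simeq a'$ and $\mathbb{G}\circ i'\circ a'\simeq\mathbb{G}$, and then composes with the isomorphic counits to get mutually inverse equivalences $\mathbb{G}\circ i'$ and $a'\circ i$. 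Your Serre-quotient packaging is conceptually cleaner and shorter but outsources the work to \cite[Theorem~4.9]{Popescu} (which the paper itself only uses later, in Proposition~\ref{qutirn}); the paper's direct argument is more self-contained and does not presuppose the universality theorem for abelian quotient categories. If you adopt the Serre-quotient route, make sure to state explicitly that $\mathrm{Ker}(\mathbb{G})=\mathcal{T}_{\mathcal{F}}=\mathcal{T}_{\mathcal{B}}=\mathrm{Ker}(a)$ (which follows from Proposition~\ref{KerG} and Theorem~\ref{biyeGafilterpretor}), since the whole argument hinges on the two reflectors having the same kernel, and cite the quotient-uniqueness result you are invoking.
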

\begin{proof}
By \ref{giarutorsion}, there is a map
$$\mathbb{T}:\{\text{Giraud subcategories of}\,\,\mathrm{Mod}(\mathcal{C})\}\longrightarrow \{\text{hereditary torsion classes of}\,\,\mathrm{Mod}(\mathcal{C})\},$$
given by $\mathbb{T}(\mathcal{B}):=\mathcal{T}_{\mathcal{B}}=\left\{M\in \mathrm{Mod}(\mathcal{C})\mid a'(M)=0\right\},$
where $a':\mathrm{Mod}(\mathcal{C})\longrightarrow \mathcal{B}$ is the left adjoint to the inclusion $i':\mathcal{B}\longrightarrow \mathrm{Mod}(\mathcal{C})$. By \ref{biyeGafilterpretor}, there is a biyective correspondence
$$\xymatrix{\{\text{Gabriel filters of}\,\,\mathcal{C}\}
\ar@<1ex>[d]^{\Psi}\\
\{\text{hereditary torsion classes of}\,\,\mathrm{Mod}(\mathcal{C})\}
\ar@<1ex>[u]^{\Psi^{-1}}.}$$
We define a correspondence
$$\Phi:\{\text{Giraud subcategories of}\,\,\mathrm{Mod}(\mathcal{C})\}\longrightarrow \{\text{Gabriel filters of}\,\,\mathcal{C}\},$$
as $\Phi:=\Psi^{-1}\circ \mathbb{T}.$\\
Then, for $\mathcal{B}$ a Giraud subcategory of $\mathrm{Mod}(\mathcal{C})$  we have  that
$\Phi(\mathcal{B}):=\mathcal{F}=\{\mathcal{F}_{C}\}_{C\in \mathcal{C}}$ where 
\begin{align*}
 \mathcal{F}_{C} & :=\left\{I\subseteq \mathrm{Hom}_{\mathcal{C}}(C,-)\mid \frac{\mathrm{Hom}_{\mathcal{C}}(C,-)}{I}\in \mathcal{T}_{\mathcal{B}}\right\}\\
 & =\left\{I\subseteq \mathrm{Hom}_{\mathcal{C}}(C,-)\mid a'\Big( \frac{\mathrm{Hom}_{\mathcal{C}}(C,-)}{I}\Big)=0\right\}.
\end{align*}
We define 
$$\Gamma:\{\text{Gabriel filters of}\,\,\mathcal{C}\}\longrightarrow \{\text{Giraud subcategories of}\,\,\mathrm{Mod}(\mathcal{C})\},$$
as $\Gamma(\mathcal{F})=\mathrm{Mod}(\mathcal{C},\mathcal{F})$.\\
Lets see that $\Phi\circ \Gamma=1$. Indeed, let $\mathcal{F}=\{\mathcal{F}_{C}\}_{C\in \mathcal{C}}$ a Gabriel filter. By \ref{biyeGafilterpretor}, $\mathcal{F}$ corresponds biyectively to
$$\mathcal{T}_{\mathcal{F}}:=\left\{M\in \mathrm{Mod}(\mathcal{C})\mid  \text{for each}\,\, C\in \mathcal{C},\,\,\, \mathrm{Ann}(x,-)\in \mathcal{F}_{C}\,\,\forall x\in M(C) \right\}$$
By \ref{remtorsiorad}, we have that $\mathcal{T}_{\mathcal{F}}=\{M\in \mathrm{Mod}(\mathcal{C})\mid t(M)=M\}$ where $t$ is the radical associated to $\mathcal{F}$.\\
On the other hand, by construction of $\Phi$ we have that Gabriel filter
$\Phi(\Gamma(\mathcal{F}))=\Phi(\mathrm{Mod}(\mathcal{C},\mathcal{F}))$ corresponds biyectively to the torsion class
$\{M\in \mathrm{Mod}(\mathcal{C})\mid \mathbb{G}(M)=0\}$. By \ref{KerG} we have that $\mathcal{T}_{\mathcal{F}}=\{M\in \mathrm{Mod}(\mathcal{C})\mid \mathbb{G}(M)=0\}$. Then we have that $\Phi(\Gamma(\mathcal{F}))=\mathcal{F}$, proving that $\Phi\circ\Gamma=1$.\\

Let $\mathcal{B}$ be a Giraud subcategory of $\mathrm{Mod}(\mathcal{C})$ and $a':\mathrm{Mod}(\mathcal{C})\longrightarrow \mathcal{B}$ the left adjoint to the inclusion $i':\mathcal{B}\longrightarrow \mathrm{Mod}(\mathcal{C})$. Then we have the torsion class $\mathbb{T}(\mathcal{B}):=\mathcal{T}_{\mathcal{B}}=\left\{M\in \mathrm{Mod}(\mathcal{C})\mid a'(M)=0\right\}.$\\
By the definition $\Phi$ and the bijective correspondences  \ref{biyeGafilterpretor} and \ref{biyepretorprerad} we have that
the torsion class $\mathcal{T}_{\mathcal{B}}$ determine a unique filter Gabriel $\Phi(\mathcal{T}_{\mathcal{B}})=\mathcal{F}=\{\mathcal{F}_{C}\}_{C\in \mathcal{C}}$ where
\begin{align*}
 \mathcal{F}_{C}=\left\{I\subseteq \mathrm{Hom}_{\mathcal{C}}(C,-)\mid a'\Big( \frac{\mathrm{Hom}_{\mathcal{C}}(C,-)}{I}\Big)=0\right\}
\end{align*}
and a unique radical $t:\mathrm{Mod}(\mathcal{C})\longrightarrow \mathrm{Mod}(\mathcal{C})$. That is we have the assignation
$$(\ast):\quad \Phi(\mathcal{T}_{\mathcal{B}})=\mathcal{F}=\{ \mathcal{F}_{C}\} \longleftrightarrow
\mathcal{T}_{\mathcal{B}} \longleftrightarrow t.$$
Now, we can construct $\mathrm{Mod}(\mathcal{C},\mathcal{F})$ and $\mathbb{G}:\mathrm{Mod}(\mathcal{C})\longrightarrow \mathrm{Mod}(\mathcal{C},\mathcal{F})$. By \ref{KerG}, we have that $\mathbb{G}(M)=0$ if and only if $t(M)=M$. But by the correspondence \ref{biyepretorprerad}, we have that $t(M)=M$ if and only if $M\in \mathcal{T}_{\mathcal{B}}$ (the torsion class associated to $t$ is  $\mathcal{T}_{\mathcal{B}}$ by ($\ast$)). Then we have that 
$$(\star):\quad\mathcal{T}_{\mathcal{B}}=\left\{M\in \mathrm{Mod}(\mathcal{C})\mid a'(M)=0\right\}=\left\{M\in \mathrm{Mod}(\mathcal{C})\mid \mathbb{G}(M)=0\right\}.$$
Now, we will show that $\mathcal{B}$ is equivalent to $\mathrm{Mod}(\mathcal{C},\mathcal{F})$. Indeed, consider the following commutative diagrama
$$\xymatrix{\mathcal{B}\ar@<1ex>[dd]^(.45){\mathbb{G}\circ i'}\ar@<1ex>[drr]^(.45){i'}  &\\
& & \mathrm{Mod}(\mathcal{C})\ar@<1ex>[ull]^(.45){a'}\ar@<1ex>[dll]^(.45){\mathbb{G}}\\
\mathrm{Mod}(\mathcal{C},\mathcal{F})\ar@<1ex>[uu]^(.45){a'\circ i}\ar@<1ex>[urr]^(.45){i}}$$
We assert that $a'\circ i\circ \mathbb{G} \simeq a'$. Indeed, by \ref{kerDelta} we have exact sequence
$$\gamma:\quad \xymatrix{0\ar[r] & t(M)\ar[r] & M\ar[r]^{\Delta_{M}} & \mathbb{G}(M)\ar[r] & \mathrm{Coker}(\varphi_{\frac{M}{t(M)}})\ar[r] & 0}$$
In particular, $\mathrm{Ker}(\Delta_{M})$ and $\mathrm{Coker}(\Delta_{M})$ are of $\mathcal{F}$-torsion (i.e, $\mathrm{Ker}(\Delta_{M}),\mathrm{Coker}(\Delta_{M}) \in \mathcal{T}_{\mathcal{B}}$). Since $\mathcal{T}_{\mathcal{B}}=\{M\in \mathrm{Mod}(\mathcal{C})\mid a'(M)=0\}$ (see equality $(\star)$), we have that applying $a'$ to the exact sequence $\gamma$ we have an isomorphism (recall that $a'$ is exact)
$$\xymatrix{a'(M)\ar[r]^{a'(\Delta_{M})} & a'(\mathbb{G}(M))}$$
It is easy to show that this isomorphism is natural and then we have  that $a'\circ i\circ \mathbb{G} \simeq a'$. Now, consider the unit $\eta':1_{\mathrm{Mod}(\mathcal{C})}\longrightarrow i'\circ a'.$ Since $i'$ is full and faithfull by \cite[Proposition 3.4.1]{Borceux1} on page 114, we  have that the counit $\epsilon':a'\circ i'\longrightarrow 1_{\mathcal{B}}$ is an isomorphism and we have that $a'\ast\eta'$ is isomorphism. That is, for all $M\in \mathrm{Mod}(\mathcal{C})$ we get that $a'(\eta'_{M})$ is an isomorphism.
Consider the exact sequence
$$(\delta):\quad \xymatrix{0\ar[r] & \mathrm{Ker}(\eta'_{M})\ar[r] & M\ar[r]^{\eta'_{M}}\ar[r] & i'a'(M)\ar[r] & \mathrm{Coker}(\eta'_{M})\ar[r] & 0}$$
Since $a'$ is exact and  $a'(\eta'_{M})$ is an isomorphism, we have that $a'(\mathrm{Ker}(\eta'_{M}))=0$ and $a'(\mathrm{Coker}(\eta'_{M}))=0$. This implies that $\mathrm{Ker}(\eta'_{M}),\mathrm{Coker}(\eta'_{M})\in \mathcal{T}_{\mathcal{B}}$. Then  $\mathrm{Ker}(\eta'_{M}),\mathrm{Coker}(\eta'_{M})$ are of $\mathcal{F}$-torsion. Aplying  $\mathbb{G}$ to the exact sequence $(\delta)$, we have the isomorphism $\mathbb{G}(\eta'_{M}):\mathbb{G}(M)\longrightarrow \mathbb{G}\circ i'\circ a'(M)$. Then we have that
$$\mathbb{G}\simeq \mathbb{G}\circ i'\circ a'.$$
Since the counit $\epsilon:\mathbb{G}\circ i\longrightarrow \mathrm{Mod}(\mathcal{C},\mathcal{F})$ is an isomorphism (by the same reason for $\epsilon'$). We conclude that
$(\mathbb{G}\circ i')\circ (a'\circ i)=(\mathbb{G}\circ i'\circ a')\circ i\simeq \mathbb{G}\circ i\simeq 1$ where the last isomorphism is via $\epsilon$. Similarly, $(a'\circ i)\circ (\mathbb{G}\circ i')=(a'\circ i\circ \mathbb{G})\circ i'\simeq a'\circ i'\circ 1$. Proving that  $\mathcal{B}$ is equivalent to $\mathrm{Mod}(\mathcal{C},\mathcal{F})$. Then $\Gamma\circ \Phi=1$.
\end{proof}

The notion of quotient and localization of abelian categories by dense subcategories (i.e., Serre classes) was introduced by P. Gabriel in his famous  Doctoral thesis ``Des cat\'egories ab\'elienne'' \cite{Gabriel1}, and plays an important role in ring theory. This notion achieves some goal as quotients in other area of mathematics.\\
Let $\mathcal{A}$ be an abelian category. Recall that a Serre subcategory $\mathcal{B}$ of $\mathcal{A}$ is a subcategory closed under forming subobjects, quotients and extensions. In this case we can construct the quotient category $\mathcal{A}/\mathcal{B}$ of $\mathcal{A}$ with respect to $\mathcal{B}$ and a functor $Q:\mathcal{A}\longrightarrow \mathcal{A}/\mathcal{B}$ which is called the quotient functor. For basic proporties of quotient categories we refer to \cite{Popescu} and \cite{Gabriel1}. We recall the following well known result

\begin{theorem}[P. Gabriel]
Let $\mathcal{A}$ be a locally small abelian category and $\mathcal{B}$ a Serre subcategory. Consider $\Sigma=\{f\in \mathcal{A}\mid \mathrm{Ker}(f),\mathrm{Coker}(f)\in \mathcal{B}\}$.  Then there exists an abelian category $\mathcal{A}/\mathcal{B}$ and a exact functor $Q:\mathcal{A}\longrightarrow \mathcal{A}/\mathcal{B}$ such that $Q(f)$ is an isomorphism for all $f\in \Sigma$ and if $F:\mathcal{A}\longrightarrow \mathcal{D}$ is a functor satisfying that $F(f)$ is an isomorphism for all $f\in \Sigma$, then there exists a unique functor $G:\mathcal{A}/\mathcal{B}\longrightarrow \mathcal{D}$ such that $F=G\circ Q$.
\end{theorem}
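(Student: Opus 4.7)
The plan is to build $\mathcal{A}/\mathcal{B}$ by Gabriel's classical calculus of fractions. I would take the objects of $\mathcal{A}/\mathcal{B}$ to be those of $\mathcal{A}$ and set
\[
\mathrm{Hom}_{\mathcal{A}/\mathcal{B}}(X,Y):=\varinjlim_{(X',Y'')}\mathrm{Hom}_{\mathcal{A}}(X',Y/Y''),
\]
where the index runs over subobjects $X'\subseteq X$ and $Y''\subseteq Y$ with $X/X'\in\mathcal{B}$ and $Y''\in\mathcal{B}$, ordered by declaring $(X_1',Y_1'')\leq(X_2',Y_2'')$ if $X_2'\subseteq X_1'$ and $Y_1''\subseteq Y_2''$. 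The Serre axioms (closure under subobjects, quotients, and extensions) make this index directed: given two pairs, the pair $(X_1'\cap X_2',\, Y_1''+Y_2'')$ is a common upper bound, where closure under extensions applied to $0\to X/(X_1'\cap X_2')\text{-}\text{pieces}$ keeps us inside $\mathcal{B}$. Composition of $[f]\colon X\to Y$ represented by $f\colon X'\to Y/Y''$ with $[g]\colon Y\to Z$ represented by $g\colon Y'\to Z/Z''$ is defined by the pullback $X'':=f^{-1}\bigl((Y'+Y'')/Y''\bigr)\subseteq X'$; a short exact sequence argument gives $X/X''\in\mathcal{B}$, and $g\circ f|_{X''}$ lifts canonically to a map $X''\to Z/Z''$. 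One checks associativity and bilinearity on representatives. The functor $Q\colon\mathcal{A}\to\mathcal{A}/\mathcal{B}$ is the identity on objects and sends $f\colon X\to Y$ to the class of $f$ at the pair $(X,0)$.

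Next I would verify that $\mathcal{A}/\mathcal{B}$ is abelian and $Q$ is exact. The key observation is that $Q(X)=0$ iff $X\in\mathcal{B}$: if $X\in\mathcal{B}$, the identity class is killed by refining through the pair $(0,X)$, and conversely if $Q(X)=0$ then $\mathrm{id}_X$ becomes zero, which forces some $X'\subseteq X$ with $X/X'\in\mathcal{B}$ and $X'\in\mathcal{B}$, whence $X\in\mathcal{B}$ by closure under extensions. Kernels, cokernels, and biproducts in $\mathcal{A}/\mathcal{B}$ are constructed by choosing representatives and taking the corresponding constructions in $\mathcal{A}$; closure of $\mathcal{B}$ under subobjects and quotients guarantees the resulting classes satisfy the required universal properties. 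It then follows that $Q$ is exact, since a short exact sequence in $\mathcal{A}$ maps to a short exact sequence in $\mathcal{A}/\mathcal{B}$ via its canonical representatives.

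To see $Q$ inverts each $f\in\Sigma$, factor $f$ canonically as $X\twoheadrightarrow X/\mathrm{Ker}(f)\xrightarrow{\bar f}\mathrm{Im}(f)\hookrightarrow Y$: since $\mathrm{Ker}(f),\mathrm{Coker}(f)\in\mathcal{B}$, the quotient and inclusion become isomorphisms in $\mathcal{A}/\mathcal{B}$ (read off directly from the pairs $(X,\mathrm{Ker}(f))$ and $(\mathrm{Im}(f),0)$), while $\bar f$ is already an isomorphism; hence $Q(f)$ is an isomorphism. For the universal property, given $F\colon\mathcal{A}\to\mathcal{D}$ with $F(\Sigma)\subseteq\mathrm{Iso}(\mathcal{D})$, define $G\colon\mathcal{A}/\mathcal{B}\to\mathcal{D}$ by $G(X):=F(X)$ and, on a class represented by $h\colon X'\to Y/Y''$, by
\[
G([h]):=F(\pi_{Y''})^{-1}\circ F(h)\circ F(i_{X'})^{-1},
\]
where $i_{X'}\colon X'\hookrightarrow X$ and $\pi_{Y''}\colon Y\twoheadrightarrow Y/Y''$ both lie in $\Sigma$ by the Serre hypothesis. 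Uniqueness of $G$ with $F=G\circ Q$ is automatic from the formula since every morphism in $\mathcal{A}/\mathcal{B}$ is of the form $Q(h)\circ Q(i_{X'})^{-1}$ up to $Q(\pi_{Y''})^{-1}$. The main obstacle is well-definedness of $G$: one must show the expression above is independent of the chosen representative $(X',Y'')$ and respects the pullback formula defining composition in $\mathcal{A}/\mathcal{B}$. Both boil down to refining two pairs to a common upper bound and invoking the naturality of $F$ together with the fact that the comparison maps between refined representatives lie in $\Sigma$ (again by the Serre axioms), so $F$ sends them to isomorphisms that cancel in the formula for $G$.
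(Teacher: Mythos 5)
The paper offers no proof of this statement: it is recalled as Gabriel's classical theorem, with the argument delegated to the cited sources (Gabriel's thesis and Popescu's book). Your proposal reconstructs exactly that classical construction --- objects unchanged, $\mathrm{Hom}_{\mathcal{A}/\mathcal{B}}(X,Y)$ as a filtered colimit over pairs $(X',Y'')$ with $X/X',Y''\in\mathcal{B}$, $Q$ the identity on objects, $Q(X)=0$ iff $X\in\mathcal{B}$, inversion of $\Sigma$ via the canonical factorization of $f$, and the universal property through $[h]=Q(\pi_{Y''})^{-1}\circ Q(h)\circ Q(i_{X'})^{-1}$ --- so you are on the same route the paper implicitly relies on, and the outline is essentially correct.

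Two points need repair or acknowledgment. First, your composition rule is misstated: with $f\colon X'\to Y/Y''$, $g\colon Y'\to Z/Z''$ and $X''=f^{-1}\bigl((Y'+Y'')/Y''\bigr)$, the composite does \emph{not} lift to a map $X''\to Z/Z''$ in general, since $g$ need not annihilate $Y'\cap Y''$; one must enlarge the denominator to $Z''':=Z''+g(Y'\cap Y'')$, which is still in $\mathcal{B}$, and take the class of the induced map $X''\to Z/Z'''$ in the colimit. Second, the sentence asserting that kernels, cokernels and biproducts in $\mathcal{A}/\mathcal{B}$ ``are constructed by choosing representatives'' is where the real content of the classical theorem sits: proving that these candidate constructions satisfy the universal properties, and hence that $\mathcal{A}/\mathcal{B}$ is abelian and $Q$ exact, is the bulk of Gabriel's argument and is taken on faith in your sketch; likewise one should note that local smallness (plus well-poweredness, so that the index of the colimit is a set) is what makes the Hom-colimits legitimate. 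With those details filled in as in the cited sources, the proposal is a faithful account of the standard proof.
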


\begin{proposition}\label{qutirn}
There exists an equivalence of categories
$$\mathrm{Mod}(\mathcal{C},\mathcal{F})\simeq \mathrm{Mod}(\mathcal{C})/\mathcal{T}_{\mathcal{F}}.$$
\end{proposition}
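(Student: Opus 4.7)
The plan is to invoke the universal property of Gabriel's Serre quotient stated just above, applied to the localisation functor $\mathbb{G}$, and then verify that the induced functor is an equivalence. Since $\mathcal{T}_{\mathcal{F}}$ is a hereditary torsion class (Theorem \ref{biyeGafilterpretor}), it is in particular a Serre subcategory, so the quotient $\mathrm{Mod}(\mathcal{C})/\mathcal{T}_{\mathcal{F}}$ together with its exact quotient functor $Q$ exists. Write $\Sigma=\{f\mid \mathrm{Ker}(f),\mathrm{Coker}(f)\in\mathcal{T}_{\mathcal{F}}\}$ for the class of morphisms $Q$ inverts.

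First I would show that $\mathbb{G}$ also inverts every $f\in\Sigma$: by Corollary \ref{Gexact} the functor $\mathbb{G}$ is exact, and by Proposition \ref{KerG} one has $\mathrm{Ker}\,\mathbb{G}=\mathcal{T}_{\mathcal{F}}$; so applying $\mathbb{G}$ to the four-term exact sequence $0\to\mathrm{Ker}(f)\to M\to N\to\mathrm{Coker}(f)\to 0$ kills the outer terms and forces $\mathbb{G}(f)$ to be an isomorphism. The universal property of $Q$ then produces a unique functor $\overline{\mathbb{G}}:\mathrm{Mod}(\mathcal{C})/\mathcal{T}_{\mathcal{F}}\longrightarrow \mathrm{Mod}(\mathcal{C},\mathcal{F})$ with $\overline{\mathbb{G}}\circ Q=\mathbb{G}$.

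It remains to show $\overline{\mathbb{G}}$ is an equivalence. Essential surjectivity is immediate: since $i:\mathrm{Mod}(\mathcal{C},\mathcal{F})\hookrightarrow \mathrm{Mod}(\mathcal{C})$ is fully faithful and left adjoint to $\mathbb{G}$, the counit $\mathbb{G}\circ i\simeq 1$ is an iso (as already exploited in the proof of Theorem \ref{Biyeccionbuena}), so every $N\in\mathrm{Mod}(\mathcal{C},\mathcal{F})$ is isomorphic to $\overline{\mathbb{G}}(Q(i(N)))$. For full faithfulness I would use that $Q$ admits a fully faithful right adjoint $S$ (the classical section functor), whose essential image is the full subcategory of $\Sigma$-local objects of $\mathrm{Mod}(\mathcal{C})$; combining this with the claim that $\Sigma$-local coincides with $\mathcal{F}$-closed, one sees that $S$ and $i$ have the same essential image and that $\overline{\mathbb{G}}$ becomes a quasi-inverse to $Q\circ i$.

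The main obstacle is thus the equivalence ``$X$ is $\Sigma$-local $\Leftrightarrow$ $X$ is $\mathcal{F}$-closed''. One direction is direct: each inclusion $\delta_I:I\hookrightarrow \mathrm{Hom}_{\mathcal{C}}(C,-)$ with $I\in\mathcal{F}_{C}$ belongs to $\Sigma$ since its cokernel lies in $\mathcal{T}_{\mathcal{F}}$ by the very definition of $\mathcal{F}_{C}$, and $\mathrm{Hom}(\delta_I,X)=\theta_{I,X}^{C}$, so $\Sigma$-locality of $X$ forces the $\mathcal{F}$-closedness condition. For the converse, one reduces an arbitrary $f\in\Sigma$ to ideal inclusions by invoking axiom $T_4$ exactly as in the proof that $\mathbb{G}(M)$ is $\mathcal{F}$-closed. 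With this identification in hand $\overline{\mathbb{G}}$ is forced to be an equivalence, and the theorem follows.
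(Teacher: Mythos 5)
Your proposal is correct in outline but follows a genuinely different route from the paper. The paper disposes of the proposition in three lines by invoking a packaged result, namely Theorem 4.9 of Popescu's book: an exact functor with a fully faithful right adjoint exhibits its target as the Serre quotient by its kernel. With that in hand one only needs Proposition \ref{KerG} to identify $\mathrm{Ker}(\mathbb{G})$ with $\mathcal{T}_{\mathcal{F}}$. You instead rederive the content of that theorem for this particular pair $(\mathbb{G},i)$: you pass through the universal property of the Gabriel quotient to construct $\overline{\mathbb{G}}$ with $\overline{\mathbb{G}}\circ Q=\mathbb{G}$, obtain essential surjectivity from the counit isomorphism $\mathbb{G}\circ i\simeq 1$, and aim at full faithfulness by arguing that the section functor $S$ and the inclusion $i$ have the same essential image, the latter being the $\mathcal{F}$-closed modules. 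Both routes are legitimate; the paper's is terser, while yours is more self-contained and makes visible what the cited theorem is actually doing in this setting.

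A few details would need tightening if you wrote this out. You call $i$ ``left adjoint to $\mathbb{G}$'', but it is the \emph{right} adjoint; the conclusion $\mathbb{G}\circ i\simeq 1$ is nonetheless correct because the counit of a $(\mathbb{G},i)$ adjunction is an isomorphism exactly when the right adjoint is fully faithful. More substantially, the implication ``$\mathcal{F}$-closed $\Rightarrow$ $\Sigma$-local'' is only gestured at, and invoking $T_4$ directly ``as in the proof that $\mathbb{G}(M)$ is $\mathcal{F}$-closed'' would not transcribe immediately since that proof handles ideal inclusions, not arbitrary $f\in\Sigma$. A cleaner path uses what you already have: for $\mathcal{F}$-closed $X$, Corollary \ref{Closeddeltaiso} gives $X\simeq\mathbb{G}(X)$, and the adjunction yields a natural isomorphism $\mathrm{Hom}_{\mathrm{Mod}(\mathcal{C})}(M,X)\simeq\mathrm{Hom}_{\mathrm{Mod}(\mathcal{C},\mathcal{F})}(\mathbb{G}(M),\mathbb{G}(X))$, so $\mathrm{Hom}(f,X)$ is invertible for every $f\in\Sigma$ because $\mathbb{G}(f)$ is, by exactness of $\mathbb{G}$ and $\mathrm{Ker}(\mathbb{G})=\mathcal{T}_{\mathcal{F}}$. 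Finally, knowing $S$ and $i$ share an essential image does give an equivalence of the two categories; identifying that equivalence with $\overline{\mathbb{G}}$ itself would require one more check, but since the proposition only asserts the existence of an equivalence this is a cosmetic rather than a logical point.
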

\begin{proof}
We have an exact functor $\mathbb{G}:\mathrm{Mod}(\mathcal{C})\longrightarrow 	\mathrm{Mod}(\mathcal{C},\mathcal{F})$ whose right adjoint $i:\mathrm{Mod}(\mathcal{C},\mathcal{F})\longrightarrow \mathrm{Mod}(\mathcal{C})$ is full and faithfull. By \cite[Theorem 4.9]{Popescu} in page 180, we have that $\mathrm{Ker}(\mathbb{G})$ is a localizing subcategory and 
$$\mathrm{Mod}(\mathcal{C},\mathcal{F})\simeq \mathrm{Mod}(\mathcal{C})/\mathrm{Ker}(\mathbb{G}).$$
By \ref{KerG}, we conclude that $\mathrm{Ker}(\mathbb{G})=\mathcal{T}_{\mathcal{F}}$.
\end{proof}
Finally we give an example of a left Gabriel filter using the path category of a quiver.
\begin{example}\label{example}
Consider a field $K$ and the quiver
$$Q:\quad \xymatrix{\cdots  & i-1\ar@/^1pc/[r]^{\alpha_{i-1}}  & i\ar@/^1pc/[l]^{\beta_{i-1}}\ar@/^1pc/[r]^{\alpha_{i}} & i+1\ar@/^1pc/[l]^{\beta_{i}}\ar@/^1pc/[r]^{\alpha_{i+1}} & i+1\ar@/^1pc/[l]^{\beta_{i+1}} & \cdots}.$$ and $\mathcal{C}=KQ/\langle \rho\rangle$ the path category with the relations given by $\alpha_{i}\circ\alpha_{i-1}=\beta_{i}\circ \beta_{i+1}=\alpha_{i}\circ \beta_{i}=0$. Then the functor $(i,-):=\mathrm{Hom}_{KQ/\langle \rho\rangle}(i,-)$ can be thought as a representation in the category $\mathrm{Rep}(Q,\rho)$  given by 
$$\Big( (i,j), u:i\rightarrow j\Big)_{j\in Q_{0},u\in Q_{1}}.$$
Thus $(i,i):=\{f:i\longrightarrow i\}=\langle 1_{i},\beta_{i}\alpha_{i}\rangle\simeq K^{2}$, $(i,i+1)=\{f:i\longrightarrow i+1\}=\langle \alpha_{i}\rangle \simeq K$ and $(i,i-1)=\{f:i\longrightarrow i-1\}=\langle \beta_{i-1} \rangle\simeq K$. Then, the representation corresponding to $(i,-)$ is the following
$$\xymatrix{\cdots  & K\ar@/^1pc/[r]^{0}  & K^{2}\ar@/^1pc/[l]^{(1,0)}\ar@/^1pc/[r]^{(1,0)} & K\ar@/^1pc/[l]^{{\left( \begin{smallmatrix}0\\ 
1
\end{smallmatrix}\right)}}  & \cdots}.$$  Computing we have that $(i,-)$ has 7 ideals: $0$, $(i,-)$ and the given by the following representations

$(i,-)$ is the following
$$[\beta_{i}\alpha_{i}]:\xymatrix{\cdots  & 0\ar@/^1pc/[r]^{0}  & K\ar@/^1pc/[l]^{0}\ar@/^1pc/[r]^{0} & 0\ar@/^1pc/[l]^{0}  & \cdots}.$$ 

$$[\beta_{i-1}]:\xymatrix{\cdots  & K\ar@/^1pc/[r]^{0}  & 0\ar@/^1pc/[l]^{0}\ar@/^1pc/[r]^{0} & 0\ar@/^1pc/[l]^{0}  & \cdots}.$$ 

$$[\alpha_{i}]:\xymatrix{\cdots  & 0\ar@/^1pc/[r]^{0}  & K\ar@/^1pc/[l]^{0}\ar@/^1pc/[r]^{0} & K\ar@/^1pc/[l]^{1}  & \cdots}.$$ 

$$[\beta_{i-1}]\oplus [\alpha_{i}\beta_{i}]:\xymatrix{\cdots  & K\ar@/^1pc/[r]^{0}  & K\ar@/^1pc/[l]^{0}\ar@/^1pc/[r]^{0} & 0\ar@/^1pc/[l]^{0}  & \cdots}.$$ 

$$[\beta_{i-1}]\oplus [\alpha_{i}]:\xymatrix{\cdots  & K\ar@/^1pc/[r]^{0}  & K\ar@/^1pc/[l]^{0}\ar@/^1pc/[r]^{0} & K\ar@/^1pc/[l]^{1}  & \cdots}.$$ 
We get the following lattice of ideals
$$\xymatrix{  & & (i,-)\\
& [\beta_{i-1}]\oplus [\beta_{i}\alpha_{i}]\ar[ur] & & [\alpha_{i}]\oplus[\beta_{i-1}]\ar[ul]\\
[\beta_{i}\alpha_{i}]\ar[ur] & &  [\beta_{i-1}]\ar[ul]\ar[ur] & &  [\alpha_{i}]\ar[ul]}$$
Then an easy calculation shows that $\mathcal{F}=\{[\beta_{i-1}],[\alpha_{i}],[\alpha_{i}]\oplus [\beta_{i-1}], (i,-)\}$ is a left Gabriel filter.
It is easy to show in this case that $(i,-)\in \mathcal{T}_{\mathcal{F}}$ so we have that $\mathbb{G}\Big((i,-)\Big)=0$.
\end{example}

\section{Acknowledgements}
This investigation was iniciated after a series of talks that the first and the third authors gave at Universidad Aut\'onoma Metropolitana campus Iztapalapa, invited by the second author. The authors thanks project:{\it Apoyo a la Incorporaci\'on de Nuevos PTC-2019 PRODEP}; Grant ID: F-PROMEP-39/Rev-04
   SEP-23-005.


\bibliographystyle{alphadin}
\bibliography{GLFC}

\footnotesize

\vskip3mm \noindent Martin Ort\'iz Morales:\\ Facultad de Ciencias, Universidad  Aut\'onoma del Estado de M\'exico\\
T\'oluca, M\'exico.\\
{\tt mortizmo@uaemex.mx}

\vskip3mm \noindent Martha Lizbeth Shaid Sandoval Miranda:\\ Departamento de Matem\'aticas, Universidad Aut\'onoma Metropolitana Unidad Iztapalapa\\
Av. San Rafael Atlixco 186, Col. Vicentina Iztapalapa 09340, M\'exico, Ciudad de M\'exico.\\ {\tt marlisha@xanum.uam.mx, marlisha@ciencias.unam.mx}

\vskip3mm \noindent Valente Santiago Vargas:\\ Departamento de Matem\'aticas, Facultad de Ciencias, Universidad Nacional Aut\'onoma de M\'exico\\
Circuito Exterior, Ciudad Universitaria,
C.P. 04510, M\'exico, Ciudad de M\'exico.\\ {\tt valente.santiago.v@gmail.com.mx}

\end{document}